\documentclass[11pt,reqno]{amsart}

\usepackage[margin=1.15in]{geometry}
\usepackage[T1]{fontenc}
\usepackage[utf8]{inputenc}
\usepackage{lmodern}
\usepackage{microtype}

\usepackage{amsmath,amssymb,amsfonts,amsthm,mathtools}
\usepackage{mathrsfs}
\usepackage{enumitem}

\usepackage{xcolor}
\usepackage{graphicx}
\usepackage{comment}
\usepackage{tikz-cd}
\usepackage{multicol}
\usepackage[colorlinks=true,
linkcolor=blue!50!black,
citecolor=blue!50!black,
urlcolor=blue!50!black]{hyperref}
\usepackage[nameinlink,capitalize]{cleveref}

\theoremstyle{plain}
\newtheorem{theorem}{Theorem}[section]
\newtheorem{thm}[theorem]{Theorem}
\newtheorem{thmsn}[theorem]{Theorem}

\newtheorem{prop}[theorem]{Proposition}
\newtheorem{propsn}[theorem]{Proposition}
\newtheorem{lemma}[theorem]{Lemma}

\newtheorem{coro}[theorem]{Corollary}

\newtheorem{fact}[theorem]{Fact}

\theoremstyle{definition}
\newtheorem{definition}[theorem]{Definition}
\newtheorem{example}[theorem]{Example}

\theoremstyle{remark}

\newtheorem{rmk}[theorem]{Remark}

\theoremstyle{plain}
\newtheorem{mainthm}{Theorem}

\numberwithin{equation}{section}

\newcommand{\R}{\mathbb{R}}
\newcommand{\N}{\mathbb{N}}
\newcommand{\Z}{\mathbb{Z}}

\newcommand{\T}{\mathbb{T}}
\newcommand{\A}{\mathbb{A}}

\newcommand{\M}{\mathcal{M}}

\newcommand{\Leb}{\operatorname{Leb}}
\newcommand{\Per}{\operatorname{Per}}

\newcommand{\diam}{\operatorname{diam}}

\newcommand{\Id}{\operatorname{Id}}

\DeclareMathOperator{\interior}{int}
\newcommand{\proj}{\operatorname{proj}}

\newcommand{\restr}[2]{#1|_{#2}}
\newcommand{\p}{\partial}

\newcommand{\eps}{\varepsilon}

\newcommand{\qand}{\qquad}
\title[A periodic flow with high emergence]{A periodic flow with high emergence}

\author{Odylo Costa}
\address{Departamento de Matemática, Universidade Federal do Cear\'a, Fortaleza, Brazil}
\email{odylo.costa@mat.ufc.br}

\subjclass[2020]{37A35, 37C10, 37C40, 37C86, 57R30}
\keywords{Periodic flows, emergence, compact foliations, periodic orbit conjecture, Wasserstein distance}

\date{\today}

\begin{document}

\begin{abstract}
We construct a smooth nonsingular periodic flow on a compact manifold with high emergence, in sharp contrast with the low statistical complexity of periodic self-maps. The construction is based on a modification of the Epstein--Vogt counterexample to the Periodic Orbit Conjecture and on the high-emergence mechanism of Berger--Bochi.
\end{abstract}

\maketitle


\section{Introduction}

Given a continuous dynamical system on a compact metric space, a fundamental problem is to understand its statistical behavior and how complex it can be. One way to describe the statistics of a system is through its empirical measures. For a map $f\colon M\to M$, these are the probability measures
\begin{equation*}
\mathsf e_n^f(x)
=
\frac1n\sum_{j=0}^{n-1}\delta_{f^j(x)},
\end{equation*}
which record the proportion of time that the orbit of $x$ spends in each region of the phase space. When these measures converge, their limit describes the asymptotic statistics of the orbit. Thus a natural question is: at a given precision $\varepsilon>0$, how many probability measures are needed to approximate the statistical behavior of most orbits? The notion of emergence, introduced by Berger in \cite{berger2017emergence}, gives a quantitative approach to this question by measuring the growth of this number as $\varepsilon\to0$.

More precisely, if $f\colon M\to M$ is a continuous map and $\mu$ is a reference probability measure, the emergence $\mathcal E_\mu(f)(\varepsilon)$ is the smallest integer $N$ for which there exist probability measures $\mu_1,\dots,\mu_N$ such that
\begin{equation*}
\limsup_{n\to\infty}
\int_M
\min_{1\le i\le N}
\mathsf d\bigl(\mathsf e_n^f(x),\mu_i\bigr)\,d\mu(x)
\le \varepsilon,
\end{equation*}
where $\mathsf d$ is the Kantorovich--Wasserstein distance on the space of probability measures. A similar definition applies to flows by replacing the discrete empirical measures by time-averaged empirical measures.

Emergence gives a way to classify systems by the complexity of their statistical behavior. We say a system has finite emergence with respect to $\mu$ if
\begin{equation*}
        \mathcal E_\mu(f)(\varepsilon)=O(1)
\end{equation*}
as $\varepsilon\to0$, and it has at most polynomial emergence if there exists
$\alpha\ge0$ such that
\begin{equation*}
        \mathcal E_\mu(f)(\varepsilon)=O(\varepsilon^{-\alpha})
\end{equation*}
as $\varepsilon\to0$. On the other hand, one says that a system has
superpolynomial emergence if
\begin{equation*}
        \limsup_{\varepsilon\to0}
        \frac{\log \mathcal E_\mu(f)(\varepsilon)}
        {-\log\varepsilon}
        =
        +\infty.
\end{equation*}

To understand better systems with superpolynomial emergence, Berger introduced in
\cite{berger2020complexities} the notion of order of emergence:
\begin{equation*}
        \overline{\mathcal O}\mathcal E_\mu(f)
        =
        \limsup_{\varepsilon\to0}
        \frac{\log\log \mathcal E_\mu(f)(\varepsilon)}
        {-\log\varepsilon}.
\end{equation*}
If a system has at most polynomial emergence, then its order of emergence is
zero. Conversely, positive order of emergence implies superpolynomial
emergence. Thus positive order is a strong form of high statistical complexity.
In this paper, the expression high emergence will refer to this positive-order
regime.

The existence of systems with high emergence is a central theme in the recent development of the theory. The first example of high emergence was given by Berger and Bochi in \cite{bergerbochi}, where they constructed smooth conservative systems with such property. Further examples and mechanisms have since appeared in different contexts, including polynomial automorphisms of the plane \cite{berger2023emergence}, smooth dissipative systems \cite{kiriki2022emergence}, and analytic pseudo-rotations \cite{berger22,delaporte2025analytic}. 

The aim of this paper is to exhibit high emergence in a class of systems that are, from the topological point of view, as simple as possible: periodic systems. For pointwise periodic homeomorphisms this is impossible: by Montgomery's theorem \cite{montgomery}, if every point of a homeomorphism of a connected manifold is periodic, then the periods are uniformly bounded; consequently, as we will show, such a system has at most polynomial emergence. For flows, however, the situation is radically different. Counterexamples to the Periodic Orbit Conjecture show that there exist smooth nonsingular flows on compact manifolds whose orbits are all periodic but whose periods are unbounded.

Our goal is to show that this phenomenon can carry genuinely rich statistical behavior. More precisely, we modify the Epstein--Vogt counterexample to the Periodic Orbit Conjecture \cite{EV78} in order to construct a smooth nonsingular periodic flow on a compact $4$-manifold with high metric emergence. The construction proceeds by deforming the Hamiltonian base of the Epstein--Vogt model so as to insert, as factor, the high-emergence mechanism of Berger--Bochi. The resulting flow still has only periodic orbits, but the dynamics is complex from the statistical point of view.

\begin{mainthm}\label{thm:main-lower-bound}
There exist a smooth nonsingular periodic
flow $(\Phi^t)_{t\in\mathbb R}$ on a compact smooth $4$-manifold $M$ such that, for every $t\neq0$,
\begin{equation*}
\overline{\mathcal O}\mathcal E_\mathrm{Leb}(\Phi^t)\ge 2.
\end{equation*}
\end{mainthm}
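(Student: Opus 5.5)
We first recall the two ingredients. The Epstein--Vogt flow on a compact $4$-manifold is obtained as follows. One takes a family of Hamiltonian-type flows on a $2$-dimensional base, parametrized by an annulus or interval. Every orbit of the base is periodic. One then suspends or lifts to a circle-bundle, so that the resulting $4$-dimensional flow is nonsingular, has only periodic orbits, and has periods blowing up near a degenerate set. The Berger--Bochi mechanism produces high emergence by means of a family of integrable twist-like dynamics: on each invariant circle the motion is a rotation, but the invariant circles carry many statistically distinct ergodic measures. A parameter space of dimension $d$ of ``statistically separated'' invariant measures gives emergence of order $\ge d$, because one needs $\exp(\varepsilon^{-d})$ measures to $\varepsilon$-cover a Wasserstein ball in an infinite-dimensional family.

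Plan. \emph{Step 1.} Deform the Hamiltonian base of Epstein--Vogt. Choose the Hamiltonian $H$ on a surface region so that a positive-measure set $U$ is foliated by periodic level curves $\{H=c\}$. Choose the lift so that the resulting periodic orbits of $\Phi^t$ over $U$ are closed curves whose shapes, as $c$ and a second transverse parameter vary, reproduce the Berger--Bochi family of invariant measures. Concretely, the push-forward of the uniform (time) measure on the orbit through $x$ must realize a map $x\mapsto \nu_x$ whose image contains a $2$-parameter family of measures $\{\nu_{s}\}$. This family should be bi-Lipschitz-like in Wasserstein distance, with respect to a ``H\"older/infinite-dimensional'' metric, so that it is $\varepsilon$-separated in sets of cardinality $\exp(c\,\varepsilon^{-2})$. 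We keep all orbits periodic and the flow smooth and nonsingular, checking smoothness at the Epstein--Vogt gluing boundary by making the deformation flat there.

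\emph{Step 2.} Compute empirical measures. For a periodic orbit of period $T(x)$, the time-$t$ empirical measures $\mathsf e_n^{\Phi^t}(x)$ converge to the orbit measure $\nu_x$ whenever $t/T(x)$ is irrational. When $t/T(x)$ is rational with large denominator, they converge to a measure close to $\nu_x$. The period function is nonconstant on $U$, so for Lebesgue-a.e.\ $x$ the limit is exactly $\nu_x$, for every fixed $t\neq0$. Hence
\[
\liminf_n \int \min_i \mathsf d(\mathsf e_n(x),\mu_i)\,dx=\int\min_i \mathsf d(\nu_x,\mu_i)\,dx
\]
by dominated convergence. This reduces the lower bound to the quantization number of the push-forward measure $\nu_*\Leb$ on the space of probability measures.

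\emph{Step 3.} Lower bound the quantization. If the push-forward $\nu_*\Leb$ dominates a measure supported on a set $K$ with Lebesgue-like density, where $K$ has $\varepsilon$-packing number $\ge\exp(c\varepsilon^{-2})$ and the $\varepsilon$-balls have uniformly small mass $\le \exp(-c'\varepsilon^{-2})$, then $N$ measures cover at most $N\exp(-c'\varepsilon^{-2})$ of the mass within distance $\varepsilon$. Hence $\mathcal E(\varepsilon)\ge \exp(c''\varepsilon^{-2})$, which gives order $\ge2$. This is the Berger--Bochi argument, and we import it by making $K$ the image of their construction, transported via the embedding of Step 1.

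The main obstacle is Step 1. We must embed a family of orbit measures with such large (infinite-dimensional, ``$\varepsilon^{-2}$-entropy'') metric complexity into a smooth family of closed orbits parametrized by a finite-dimensional base, while keeping the Epstein--Vogt periodicity and smoothness. A finite-dimensional smooth family only gives polynomial covering numbers. The high complexity must therefore come from the infinitely many ``stages'' near the region where the periods blow up, with the measures accumulated across scales, as in Berger--Bochi. I would try to make successive Epstein--Vogt annular layers each carry a scaled copy of a finite-dimensional family. The number of parameters in each layer grows as the layer shrinks, with a measure-of-layer versus dimension trade-off tuned to yield exponent $2$.
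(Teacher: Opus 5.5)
Your proposal has a genuine gap: the step you yourself call ``the main obstacle'' is essentially the whole theorem, and you leave it as an unspecified plan. What has to be produced is a single smooth $\psi=\psi_0\varphi$ with $d\psi\neq0$ on $\interior D\setminus\{0\}$, invariant under $(x,y)\mapsto(-x,y)$ and $(x,y)\mapsto(x,-y)$, such that the Hamiltonian flow of $\psi^2/2$ on $A$ has order of emergence $2$. The symmetry is not cosmetic. The lifted orbits close up only because of the half-period property $\gamma(\lambda)=-\gamma(0)$ and the relation $\xi(t+\lambda)=RST\xi(t)$ with $(RST)^2=\Id$. The reflections are also used to show that $M$ is a compact manifold projecting onto $A$. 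Epstein--Vogt is neither a suspension (Montgomery's theorem forbids that) nor glued from pieces. The place where flatness is needed is the octagon $\{\psi=0\}$, where the periods blow up.

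The paper builds this $\psi$ in four moves. It symmetrizes the Berger--Bochi conjugacies $h_\eps$, which is possible because they are the identity near $\{\theta=0\}$, so that they commute with $\sigma_1,\sigma_2$. It rescales them into the annuli $\mathbb A_n=\T\times[2^{-n},2^{-n+1}]$ with scales $\eps_n\to0$. It transports them to $\check A$ by an equivariant symplectic action--angle map $F$. Finally it sets $\psi=\Omega\circ h\circ F^{-1}$, with $\Omega$ flat enough at $\rho=0$ to absorb the blow-up of $DF$ and $Dh$. The paper never engineers the lift: all the statistics live in the base. Since the projection $M\to A$ is a $1$-Lipschitz semi-conjugacy, the H\"older-factor proposition of Section~2 pushes the lower bound up to $M$. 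Your Steps 2--3 contain no such factor argument, so as written they would require controlling orbit measures on $M$ directly.

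Several of your heuristics are also wrong. A $2$-parameter family whose parametrization is Lipschitz, or even H\"older, in Wasserstein distance has polynomial packing numbers. So it cannot contain $\eps$-separated sets of cardinality $\exp(c\eps^{-2})$, as you later concede. Likewise, ``the number of parameters in each layer grows'' cannot happen literally, because the orbits in each layer of the base form a one-parameter family. In Berger--Bochi the exponent $2$ is the dimension of the surface. The complexity comes from the curve $\rho\mapsto(h_\eps^{-1})_*\Leb_{\T\times\{\rho\}}$ being extremely non-Lipschitz at scale $\eps$, not from many parameters. Finally, in Step 2, nonconstancy of the period does not make $\{x: t/T(x)\in\mathbb Q\}$ Lebesgue-null. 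You need a twist condition, which in the paper comes from the non-vanishing of $\Omega'$ and $\Omega''$ (the hypothesis on $\zeta$ inherited from Berger--Bochi).
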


\subsection{Organization of the paper}

The paper is organized as follows. In Section~\ref{section: emergence}, we recall the definition of emergence for self-maps, extend it to flows, and give some examples of systems with low emergence. Moreover, we recall the relation of emergence and quantization of measures, and study how emergence behaves under Lipschitz and H\"older factors. 

In Section~\ref{section: Epstein-Vogt's counterexample} we recall and generalize the Epstein--Vogt counterexample for the Periodic Orbit Conjecture: we construct a family of smooth flows on compact manifolds such that every orbit is periodic and the periods are unbounded. 

Finally, in Section~\ref{subsection: An example of foliation with positive order of emergence}, we modify the Epstein--Vogt construction in order to obtain a periodic flow with high emergence. The modification is performed on the Hamiltonian base and uses a factor of the high-emergence mechanism of Berger--Bochi.

\section*{Acknowledgements}

This work has received funding from the European Union's Horizon 2020 research
and innovation programme under the Marie Sk{\l}odowska-Curie grant agreement
No.~945332. The author was also supported by the Instituto Serrapilheira grant
``Jangada Dinâmica: Impulsionando Sistemas Dinâmicos na Região Nordeste''. The
author is grateful to Pierre Berger for his advice and guidance in the
preparation of this paper, and to C\'esar Romero Mora for helpful corrections.

\section{Emergence}\label{section: emergence}

\subsection{Emergence of self-maps} 
Given a continuous map $f\colon X\to X$ on a compact metric space $(X,d)$, we want to study the statistical behavior of the orbits of $f$. To do so, for each $x\in X$ and $n\ge 1$, we can associate to an orbit segment $(f^{i}(x))_{i=0}^{n-1}$ the $n$-th \textit{empirical measure associated to} $x$: $$\mathsf{e}_{n}^{f}(x)\coloneqq \frac{1}{n}\sum_{i=0}^{n-1}\delta_{f^{i}(x)}.$$

Fix an invariant measure $\mu$ for $f$, i.e., $f_{\ast}\mu=\mu$. By Birkhoff's Ergodic Theorem, for $\mu$-a.e. $x\in X$, the sequence $(\mathsf{e}_{n}^{f}(x))_{n}$ converges to a unique measure: $$\mathsf{e}^{f}(x)\coloneqq \lim_{n\to \infty} \mathsf{e}_{n}^{f}(x),$$ which we call the \textit{empirical measure associated to} $x$. 

The measure $\mathsf{e}^{f}(x)$ describes the statistical behavior of the orbit of $x$. The \textit{empirical function} $\mathsf{e}^{f}\colon X \to \mathcal{M}(X)$, which maps $X$ to the space of probability measures on $X$, is measurable. Moreover, $\mathsf{e}^{f}$ is $\mu$-a.e. constant if, and only if, $\mu$ is ergodic with respect to $f$. In this case, $\mathsf{e}^{f}(x)=\mu$ to $\mu$-a.e. $x\in X$. 

Let $\mathcal{M}_{f}(X)$ be the closed convex subset of probability measures on $X$ that are invariant under $f$. As a consequence of the ergodic decomposition theorem, for every $\mu\in \mathcal{M}_{f}(X)$ there exists a Borel set $X_{0}\subset X$ with full probability, i.e., $\mu(X_{0})=1$, such that for every $x\in X_{0}$, the empirical measure $\mathsf{e}^{f}(x)$ is $f$-invariant and ergodic. So, for any $\mu\in \mathcal{M}_{f}(X)$, the measure $\mathsf{e}^{f}_{\ast}(\mu)\in \mathcal{M}(\mathcal{M}(X))$ gives full weight to the set of ergodic measures $\mathcal{M}_{f}^{\mathrm{erg}}(X)\subset \mathcal{M}_{f}(X)$. The probability measure $\mathsf{e}^{f}_{\ast}(\mu)$ is called the \textit{ergodic decomposition} of $\mu$ and it describes the distribution of the statistical behaviors of the orbits.

To measure the size of the ergodic decomposition, we endow $\mathcal{M}(X)$ with a metric $\mathsf{d}$, called the Kantorovich--Wasserstein distance, defined by: \begin{equation} \label{def: katorovitch-wasserstein} \mathsf{d}(\mu,\nu)=\inf_{\pi \in \Pi(\mu,\nu)}\int_{X\times X} d(x,y) \ d\pi(x,y),\end{equation} where $\Pi(\mu,\nu)\coloneqq \left\{\pi \in \mathcal{M}(X\times X) \mid (p_{1})_{\ast}\pi=\mu \text{ and }(p_{2})_{\ast}\pi=\nu\right\}$ and $p_{i}\colon X\times X \to X$ are the projections onto the first and second coordinates. This distance induces the weak-$\star$ topology on $\mathcal{M}(X)$ (Corollary 6.13 in \cite{villani2009optimal}), which is compact. 

Equivalently, since $X$ is compact, the Kantorovich--Wasserstein distance $\mathsf{d}$ can be defined as \begin{equation} \label{def: katorovitch-wasserstein 2} \mathsf{d}(\mu,\nu)=\sup_{u\in \mathrm{Lip}_{1}} \left| \int_{X} u \ d\mu - \int_{X} u \ d\nu \right|.\end{equation}

A natural question arises when $M$ is a compact manifold and $\mu=\mathrm{Leb}$: in this setting, are there dynamics $f$ for which $\mathsf{e}^{f}_{\ast}\mathrm{Leb}$ is big, i.e., the statistical behavior of the orbits is highly complex? To state this question precisely, we recall the definition of \textbf{emergence}.

\begin{definition}[Emergence]\label{def: metric emergence}
	Let $(X,d)$ be a compact metric space endowed with a probability measure $\mu$, and let $f\colon X \to X$ be a continuous map on $X$ (not necessarily $\mu$-preserving). 
	
	The \textit{emergence} $\mathcal{E}_{\mu}(f)$ is the function that, for each scale $\eps>0$, associates the minimal number $\mathcal{E}_{\mu}(f)(\eps)=N\geq 1$ of probability measures $\mu_{1},\dots,\mu_{N}$ such that: $$\limsup_{n\to \infty}{\int \min_{1\leq i\leq N}{\mathsf{d}(\mathsf{e}_{n}^{f}(x),\mu_{i})} \ d\mu(x)}\leq \eps,$$ where $\mathsf{d}$ is the Kantorovich--Wasserstein distance defined in $(\ref{def: katorovitch-wasserstein})$. 
\end{definition}

Observe that when $\mu$ is $f$-invariant, the sequence $(\mathsf{e}_{n}^{f}(x))_{n}$ converges $\mu$-a.e. to $\mathsf{e}^{f}(x)$, and the above expression can be replaced by $$\int \min_{1\leq i\leq N}{\mathsf{d}(\mathsf{e}^{f}(x),\mu_{i})} \ d\mu(x)\leq \eps.$$

\subsection{Emergence of flows} 

For a flow $(\varphi^{t})_{t}$ on $X$, one defines the analogue of the $n$-th empirical measure as follows. For $T>0$ set
\[
\mathrm{e}^{\varphi}_{T}(x)\coloneqq \frac{1}{T}\int_{0}^{T}\delta_{\varphi^{s}(x)}\,ds.
\]
If $\mu$ is invariant (i.e.\ $(\varphi^{t})_{\ast}\mu=\mu$ for all $t\in\mathbb R$), then the limit
\[
\mathrm{e}^{\varphi}(x)\coloneqq \lim_{T\to +\infty}\frac{1}{T}\int_{0}^{T}\delta_{\varphi^{s}(x)}\,ds
\]
exists $\mu$-a.e. and is called the empirical measure associated to the flow $(\varphi^{t})_{t}$.

\begin{definition}[Emergence of a flow]\label{def: metric emergence of a flow}
	Let $(X,d)$ be a compact metric space endowed with a probability measure $\mu$, and let $(\varphi^{t})_{t}$ be a continuous flow on $X$ (not necessarily $\mu$-preserving). 
	
	The \textit{emergence} $\mathcal{E}_{\mu}(\varphi)$ is the function that assigns to each scale $\varepsilon>0$ the minimal number $\mathcal{E}_{\mu}(\varphi)(\varepsilon)=N\ge 1$ of probability measures $\mu_{1},\dots,\mu_{N}$ such that
	\begin{equation}\label{def: emergence flow limsup}
		\limsup_{T\to +\infty}\int \min_{1\le i\le N}\mathsf{d}(\mathsf{e}_{T}^{\varphi}(x),\mu_{i})\ d\mu(x)\le \varepsilon,
	\end{equation}
	where $\mathsf{d}$ is the Kantorovich--Wasserstein distance defined in \eqref{def: katorovitch-wasserstein}.
	\end{definition}

As in the discrete-time case, if $\mu$ is invariant under the flow $(\varphi^{t})_{t}$, then $\mathsf{e}_{T}^{\varphi}(x)$ converges $\mu$-a.e. to $\mathsf{e}^{\varphi}(x)$, and the condition \eqref{def: emergence flow limsup} can be replaced by
\begin{equation}\label{def: emergence flow}
	\int \min_{1\le i\le N}\mathsf{d}(\mathsf{e}^{\varphi}(x),\mu_{i})\ d\mu(x)\le \varepsilon.
\end{equation}

\begin{rmk}\label{rmk:periodic-flow-empirical-convergence}
Note that if $(\varphi^{t})_{t}$ is a continuous flow whose every orbit is
periodic, then one does not need to assume that $\mu$ is invariant in order to
replace \eqref{def: emergence flow limsup} by \eqref{def: emergence flow}.

Indeed, fix $x\in X$ and let $P=\Per(x)$. Define
\begin{equation*}
\mathsf e^\varphi(x)
=\frac{1}{P}\int_0^P \delta_{\varphi^s(x)}\,ds.
\end{equation*}
Given $T>0$, write
\begin{equation*}
T=NP+r,
\qquad
0\leq r<P.
\end{equation*}
Then
\begin{equation*}
\mathsf e_T^\varphi(x)
=\frac{NP}{T}\mathsf e^\varphi(x)
+
\frac{1}{T}\int_0^r\delta_{\varphi^s(x)}\,ds.
\end{equation*}
Since
\begin{equation*}
\mathsf e^\varphi(x)
=
\frac{NP}{T}\mathsf e^\varphi(x)
+
\frac{r}{T}\mathsf e^\varphi(x),
\end{equation*}
and since the diameter of $\mathcal{M}(X)$ is bounded by $\diam(X)$,
we can estimate
\begin{align*}
\mathsf d(\mathsf e_T^\varphi(x),\mathsf e^\varphi(x))
&=
\mathsf d\left(
\frac{NP}{T}\mathsf e^\varphi(x)
+
\frac{1}{T}\int_0^r\delta_{\varphi^s(x)}\,ds,
\frac{NP}{T}\mathsf e^\varphi(x)
+
\frac{r}{T}\mathsf e^\varphi(x)
\right) \\
&\leq
\frac{r}{T}\operatorname{diam}(X)
\leq
\frac{P}{T}\operatorname{diam}(X).
\end{align*}

Therefore
\begin{equation*}
\mathsf e_T^\varphi(x)\longrightarrow \mathsf e^\varphi(x)
\qquad\text{for every }x\in X.
\end{equation*}

Now fix probability measures $\mu_1,\dots,\mu_N$ on $X$. Then
\begin{equation*}
\min_{1\leq i\leq N}
\mathsf d(\mathsf e_T^\varphi(x),\mu_i)
\longrightarrow
\min_{1\leq i\leq N}
\mathsf d(\mathsf e^\varphi(x),\mu_i)
\end{equation*}
for every $x\in X$. Since these functions are bounded by
$\operatorname{diam}(X)$, dominated convergence gives
\begin{equation*}
\lim_{T\to+\infty}
\int_X
\min_{1\leq i\leq N}
\mathsf d(\mathsf e_T^\varphi(x),\mu_i)\,d\mu(x)
=\int_X
\min_{1\leq i\leq N}
\mathsf d(\mathsf e^\varphi(x),\mu_i)\,d\mu(x).
\end{equation*}
Thus, in the periodic case, emergence can be computed using the measures $\mathsf e^\varphi(x)$ supported on the orbit, even if the reference measure $\mu$ is not
invariant.
\end{rmk}

\subsection{Superpolynomial emergence}

In the search for systems with very complex statistical behavior and to better understand them, Berger introduced in \cite{berger2020complexities} the \emph{order of emergence} 
\[
\overline{\mathcal{O}}\mathcal{E}_{\mu}(g)=\limsup_{\varepsilon\to 0}\frac{\log\log \mathcal{E}_{\mu}(g)(\varepsilon)}{-\log \varepsilon},
\]
for $g$ a continuous self-map or a flow on $(X,\mu)$. 

Another complexity concept that will be useful to define is:

\begin{definition}[Polynomial Emergence] \label{def: polynomial emergence}
	We say a system $(g,\mu)$ has \textbf{at most polynomial emergence} if there exists $\alpha \geq 0$ such that $$\mathcal{E}_{\mu}(g)(\eps)=\mathcal{O}( \eps^{-\alpha})$$ when $\eps \to 0$. 
\end{definition}

In particular, if a system has polynomial emergence, then the order of emergence will be zero. More precisely, a system $(g,\mu)$ has at most polynomial emergence if and only if
	\[
	\limsup_{\varepsilon\to 0}\frac{\log \mathcal{E}_{\mu}(g)(\varepsilon)}{-\log\varepsilon}<\infty.
	\]

\subsection{Some examples of systems with low emergence}
\label{subsection: examples of low emergence}

In this subsection we record some elementary examples of systems with low
emergence. Throughout, $M$ is a closed Riemannian manifold of dimension $k$.
We denote by $d$ the Riemannian distance on $M$, and we fix
the normalized Riemannian volume, denoted by $\mathrm{Leb}$. We denote by
$\mathsf d$ the Kantorovich--Wasserstein distance on $\mathcal M(M)$.

We start with the simplest systems from the point of view of emergence:
ergodic ones.

\begin{example}[Ergodic systems]\label{ex:ergodic-systems}
Let $f:M\to M$ preserve $\mathrm{Leb}$, and assume that $\mathrm{Leb}$ is
ergodic. Then, by Birkhoff's ergodic theorem,
\begin{equation*}
\mathsf e^f(x)=\mathrm{Leb}
\qquad\text{for }\mathrm{Leb}\text{-a.e. }x.
\end{equation*}
Thus the empirical-measure map is $\mathrm{Leb}$-a.e. constant. Consequently,
\begin{equation*}
\mathcal E_{\mathrm{Leb}}(f)(\varepsilon)=1
\qquad\text{for every }\varepsilon>0.
\end{equation*}
In particular,
\begin{equation*}
\overline{\mathcal O}\mathcal E_{\mathrm{Leb}}(f)=0.
\end{equation*}
\end{example}

For the next examples we use the following elementary fact. Since $M$ is a
closed $k$-dimensional Riemannian manifold, there exists a constant
$C_M>0$ such that, for every sufficiently small $\varepsilon>0$, one can find
an $\varepsilon$-dense subset
\begin{equation*}
\{a_1,\dots,a_N\}\subset M
\end{equation*}
with
\begin{equation*}
N\leq C_M\varepsilon^{-k}.
\end{equation*}

Next, we consider an example with very simple dynamics but with a continuum of
distinct empirical measures.

\begin{example}\label{ex:identity-map}
Let $f=\Id_M$. Then, for every $x\in M$,
\begin{equation*}
\mathsf e^{\Id}(x)=\delta_x.
\end{equation*}
Let $\{a_1,\dots,a_N\}$ be an $\varepsilon$-dense subset of $M$ with
$N\leq C_M\varepsilon^{-k}$. For every $x\in M$, choose $a_i$ such that
\begin{equation*}
d(x,a_i)<\varepsilon.
\end{equation*}
Then
\begin{equation*}
\mathsf d(\delta_x,\delta_{a_i})=d(x,a_i)
<\varepsilon.
\end{equation*}
Therefore the family $\delta_{a_1},\dots,\delta_{a_N}$ $\varepsilon$-approximates
all empirical measures of the identity. Hence
\begin{equation*}
\mathcal E_{\mathrm{Leb}}(\Id_M)(\varepsilon)
\leq C_M\varepsilon^{-k}
\end{equation*}
for every sufficiently small $\varepsilon>0$. In particular,
\begin{equation*}
\overline{\mathcal O}\mathcal E_{\mathrm{Leb}}(\Id_M)=0.
\end{equation*}
\end{example}

\begin{example}[Periodic maps]\label{ex:periodic-systems}
Suppose that $f:M\to M$ is a continuous map and that there exists $q\geq 1$
such that
\begin{equation*}
f^q=\Id_M.
\end{equation*}
Then $f$ has at most polynomial emergence. More precisely, there exists a
constant $C>0$ such that
\begin{equation*}
\mathcal E_{\mathrm{Leb}}(f)(\varepsilon)
\leq C\,\varepsilon^{-kq}
\end{equation*}
for every sufficiently small $\varepsilon>0$. In particular,
\begin{equation*}
\overline{\mathcal O}\mathcal E_{\mathrm{Leb}}(f)=0.
\end{equation*}

Indeed, for every $x\in M$, the empirical measures of $f$ converge to
\begin{equation*}
E_f(x)
:=
\frac1q\sum_{j=0}^{q-1}\delta_{f^j(x)}.
\end{equation*}
To see this, write $n=\ell q+r$, with $0\leq r<q$. Then
\begin{align*}
\mathsf e_n^f(x)
&=
\frac1n\sum_{j=0}^{n-1}\delta_{f^j(x)} 
=\frac{\ell q}{n}
\left(
\frac1q\sum_{j=0}^{q-1}\delta_{f^j(x)}
\right)
+
\frac1n\sum_{j=0}^{r-1}\delta_{f^j(x)}.
\end{align*}
Since $\ell q/n\to 1$ and $r/n\to 0$, it follows that
\begin{equation*}
\mathsf e_n^f(x)\longrightarrow E_f(x)
\qquad\text{for every }x\in M.
\end{equation*}

Now fix $\varepsilon>0$, and let $\{a_1,\dots,a_N\}$ be an $\varepsilon$-dense
subset of $M$ with
\begin{equation*}
N\leq C_M\varepsilon^{-k}.
\end{equation*}
For each ordered $q$-tuple $(i_0,\dots,i_{q-1})$, with
$1\leq i_j\leq N$, define
\begin{equation*}
\nu_{i_0,\dots,i_{q-1}}
:=
\frac1q\sum_{j=0}^{q-1}\delta_{a_{i_j}}.
\end{equation*}
There are at most $N^q$ measures of this form.

We claim that this finite family $\varepsilon$-approximates all empirical
measures $E_f(x)$. Given $x\in M$, choose, for each $0\leq j\leq q-1$, an
index $i_j$ such that
\begin{equation*}
d(f^j(x),a_{i_j})<\varepsilon.
\end{equation*}
Then
\begin{align*}
\mathsf d\left(
E_f(x),
\nu_{i_0,\dots,i_{q-1}}
\right)
&=
\mathsf d\left(
\frac1q\sum_{j=0}^{q-1}\delta_{f^j(x)},
\frac1q\sum_{j=0}^{q-1}\delta_{a_{i_j}}
\right) \leq
\frac1q\sum_{j=0}^{q-1}
d(f^j(x),a_{i_j}) <\varepsilon.
\end{align*}
The inequality follows from the transport plan which sends the mass $1/q$ at
$f^j(x)$ to the mass $1/q$ at $a_{i_j}$.

Thus every limiting empirical measure is $\varepsilon$-close to one of the
measures $\nu_{i_0,\dots,i_{q-1}}$. Consequently,
\begin{equation*}
\mathcal E_{\mathrm{Leb}}(f)(\varepsilon)
\leq N^q
\leq C_M^q\varepsilon^{-kq}.
\end{equation*}
This proves the polynomial upper bound.
\end{example}

\begin{rmk}[Pointwise periodic homeomorphisms]\label{rmk:montgomery-pointwise-periodic}
The previous example applies, in particular, to pointwise periodic
homeomorphisms of compact manifolds. Indeed, by Montgomery's theorem
\cite{montgomery}, if $f$ is a homeomorphism of a connected manifold and every
point is periodic, then $f$ is periodic: there exists $q\geq 1$ such that
\begin{equation*}
f^q=\Id.
\end{equation*}

If $M$ is compact but not connected, then $M$ has finitely many connected
components. Since $f$ is a homeomorphism, it permutes these components. Hence
there exists $s\geq 1$ such that $f^s$ preserves each connected component.
Applying Montgomery's theorem to $f^s$ on each component and taking a common
multiple of the resulting periods, one obtains a global period for $f$:
there exists $Q\geq 1$ such that
\begin{equation*}
f^Q=\Id_M.
\end{equation*}
Consequently, every pointwise periodic homeomorphism of a compact manifold has
zero order of emergence.

This is a basic difference between maps and flows on compact manifolds. For
maps, pointwise periodicity forces a uniform global period. For flows, all
orbits may be periodic while the periods are unbounded; this is precisely the
phenomenon exploited by the Epstein--Vogt construction.
\end{rmk}

Finally, we present a family of examples which appear naturally in hyperbolic
dynamics and which, although dynamically interesting, are still simple from the
point of view of emergence.

\begin{example}[Finitely many attracting periodic orbits]\label{ex:finite-sinks}
Assume that $f:M\to M$ has finitely many attracting periodic orbits
\begin{equation*}
\mathcal O_1,\dots,\mathcal O_k
\end{equation*}
whose basins cover $\mathrm{Leb}$-almost every point of $M$. Let $\nu_i$ be the
uniform probability measure on the periodic orbit $\mathcal O_i$. If $B_i$
denotes the basin of $\mathcal O_i$, then
\begin{equation*}
\mathsf e^f(x)=\nu_i
\qquad\text{for }\mathrm{Leb}\text{-a.e. }x\in B_i.
\end{equation*}
Therefore the set of possible empirical measures has cardinality at most $k$.
Hence
\begin{equation*}
\mathcal E_{\mathrm{Leb}}(f)(\varepsilon)\leq k
\qquad\text{for every }\varepsilon>0.
\end{equation*}
In particular, systems with finitely many attracting periodic orbits whose
basins cover full volume have bounded emergence, and therefore zero order of
emergence. See \cite{berger2017emergence} for a more detailed explanation.
\end{example}

These examples illustrate that zero order of emergence is compatible with
non-ergodicity and even with a continuum of distinct empirical measures. What
matters for positive order of emergence is that the family of empirical
measures is large in a genuinely non-polynomial way.

\subsection{Quantization and ergodic decompositions} \label{subsection: emergence and quantization}

Again, let $(X,d)$ be a compact metric space, and let $\mathcal{M}(X)$ denote the space of probability measures on $X$, endowed with the Kantorovich--Wasserstein metric $\mathsf{d}$.

\begin{definition}
	The \textbf{quantization number} $\mathcal{Q}_{\mu}(\eps)$ of a measure $\mu \in \mathcal{M}(X)$ at scale $\eps>0$ is the least integer $N$ such that there exists a probability measure $\nu\in \mathcal{M}(X)$ supported on a set of cardinality $N$ satisfying $\mathsf{d}(\mu,\nu)\leq \varepsilon$.
\end{definition}

We now state a monotonicity property of the quantization number under a Lipschitz pushforward that will be useful below:

\begin{prop}[{\cite[Prop. 3.17]{bergerbochi}}]
	\label{quantizacao lipschitz}
	Let $(X,d)$ and $(Y,d)$ be compact metric spaces, $\mu \in \mathcal{M}(X)$ be a measure in $X$, and $h\colon X\to Y$ be a $\kappa$-Lipschitz map. Then, if $\nu\coloneqq h_{\ast}\mu$, we have: $$\mathcal{Q}_{\mu}(\eps)\geq \mathcal{Q}_{\nu}(\kappa\eps),$$ for all $\eps>0$.
\end{prop}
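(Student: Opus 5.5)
The natural strategy is to start from a near-optimal finitely supported approximation of $\mu$ and push it forward by $h$. By the definition of $\mathcal{Q}_{\mu}$ applied with $N=\mathcal{Q}_{\mu}(\eps)$, there is a probability measure $\mu'$ on $X$ supported on a set $\{x_1,\dots,x_N\}$ with $\mathsf{d}(\mu,\mu')\le\eps$. I then set $\nu'\coloneqq h_\ast\mu'$. This measure is supported on $\{h(x_1),\dots,h(x_N)\}$, which has cardinality at most $N$ (some images may coincide, which only helps). So everything reduces to showing
\[
\mathsf{d}(h_\ast\mu,h_\ast\mu')\le\kappa\,\mathsf{d}(\mu,\mu')\le\kappa\eps .
\]
Granting this, $\nu'$ is a measure supported on at most $N$ points within distance $\kappa\eps$ of $\nu$. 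The minimality in the definition of $\mathcal{Q}_\nu$ then gives $\mathcal{Q}_\nu(\kappa\eps)\le N=\mathcal{Q}_\mu(\eps)$.

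The key step is that pushforward by a $\kappa$-Lipschitz map is $\kappa$-Lipschitz for $\mathsf{d}$. I would prove this with the dual formula \eqref{def: katorovitch-wasserstein 2}. For $u\in\mathrm{Lip}_1(Y)$, the function $u\circ h/\kappa$ lies in $\mathrm{Lip}_1(X)$, when $\kappa>0$. Hence
\[
\Bigl|\int u\,d h_\ast\mu-\int u\,dh_\ast\mu'\Bigr|=\kappa\Bigl|\int \tfrac{u\circ h}{\kappa}\,d\mu-\int\tfrac{u\circ h}{\kappa}\,d\mu'\Bigr|\le\kappa\,\mathsf{d}(\mu,\mu').
\]
Taking the supremum over $u$ gives the claim. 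The degenerate case $\kappa=0$ means $h$ is constant, so $\nu$ is a Dirac mass and $\mathcal{Q}_\nu\equiv1$; there the inequality is trivial.

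As an alternative to duality, one can use the primal formula \eqref{def: katorovitch-wasserstein}. Any coupling $\pi\in\Pi(\mu,\mu')$ yields the coupling $(h\times h)_\ast\pi\in\Pi(h_\ast\mu,h_\ast\mu')$, whose cost is at most $\kappa\int d\,d\pi$. Infimizing over $\pi$ gives the same bound.

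I do not expect a serious obstacle here. The only points needing care are that the infimum in the definition of $\mathcal{Q}_\mu$ is attained as stated (it is, since $\mathcal{Q}_\mu(\eps)$ is by definition an integer realized by some $\nu$), and that the image support may have strictly fewer than $N$ points, which is harmless.
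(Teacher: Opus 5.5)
Your proof is correct. Pushing forward an optimal $N$-point approximation of $\mu$ and using that $h_\ast$ is $\kappa$-Lipschitz for $\mathsf{d}$ is the standard argument behind the cited result. The paper does not prove this statement itself and only quotes it from Berger--Bochi, but your coupling variant of the key step is exactly the case $C=\kappa$, $\alpha=1$ of its Lemma~\ref{l:push_Holder}, so the approaches coincide.
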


Whenever a dynamical system $f\colon X\to X$ preserves a measure $\mu$, we can relate the quantization number of its ergodic decomposition $\hat{\mu}\coloneqq\mathsf{e}^{f}_{\ast}\mu$ (viewed as a measure on $\mathcal{M}(X)$) to the emergence of $f$ with respect to~$\mu$:

\begin{prop}[{\cite[Prop. 3.12]{bergerbochi}}]
	\label{prop: quantizacao vs emeergencia}
	For every dynamics $f\colon X\to X$, the emergence of $f$ with respect to any invariant measure $\mu \in \mathcal{M}_{f}(X)$ equals the quantization number of the ergodic decomposition $\hat{\mu}\coloneqq \mathsf{e}^{f}_{\ast}\mu$: 
	\begin{equation}\label{eq: emergencia e quantizacao}
		\mathcal{E}_{\mu}(f)(\eps)=\mathcal{Q}_{\hat{\mu}}(\eps),
	\end{equation}
	for every $\eps>0$.
\end{prop}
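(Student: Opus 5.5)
We prove Proposition \ref{prop: quantizacao vs emeergencia} by showing the two inequalities separately. Throughout, $\mu$ is $f$-invariant, so by the observation after Definition \ref{def: metric emergence}, $\mathcal E_\mu(f)(\eps)$ is the least $N$ for which there are $\mu_1,\dots,\mu_N\in\mathcal M(X)$ with
\begin{equation*}
\int_X \min_{1\le i\le N}\mathsf d(\mathsf e^f(x),\mu_i)\,d\mu(x)\le\eps .
\end{equation*}
Changing variables under $\hat\mu=\mathsf e^f_\ast\mu$, this integral equals
\begin{equation*}
\int_{\mathcal M(X)}\min_i\mathsf d(m,\mu_i)\,d\hat\mu(m).
\end{equation*}
The change of variables is legitimate because $\mathsf e^f$ is measurable and $m\mapsto\min_i\mathsf d(m,\mu_i)$ is continuous. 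So the emergence is the least $N$ for which some $N$ points of $\mathcal M(X)$ have $\hat\mu$-average distance at most $\eps$. The remaining task is to identify this quantity with $\mathcal Q_{\hat\mu}(\eps)$, where the Wasserstein distance on $\mathcal M(\mathcal M(X))$ is built over the metric $\mathsf d$.

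For $\mathcal Q_{\hat\mu}(\eps)\le\mathcal E_\mu(f)(\eps)$, take optimal $\mu_1,\dots,\mu_N$ and a measurable nearest-point map $\pi(m)$, choosing the least index attaining the minimum. Set $\nu=\pi_\ast\hat\mu$, which is supported on $\{\mu_1,\dots,\mu_N\}$. The coupling $(\mathrm{id},\pi)_\ast\hat\mu$ then gives
\begin{equation*}
\mathsf d(\hat\mu,\nu)\le\int\mathsf d(m,\pi(m))\,d\hat\mu(m)\le\eps .
\end{equation*}

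For the reverse inequality, let $\nu=\sum_{i=1}^N p_i\delta_{\mu_i}$ satisfy $\mathsf d(\hat\mu,\nu)\le\eps$, and let $\gamma$ be any coupling of $\hat\mu$ and $\nu$. Since $\gamma$ is concentrated on $\mathcal M(X)\times\{\mu_1,\dots,\mu_N\}$, we have
\begin{equation*}
\int\mathsf d(m,m')\,d\gamma(m,m')\ \ge\ \int\min_i\mathsf d(m,\mu_i)\,d\hat\mu(m).
\end{equation*}
Taking the infimum over couplings shows that the average minimal distance is at most $\mathsf d(\hat\mu,\nu)\le\eps$. Hence the same $\mu_i$ witness $\mathcal E_\mu(f)(\eps)\le N$.

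The only delicate point is measure-theoretic: the measurability of $\mathsf e^f$, which the paper notes, and the a.e. existence of its limit. The change of variables and the minimal-index selection are routine once these are in hand. Infimum versus minimum issues do not arise, because both quantities are defined via existence of a finite configuration satisfying a non-strict inequality, and each direction above transfers such a configuration directly.
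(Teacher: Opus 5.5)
Your proof is correct and follows the standard argument. You rewrite the emergence as $\int\min_i\mathsf d(m,\mu_i)\,d\hat\mu(m)$ via the remark after the definition of emergence and the pushforward, then identify this with $\mathcal Q_{\hat\mu}(\eps)$ through the identity $\min_{\operatorname{supp}\nu\subset F}\mathsf d(\hat\mu,\nu)=\int\mathsf d(m,F)\,d\hat\mu(m)$ for finite $F$ (nearest-point coupling for one inequality, finite-support lower bound on every coupling for the other). The paper gives no proof of its own and simply cites \cite[Prop.~3.12]{bergerbochi}, which rests on the same reduction.
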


\subsection{Factors and Emergence} \label{subsection: factors and emergence}

In this section we consider a continuous dynamical system $g$ on $(Y,\nu)$ which is a factor of a continuous dynamical system $f$ of $(X,\mu)$. This means that there exists $h: (X,\mu) \to (Y,\nu)$ such that the following diagram commutes:
	\[
         \begin{array} {rcccl}
	& &f& &\\
	& X  &\to & X &\\
	h&\downarrow &  &\downarrow &h\\
	& Y  &\to & Y &\\
	& &g& &\end{array}\qand h_{\ast} \mu = \nu\; . \]
	
We claim that we can bound the emergence of $g$ in terms of that of $f$, if  $h$ satisfies some regularity assumptions. To do so, we need a lemma (which was done in collaboration with Pierre Berger):

\begin{lemma}\label{l:push_Holder}
	Let $f \colon (Y,d) \to (Z,d)$ be a $(C,\alpha)$-H\"older map between compact metric spaces, with $0< \alpha\leq 1$.
	Let $F \colon (\M(Y),\mathsf{d}) \to (\M(Z),\mathsf{d})$ be the map $\mu \mapsto f_{\ast}\mu$.
	Then $F$ is $(C,\alpha)$-H\"older.
\end{lemma}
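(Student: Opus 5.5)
The plan is to use the coupling formulation \eqref{def: katorovitch-wasserstein}: push an optimal coupling forward by $f\times f$ and then apply Jensen's inequality to the concave function $t\mapsto t^\alpha$. Fix $\mu,\nu\in\M(Y)$ and let $\pi\in\Pi(\mu,\nu)$ be an optimal coupling, so $\int d(x,y)\,d\pi=\mathsf d(\mu,\nu)$. Optimal couplings exist because $\Pi(\mu,\nu)$ is weak-$\star$ compact and the cost is continuous. If one prefers to avoid this, take a near-optimal $\pi$ and let the error go to zero at the end.

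The key steps are as follows.

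First, set $\tilde\pi\coloneqq (f\times f)_\ast\pi\in\M(Z\times Z)$. Its marginals are $f_\ast\mu$ and $f_\ast\nu$, because $p_i\circ(f\times f)=f\circ p_i$. Hence $\tilde\pi\in\Pi(F\mu,F\nu)$.

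Second, by the change of variables formula and the H\"older bound $d(f(x),f(y))\le C\,d(x,y)^\alpha$,
\begin{equation*}
\mathsf d(F\mu,F\nu)\le\int_{Z\times Z} d(z,w)\,d\tilde\pi(z,w)=\int_{Y\times Y} d(f(x),f(y))\,d\pi(x,y)\le C\int_{Y\times Y} d(x,y)^\alpha\,d\pi(x,y).
\end{equation*}

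Third, since $0<\alpha\le1$, the function $t\mapsto t^\alpha$ is concave on $[0,\infty)$ and $\pi$ is a probability measure. Jensen's inequality therefore gives
\begin{equation*}
\int d(x,y)^\alpha\,d\pi\le\Bigl(\int d(x,y)\,d\pi\Bigr)^{\alpha}=\mathsf d(\mu,\nu)^\alpha.
\end{equation*}
Combining this with the second step yields $\mathsf d(F\mu,F\nu)\le C\,\mathsf d(\mu,\nu)^\alpha$, which is the claim.

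The only delicate point is the third step. The H\"older exponent must be transferred from points to measures, and this works precisely because the exponent lies in $(0,1]$, which makes $t^\alpha$ concave so that Jensen's inequality goes the right way. If $\alpha>1$ were allowed, the argument would fail, although such maps are constant on connected spaces anyway. In the near-optimal variant, take $\pi$ with cost at most $\mathsf d(\mu,\nu)+\eta$. The same chain then gives $C(\mathsf d(\mu,\nu)+\eta)^\alpha$, and letting $\eta\to0$ recovers the bound.
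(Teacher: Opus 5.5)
Your proof is correct and is essentially the paper's argument: push the coupling forward by $f\times f$, bound its cost using the H\"older estimate, then apply Jensen's inequality to the concave map $t\mapsto t^\alpha$. The paper works with an arbitrary transport plan and takes the infimum implicitly, which is equivalent to your optimal or near-optimal choice. One side remark is false as stated: H\"older maps with exponent $\alpha>1$ need not be constant on connected metric spaces. For example, the identity from $([0,1],|x-y|^{1/2})$ to $([0,1],|x-y|)$ is $(1,2)$-H\"older. This plays no role in the lemma.
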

\begin{proof}
	Given $\mu_1$, $\mu_2 \in \M(Y)$, consider a transport plan $\pi \in \M(Y \times Y)$.
	Then $\tilde \pi \coloneqq (f \times f)_{\ast}(\pi)$ is a transport plan from $f_{\ast}\mu_1$ to $f_{\ast}\mu_2$ with:
	$$
	\mathrm{cost}(\tilde \pi) = \int d(f(x),f(y))\, d\pi(x,y) \le C \int d(x,y)^\alpha \, d\pi(x,y)\; .
	$$
	As the map $t\mapsto t^\alpha$ is concave down, by Jensen's inequality, we have:
	$$
	\mathrm{cost}(\tilde \pi) \le  C \left(\int d(x,y)\, d\pi(x,y)\right)^\alpha 
	= C \, \mathrm{cost}(\pi)^\alpha \, .
	$$
	So $\mathsf{d}(f_{\ast}\mu_1 , f_{\ast}\mu_2) \le C \cdot \mathsf{d}(\mu_1 , \mu_2)^\alpha$.
\end{proof}
						
\begin{prop} \label{lemma lipschitz preserva emergencia}
Let $f\colon X \to X$ and $g\colon Y \to Y$ be continuous maps between compact metric spaces. If there exists a $(C,\alpha)$-H\"older surjection $h\colon X \to Y$ such that $h\circ f=g\circ h$, then for every $\mu \in \mathcal{M}(X)$ and every $\eps>0$,  
$$\mathcal{E}_{\mu}(f)( \eps)\geq \mathcal{E}_{\nu}(g)(C\eps^\alpha),$$ 
where $\nu=h_{\ast}\mu$. In particular, if $f$ has at most polynomial emergence, $g$ also has it. 
\end{prop}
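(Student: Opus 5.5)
The plan is to push an almost-optimal family of measures for $f$ forward by $h$ and check that the pushed family works for $g$ at scale $C\eps^\alpha$. Fix $\eps>0$ and let $N=\mathcal{E}_{\mu}(f)(\eps)$. Choose probability measures $\mu_1,\dots,\mu_N\in\M(X)$ with
\[
\limsup_{n\to\infty}\int_X \min_{1\le i\le N}\mathsf{d}(\mathsf{e}_n^f(x),\mu_i)\,d\mu(x)\le\eps .
\]
Set $\nu_i\coloneqq h_\ast\mu_i\in\M(Y)$. The goal is to show that these $N$ measures witness $\mathcal{E}_{\nu}(g)(C\eps^\alpha)\le N$.

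The first step is the conjugacy identity for empirical measures. Since $h\circ f^j=g^j\circ h$ for every $j$, we get $h_\ast\delta_{f^j(x)}=\delta_{g^j(h(x))}$. By linearity of the pushforward,
\[
h_\ast\mathsf{e}_n^f(x)=\mathsf{e}_n^g(h(x)).
\]
Next apply Lemma~\ref{l:push_Holder} to $h$. For every $x$ and $i$ this gives
\[
\mathsf{d}(\mathsf{e}_n^g(h(x)),\nu_i)\le C\,\mathsf{d}(\mathsf{e}_n^f(x),\mu_i)^\alpha .
\]
Since $t\mapsto Ct^\alpha$ is increasing, the minimum over $i$ commutes with it. Hence
\[
\min_i\mathsf{d}(\mathsf{e}_n^g(h(x)),\nu_i)\le C\bigl(\min_i\mathsf{d}(\mathsf{e}_n^f(x),\mu_i)\bigr)^\alpha .
\]

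Now change variables using $\nu=h_\ast\mu$, and then apply Jensen's inequality to the concave function $t\mapsto t^\alpha$ with respect to the probability measure $\mu$:
\[
\int_Y\min_i\mathsf{d}(\mathsf{e}_n^g(y),\nu_i)\,d\nu(y)=\int_X\min_i\mathsf{d}(\mathsf{e}_n^g(h(x)),\nu_i)\,d\mu(x)\le C\Bigl(\int_X\min_i\mathsf{d}(\mathsf{e}_n^f(x),\mu_i)\,d\mu(x)\Bigr)^\alpha .
\]
Take $\limsup_{n\to\infty}$. Because $t\mapsto t^\alpha$ is continuous and nondecreasing, the $\limsup$ passes inside the power, so the right side is at most $C\eps^\alpha$. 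This proves $\mathcal{E}_\nu(g)(C\eps^\alpha)\le N=\mathcal{E}_\mu(f)(\eps)$.

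The hypothesis that $h$ is a surjection is not needed for this inequality. The step requiring the most care is the measurability of the integrands. It follows because $x\mapsto\mathsf{e}_n^f(x)$ is continuous, so the integrands are continuous in $x$.

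For the polynomial statement, suppose $\mathcal{E}_\mu(f)(\eps)\le K\eps^{-\beta}$ for small $\eps$. Given small $\delta>0$, put $\eps=(\delta/C)^{1/\alpha}$, so that $C\eps^\alpha=\delta$. Then
\[
\mathcal{E}_\nu(g)(\delta)\le K (C/\delta)^{\beta/\alpha}=O(\delta^{-\beta/\alpha}),
\]
so $g$ has at most polynomial emergence.
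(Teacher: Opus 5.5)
Your proof is correct and follows essentially the same route as the paper: push the optimal family forward by $h$, use $h_\ast\mathsf{e}_n^f(x)=\mathsf{e}_n^g(h(x))$ together with the H\"older push-forward lemma, change variables via $\nu=h_\ast\mu$, apply Jensen, and take $\limsup$. Your explicit derivation of the polynomial consequence, your measurability check, and your remark that surjectivity of $h$ is not needed are all correct; the paper leaves these points implicit.
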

\begin{proof}
							Since $h$ is $(C,\alpha)$-H\"older, by Lemma \ref{l:push_Holder}, we have:
							$$\mathsf{d}(h_{\ast}\mathsf{e}_{n}^{f}(x),h_{\ast}\mu_{i})\leq C \cdot \mathsf{d}(\mathsf{e}_{n}^{f}(x),\mu_{i})^\alpha,$$ for any $x\in X$ and any $\mu_{i}\in \mathcal{M}(X)$. Since $h_{\ast}\mathsf{e}_{n}^{f}(x)=\mathsf{e}^{g}_{n}(h(x))$, we conclude  
							$$\mathsf{d}(\mathsf{e}^{g}_{n}(h(x)),h_{\ast}\mu_{i})\leq C \cdot \mathsf{d}(\mathsf{e}_{n}^{f}(x),\mu_{i})^\alpha.$$ 

							Hence given a finite family of probability measures $(\mu_i)_{1\le i \le N}$, we have:
							\begin{align*}
								\int_{Y} \min_{1\leq i\leq N}{\mathsf{d}(\mathsf{e}_{n}^{g}(y),h_{\ast}\mu_{i})} \ d\nu(y)& =   \int_{Y} \min_{1\leq i\leq N}{\mathsf{d}(\mathsf{e}_{n}^{g}(y),h_{\ast}\mu_{i})} \ dh_{\ast}\mu(y)\\
								& = \int_{X} \min_{1\leq i\leq N}{\mathsf{d}(\mathsf{e}_{n}^{g}(h(x)),h_{\ast}\mu_{i})} \ d\mu(x)\\
								&\le C \cdot   \int_{X} \min_{1\leq i\leq N} \mathsf{d}(\mathsf{e}_{n}^{f}(x),\mu_{i})^\alpha\ d\mu(x)\\
								&\le C \cdot  \left(  \int_{X} \min_{1\leq i\leq N} \mathsf{d}(\mathsf{e}_{n}^{f}(x),\mu_{i})\ d\mu(x)\right)^\alpha.
							\end{align*} 
							Now let $N=\mathcal{E}_{\mu}(f)(\varepsilon)$. By definition, we can choose
$\mu_1,\dots,\mu_N$ such that
\begin{equation*}
\limsup_{n\to\infty}
\int_{X}
\min_{1\leq i\leq N}
\mathsf{d}(\mathsf{e}_{n}^{f}(x),\mu_i)\,d\mu(x)
\leq \varepsilon.
\end{equation*}
Taking $\limsup_{n\to\infty}$ in the previous inequality, we obtain
\begin{equation*}
\limsup_{n\to\infty}
\int_{Y}
\min_{1\leq i\leq N}
\mathsf{d}(\mathsf{e}_{n}^{g}(y),h_{\ast}\mu_i)\,d\nu(y)
\leq C\varepsilon^\alpha.
\end{equation*}
Thus the measures $h_{\ast}\mu_1,\dots,h_{\ast}\mu_N$ approximate the
emergence of $g$ at scale $C\varepsilon^\alpha$. Hence
\begin{equation*}
\mathcal{E}_{\nu}(g)(C\varepsilon^\alpha)
\leq
N
=
\mathcal{E}_{\mu}(f)(\varepsilon),
\end{equation*}
and this proves the proposition.
\end{proof}

We have an analogous result for flows: 
						
\begin{prop} \label{lemma lipschitz preserva emergencia para fluxos}
Let $\varphi^{t}\colon X \to X$ and $\psi^{t}\colon Y \to Y$ be continuous flows between compact metric spaces. If there exists a $(C,\alpha)$-H\"older surjection $h\colon X \to Y$ such that $h\circ \varphi^{t}=\psi^{t}\circ h$ for every $t\in \R$, then for every $\mu \in \mathcal{M}(X)$ and every $\eps>0$, $$  \mathcal{E}_{\mu}(\varphi)( \eps)\geq \mathcal{E}_{\nu}(\psi)(C\eps^\alpha),$$ where $\nu=h_{\ast}\mu$. In particular, if $\varphi^{t}$ has at most polynomial emergence, $\psi^{t}$ also has it.
						\end{prop}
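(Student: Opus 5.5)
The proof follows that of Proposition~\ref{lemma lipschitz preserva emergencia} almost verbatim, with the discrete empirical measures replaced by the time-averaged ones $\mathsf e_T^\varphi$. The first step is the semiconjugacy identity for time averages:
\begin{equation*}
h_\ast \mathsf e_T^\varphi(x)=\frac1T\int_0^T \delta_{h(\varphi^s(x))}\,ds=\frac1T\int_0^T\delta_{\psi^s(h(x))}\,ds=\mathsf e_T^\psi(h(x)),
\end{equation*}
valid for every $x\in X$ and $T>0$. To justify it, test against an arbitrary continuous $u$ on $Y$: by the definition of the integral of a measure-valued path, $\int u\, d(h_\ast\mathsf e_T^\varphi(x))=\frac1T\int_0^T u(h(\varphi^s x))\,ds$. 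Then use $h\circ\varphi^s=\psi^s\circ h$.

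Next, fix $\varepsilon>0$ and set $N=\mathcal E_\mu(\varphi)(\varepsilon)$. Choose $\mu_1,\dots,\mu_N\in\mathcal M(X)$ realizing \eqref{def: emergence flow limsup} at scale $\varepsilon$. By Lemma~\ref{l:push_Holder}, pushforward by $h$ is $(C,\alpha)$-H\"older on $\mathcal M(X)$, so
\begin{equation*}
\mathsf d\bigl(\mathsf e_T^\psi(h(x)),h_\ast\mu_i\bigr)\le C\,\mathsf d\bigl(\mathsf e_T^\varphi(x),\mu_i\bigr)^\alpha .
\end{equation*}
Taking the minimum over $i$ commutes with the increasing map $t\mapsto Ct^\alpha$. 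Change variables via $\nu=h_\ast\mu$ and apply Jensen's inequality for the concave map $t\mapsto t^\alpha$ with respect to the probability $\mu$. This gives
\begin{equation*}
\int_Y\min_i \mathsf d\bigl(\mathsf e_T^\psi(y),h_\ast\mu_i\bigr)\,d\nu(y)\le C\Bigl(\int_X\min_i\mathsf d\bigl(\mathsf e_T^\varphi(x),\mu_i\bigr)\,d\mu(x)\Bigr)^\alpha .
\end{equation*}
Take $\limsup_{T\to+\infty}$. The map $t\mapsto Ct^\alpha$ is continuous and nondecreasing, so it commutes with $\limsup$, and the right side is at most $C\varepsilon^\alpha$. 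Hence the family $h_\ast\mu_1,\dots,h_\ast\mu_N$ witnesses $\mathcal E_\nu(\psi)(C\varepsilon^\alpha)\le N$.

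For the polynomial statement, suppose $\mathcal E_\mu(\varphi)(\varepsilon)\le K\varepsilon^{-\beta}$. Given small $\eta$, put $\varepsilon=(\eta/C)^{1/\alpha}$. Then
\begin{equation*}
\mathcal E_\nu(\psi)(\eta)\le K C^{\beta/\alpha}\eta^{-\beta/\alpha},
\end{equation*}
which is again polynomial.

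The only step requiring care is the first: making sense of the measure-valued integral and checking that pushforward commutes with it, together with measurability of $x\mapsto \mathsf e_T^\varphi(x)$ so that the integrals make sense. The latter holds because $x\mapsto\mathsf e_T^\varphi(x)$ is continuous into $(\mathcal M(X),\mathsf d)$, by uniform continuity of $(s,x)\mapsto\varphi^s(x)$ on $[0,T]\times X$. Surjectivity of $h$ is not actually needed beyond ensuring $\nu$ lives on $Y$.
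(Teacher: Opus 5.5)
Your proposal is correct and is the paper's argument: the paper's proof just says to rerun the proof of Proposition~\ref{lemma lipschitz preserva emergencia} with the identity $h_\ast\mathsf e_T^\varphi(x)=\mathsf e_T^\psi(h(x))$. That is exactly what you do, via the H\"older pushforward lemma, change of variables, Jensen and $\limsup_{T\to+\infty}$. Your additional details — continuity (hence measurability) of $x\mapsto\mathsf e_T^\varphi(x)$, the explicit exponent $\beta/\alpha$ for the polynomial consequence, and the observation that surjectivity of $h$ is not needed — are all correct and are left implicit in the paper.
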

						\begin{proof}
							The proof follows the same lines as Proposition \ref{lemma lipschitz preserva emergencia}, with the only change that now we will have $$h_{\ast}\mathsf{e}_{T}^{\varphi}(x)=\mathsf{e}^{\psi}_{T}(h(x)),$$ for every $x\in X$ and every $T>0$.
						\end{proof}

We finish this section with a result about the emergence of a system with respect to two equivalent probability measures that will be useful to what follows:
						
\begin{prop}\label{prop: emergencia pol nao muda para medidas equiv}
Let $X$ be a compact metric space, and let $\mu$ and $\nu$ be probability
measures on $X$. Assume that there exists $K\geq 1$ such that
\begin{equation*}
K^{-1}\mu\leq \nu\leq K\mu.
\end{equation*}
Let $\varphi^{t}\colon X \to X$ be a continuous flow. Then, for every
$\varepsilon>0$,
\begin{equation*}
\mathcal{E}_{\nu}(\varphi)(K\varepsilon)
\leq
\mathcal{E}_{\mu}(\varphi)(\varepsilon)
\leq
\mathcal{E}_{\nu}(\varphi)(K^{-1}\varepsilon).
\end{equation*}
In particular, $\varphi$ has at most polynomial emergence with respect to
$\mu$ if and only if it has at most polynomial emergence with respect to
$\nu$.
\end{prop}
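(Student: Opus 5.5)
The plan is to compare the two integrals directly, pointwise in the integrand, using the domination $\nu\le K\mu$ and $\mu\le K\nu$. The key observation is that the integrand does not depend on the reference measure at all. For fixed probability measures $\mu_1,\dots,\mu_N$ and $T>0$, set
\begin{equation*}
G_T(x)\coloneqq \min_{1\le i\le N}\mathsf d\bigl(\mathsf e^{\varphi}_T(x),\mu_i\bigr).
\end{equation*}
This is a nonnegative bounded measurable function on $X$ (bounded by $\diam(X)$). Only the measure against which $G_T$ is integrated changes between the two emergences.

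For the first inequality, let $N=\mathcal E_\mu(\varphi)(\varepsilon)$ and choose $\mu_1,\dots,\mu_N$ realizing it, so that $\limsup_{T\to\infty}\int G_T\,d\mu\le\varepsilon$. Since $\nu\le K\mu$ as measures and $G_T\ge0$, we have $\int G_T\,d\nu\le K\int G_T\,d\mu$ for every $T$. This inequality holds for indicator functions, hence for simple functions, and then for nonnegative measurable functions by monotone convergence. Taking $\limsup_{T\to\infty}$ gives $\limsup_T\int G_T\,d\nu\le K\varepsilon$. The same family therefore witnesses $\mathcal E_\nu(\varphi)(K\varepsilon)\le N$.

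For the second inequality, we swap the roles of the measures. Let $N=\mathcal E_\nu(\varphi)(K^{-1}\varepsilon)$ with witnesses $\mu_i$, and use $\mu\le K\nu$ (equivalent to $K^{-1}\mu\le\nu$). Then
\begin{equation*}
\limsup_T\int G_T\,d\mu\le K\limsup_T\int G_T\,d\nu\le K\cdot K^{-1}\varepsilon=\varepsilon,
\end{equation*}
so $\mathcal E_\mu(\varphi)(\varepsilon)\le N$.

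For the polynomial statement, suppose $\mathcal E_\nu(\varphi)(\delta)\le A\delta^{-\alpha}$ for small $\delta$. The second inequality gives $\mathcal E_\mu(\varphi)(\varepsilon)\le A K^{\alpha}\varepsilon^{-\alpha}$, which is still polynomial. The converse follows from the first inequality after substituting $\varepsilon=\delta/K$. No invariance of $\mu$ or $\nu$ is needed anywhere, since we work with the $\limsup$ form of Definition~\ref{def: metric emergence of a flow}.

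There is no real obstacle here. The only point needing care is measurability of $x\mapsto G_T(x)$. It holds because $x\mapsto\mathsf e^\varphi_T(x)$ is continuous into $(\mathcal M(X),\mathsf d)$, as $\varphi$ is continuous and $[0,T]$ is compact, and because the minimum of finitely many continuous functions is continuous.
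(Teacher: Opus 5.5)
Your proof is correct and follows the paper's argument essentially verbatim: you reuse the witnesses for one measure, bound the integral via $\nu\le K\mu$ (resp.\ $\mu\le K\nu$), and pass to the $\limsup$. Your remarks on measurability of the integrand and your explicit derivation of the polynomial equivalence fill in details that the paper leaves implicit.
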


\begin{proof}
We prove the left inequality. Let
\begin{equation*}
N=\mathcal{E}_{\mu}(\varphi)(\varepsilon).
\end{equation*}
By definition, there exist probability measures
$\mu_1,\dots,\mu_N$ on $X$ such that
\begin{equation*}
\limsup_{T\to+\infty}
\int_X
\min_{1\leq i\leq N}
\mathsf d(\mathrm{e}^{\varphi}_{T}(x),\mu_i)\,d\mu(x)
\leq
\varepsilon.
\end{equation*}
Since $\nu\leq K\mu$, for every $T>0$ we have
\begin{equation*}
\int_X
\min_{1\leq i\leq N}
\mathsf d(\mathrm{e}^{\varphi}_{T}(x),\mu_i)\,d\nu(x)
\leq
K
\int_X
\min_{1\leq i\leq N}
\mathsf d(\mathrm{e}^{\varphi}_{T}(x),\mu_i)\,d\mu(x).
\end{equation*}
Taking $\limsup_{T\to+\infty}$, we get
\begin{equation*}
\limsup_{T\to+\infty}
\int_X
\min_{1\leq i\leq N}
\mathsf d(\mathrm{e}^{\varphi}_{T}(x),\mu_i)\,d\nu(x)
\leq
K\varepsilon.
\end{equation*}
Thus the same $N$ measures approximate the emergence with respect to $\nu$ at
scale $K\varepsilon$. Therefore
\begin{equation*}
\mathcal{E}_{\nu}(\varphi)(K\varepsilon)
\leq
N
=
\mathcal{E}_{\mu}(\varphi)(\varepsilon).
\end{equation*}

For the right inequality, one starts with
$N=\mathcal{E}_{\nu}(\varphi)(K^{-1}\varepsilon)$ and uses the other inequality
$\mu\leq K\nu$. This gives
\begin{equation*}
\mathcal{E}_{\mu}(\varphi)(\varepsilon)
\leq
\mathcal{E}_{\nu}(\varphi)(K^{-1}\varepsilon).
\end{equation*}
Combining the two inequalities proves the proposition.
\end{proof}

\begin{rmk}
The analogous proposition holds for the emergence of a continuous self-map $f\colon X\to X$.
\end{rmk}	
						
\section{The Epstein--Vogt model}  \label{section: Epstein-Vogt's counterexample}

In a breakthrough paper from 1978 (see \cite{EV78}), David Epstein and Elmar Vogt presented the first codimension-three counterexample to the Periodic Orbit Conjecture: they construct a polynomial vector field on $\R^{7}$ whose orbits are all periodic and the periods are unbounded, that lives in a $4$-dimensional submanifold also defined by polynomial equations (in particular, both the vector field and the manifold are analytic). 
							
							In the present section, we recall in some detail their construction while generalizing it. The proofs are on the same lines as those in \cite{EV78}.
							\subsection{Construction of the manifold}
							
							Consider the following domain of $\R^{2}$: \begin{equation}\label{octogonal domain} D=\{(x,y)\in \R^{2} \mid -2\leq x\leq 2, -2\leq y\leq 2, -3\leq x+y\leq 3, -3\leq x-y\leq 3\}.\end{equation} Define the function $\psi_{0}\colon \R^{2}\to \R$ to be \begin{align}\label{psi_0}
								\psi_{0}(x,y)&=(2-x)(2+x)(2-y)(2+y)(3+x+y)(3-x-y)(3+x-y)(3-x+y)\nonumber \\&=(4-x^2)(4-y^2)(9-(x+y)^2)(9-(x-y)^2) .\end{align} Notice that $\psi_{0}$ is positive on the interior of $D$ and is zero on the boundary $\partial D$ of $D$. Moreover, by Lemma 3.2 in \cite{EV78}, $d\psi_{0}$ does not vanish on $\interior{D}-\{0\}$.
							
							Now, let $\psi=\psi_{0}\cdot \varphi$, where $\varphi\colon \R^{2}\to \R$ satisfies: \begin{enumerate}[label=\((\roman*)\)]
								\item \label{phi eh c infinito} $\varphi$ is $C^{\infty}$;
								\item \label{phi eh positiva} $\varphi\geq 0$ on $D$ and $\psi>0$ on $\interior{D}$;
								\item \label{psi é submersao} the differential of $\psi=\psi_{0}\cdot \varphi$ does not vanish on $\interior{D}-\{0\}$;
								\item \label{phi eh simetrica} $\varphi$ satisfies that for every $(x,y)\in \R^{2}$, $$\varphi(x,y)=\varphi(-x,y)=\varphi(x,-y).$$
							\end{enumerate} In particular, condition \ref{phi eh simetrica} implies $\varphi(x,y)=\varphi(-x,-y)$. The original Epstein--Vogt example is obtained by taking $\varphi\equiv1$.

							Multiplying $\psi$ by a positive constant if necessary, we may assume $\max{\{\psi(x,y) \mid (x,y)\in D\}}>1$. Define the set $A$: \begin{equation}\label{definicao do anel} A=D\cap \{(x,y)\in \R^{2} \mid  \psi(x,y)\leq 1\}.\end{equation} We claim that $A$ is a topological annulus, as the one depicted in Figure \ref{fig: octogono_original_cinza}. 
							
							\begin{lemma} \label{lemma: A is annulus}
								The set $A$ is homeomorphic to $\T \times [0,1]$.
							\end{lemma}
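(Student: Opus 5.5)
Since $\psi$ is continuous on $D$, vanishes on $\partial D$, is positive on $\interior D$, and is not submersive only at the origin (condition \ref{psi é submersao}), the plan is to show that $0$ is the unique critical point of $\psi$ in $\interior D$ and the global maximum there, and that each level set $\{\psi=c\}$ with $0<c<\max\psi$ is a single smooth circle. Then $A$ is the region between the boundary curve $\partial D=\{\psi=0\}$ and the circle $\{\psi=1\}$, and the gradient flow of $\psi$ carries one onto the other.

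\textbf{Step 1: the critical point.} By the symmetry \ref{phi eh simetrica} and the symmetry of $\psi_0$, $\psi$ is even in each variable, so $\partial_x\psi(0,y)=0$ and $\partial_y\psi(x,0)=0$; in particular $d\psi(0)=0$. On the compact set $D$, $\psi$ attains its maximum, which is $>1>0$. Since $\psi=0$ on $\partial D$, the maximum is attained at an interior critical point. By \ref{psi é submersao} that point must be $0$. So $\psi(0)=\max_D\psi>1$ and $0$ is the only critical point in $\interior D$.

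\textbf{Step 2: gradient-flow structure.} Let $U=\interior D\setminus\{0\}$. On $U$, consider the flow of $X=\nabla\psi/|\nabla\psi|^2$, so that $\psi$ increases at unit speed along trajectories. A trajectory starting at $p\in U$ with $\psi(p)=c$ stays in $\interior D$, because $\psi$ increases and $\psi=0$ on $\partial D$. It can leave every compact subset of $U$ only by approaching $0$, which happens exactly when $\psi\to\psi(0)$. Hence the flow defines diffeomorphisms $\{\psi=c\}\to\{\psi=c'\}$ for all $0<c<c'<\psi(0)$. Near $0$ one argues that $\{\psi\ge c'\}$ is a small topological disk for $c'$ close to $\psi(0)$: it is a compact set whose boundary is a smooth closed $1$-manifold shrinking to the strict local maximum $0$, and a flow-out argument shows it is star-like. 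To get this, compare with the backward flow from a small circle around $0$; every backward trajectory from that circle reaches $\partial D$ as $\psi\to0$. Reversing the argument, every level $\{\psi=c\}$ is diffeomorphic to a circle, and $\{0<\psi\le 1\}$ is diffeomorphic to $\T\times(0,1]$ via $p\mapsto(\theta(p),\psi(p))$, where $\theta$ is the endpoint of the flow line on the circle $\{\psi=1\}$.

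\textbf{Step 3: the boundary end.} It remains to extend this to $\psi=0$, i.e.\ to show that flow lines of $-X$ land at well-defined points of $\partial D$ and that the landing map $\{\psi=1\}\to\partial D$ is a homeomorphism onto the octagon boundary. This is the main obstacle, because $\nabla\psi$ degenerates at the eight corners of $D$ and $X$ blows up as $\psi\to0$. I would avoid the gradient near $\partial D$. On a thin collar, with $0<c_0$ small, use the radial structure instead. $D$ is star-shaped about $0$, and the plan is to verify that $r\mapsto\psi(r\omega)$ is strictly decreasing near the boundary for each direction $\omega$. For $\psi_0$ this follows from a computation: $\partial_r\log\psi_0$ is a sum of terms $-\ell_i(\omega)/(a_i-r\ell_i(\omega))$, which is very negative near the boundary, and the bounded $\varphi$ factor does not change the sign close to $\partial D$, provided $\varphi>0$ near $\partial D$ (a condition I would add or check). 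With that, $\{0\le\psi\le c_0\}$ is homeomorphic to $\partial D\times[0,c_0]$ radially. Gluing this collar to the gradient-flow product $\{c_0\le\psi\le1\}\cong\T\times[c_0,1]$ along the common circle $\{\psi=c_0\}$ gives $A\cong\T\times[0,1]$.

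Should radial monotonicity fail for a general $\varphi$, I would instead invoke Lemma 3.2 of \cite{EV78} for $\psi_0$ together with the explicit structure of the $\varphi$ used later in the paper.
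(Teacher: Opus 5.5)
Your Step 1 and the level-to-level transport in Step 2 are fine, but Step 3 has a genuine gap. You get radial monotonicity of $\psi$ near $\partial D$ only under the extra assumption $\varphi>0$ near $\partial D$. That assumption is not among conditions (i)--(iv): condition (ii) only gives $\varphi\ge 0$ on $D$ and $\psi>0$ on $\interior D$. So $\varphi$ may vanish on the octagon, and then $\partial_r\log\varphi$ is unbounded there and the comparison with $\partial_r\log\psi_0$ yields nothing.

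This is exactly the case the paper needs. The Hamiltonian $\psi=H_{\Omega,\eps}$ of Section~\ref{subsection: An example of foliation with positive order of emergence} is flat at the outer boundary, so $\varphi=\psi/\psi_0$ vanishes on $\partial D$. Its level curves there are the images under $F\circ h^{-1}$ of horizontal circles $\T\times\{\rho\}$, produced by the Berger--Bochi conjugacies, and nothing forces them to be radial graphs. So neither the added hypothesis nor the fallback to ``the explicit structure of $\varphi$'' closes the step.

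The point you are missing is that no control of $\psi$ near $\partial D$ is needed. Suppose $\Gamma=\{\psi=1\}$ is known to be a single smooth Jordan curve around $0$, bounding an open disk $\Delta$ with $\{\psi>1\}\cap D=\Delta$. (A point of $D\setminus\Delta$ with $\psi>1$, or a point of $\Delta$ with $\psi\le 1$, would produce an extremum of $\psi$ on $D\setminus\Delta$, resp.\ on $\overline{\Delta}$, at a critical point other than $0$.) Then $A=D\setminus\Delta$ is the closed region between the disjoint nested Jordan curves $\partial D$ and $\Gamma$. Hence $A\cong\T\times[0,1]$ by the Schoenflies theorem in its annulus form. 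Equivalently: $\psi<1$ near $\partial D$, so $A$ agrees there with the convex polygon $D$. Thus $A$ is a compact surface with boundary $\partial D\sqcup\Gamma$ whose interior $\{0<\psi<1\}$ is an open annulus by your Step 2.

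The end of Step 2 is also not justified as written. The point $0$ may be a degenerate maximum, and there is no reason for small superlevel sets to be star-shaped. The paper avoids any local analysis at $0$:
\begin{itemize}
\item each component of $\{\psi=v\}$, $0<v<\psi(0)$, is a circle in the convex set $\interior D$;
\item it bounds a disk on which the non-constant $\psi$ has an interior extremum, i.e.\ a critical point, which must be $0$;
\item two components would then be nested and trap another extremum between them, so each level is a single circle around $0$.
\end{itemize}
Your small-circle idea can also be made rigorous. Transporting a small circle around $0$ backwards along flow lines onto $\{\psi=c\}$ is a continuous surjection, because every forward trajectory tends to the unique maximum $0$. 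Hence each level is connected, and so a circle.

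The paper's proof stops once the levels are shown to be circles and says nothing about the octagon. You were right that something is needed at $\partial D$, but it should be the topological argument above, not a condition on $\varphi$.
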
	
							\begin{proof}
We recall that $\psi(A)=[0,1]$ and that the boundary of $A$ is formed by the level set $\{\psi=0\}$ which is an octagon and $\{\psi=1\}$ which is a circle. 
								As $\nabla \psi ^2= 2\psi \nabla \psi$, by property \ref{psi é submersao}, the differential $\nabla \psi^2$  does not vanish on $D\setminus \{0\}\supset \interior A$. 
								
								Now take $v^2\in  (0,1)$. The level set $\psi^{-1}(v)= (\psi^2)^{-1}(v^2)$ is an embedded  curve in $\interior A$. It is also compact because it is a closed subset of $(\psi)^{-1}([v/2, (1+v)/2 ])\subset A$. Hence it is a disjoint union of circles. 
								Inside each of these circles, $\restr{\psi}{D}$ displays a maximum and so a critical point. By \ref{psi é submersao}, there is a unique critical point and so the level set is a unique circle. Thus on $A\cap \{\psi >0\}$,  each point belongs to a level set of  $\psi^2$ diffeomorphic to a circle, which is its orbit as the gradient of $\psi^2$ does not vanish on it. This proves the lemma.
							\end{proof}

							\begin{rmk}\label{rmk: level sets are circles}
								The proof of Lemma \ref{lemma: A is annulus} establishes that each level set of the Hamiltonian $H=\psi^{2}/2$ is a circle. Hence, every orbit of the associated Hamiltonian flow will be a circle.
							\end{rmk}

							\begin{figure}[ht]
								\centering
								\IfFileExists{octogono_original_cinza.jpeg}{%
                                \includegraphics[width=0.325\textwidth]{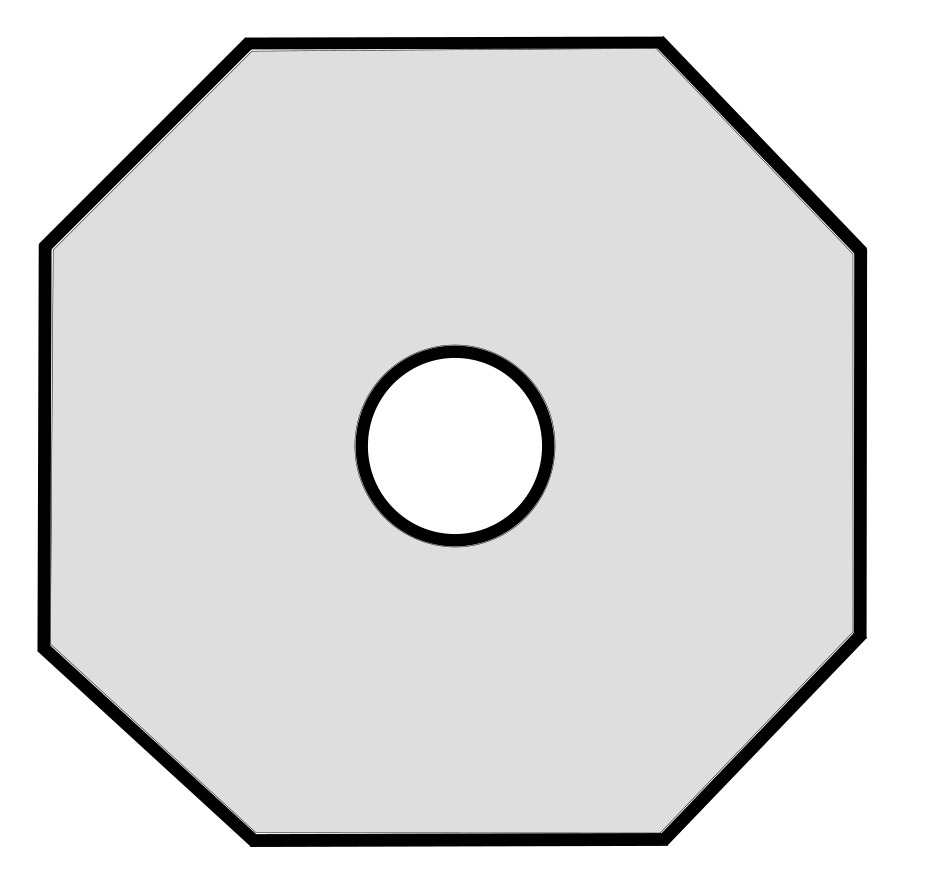}%
                            }{%
                                \fbox{\parbox{0.325\textwidth}{\centering Missing figure:\\
                                \texttt{octogono_original_cinza.jpeg}}}%
                            }
								\caption{The annulus $A$ for $\psi=\psi_{0}$.}
								\label{fig: octogono_original_cinza}
							\end{figure}
							
							Next, define a function $\rho\colon \R^{2}\to \R$ to be $$\rho(x,y)\coloneqq (1-\psi(x,y)) (9-(x+y)^{2})(9-(x-y)^{2}),$$ and a map $F\colon \R^{7}\to \R^{3}$ by $F(\xi)=(F_{1}(\xi),F_{2}(\xi),F_{3}(\xi))$ where, for each $\xi=(x,y,u_{1},u_{2},w_{1},w_{2},z)\in \R^{7}$, we set
							$$\begin{cases}
								F_{1}(\xi)=u_{1}^{2}+u_{2}^{2}-4+x^{2},\\
								F_{2}(\xi)=w_{1}^{2}+w_{2}^{2}-4+y^{2},\\
								F_{3}(\xi)=z^{2}-\rho(x,y).
							\end{cases}$$ Finally, we set $M=F^{-1}(0)$. 
							
							\begin{lemma} \label{lemma: M see projeta sobre A versao psi}
								The projection of $\R^{7}$ onto the first two coordinates maps $M$ onto $A$.
							\end{lemma}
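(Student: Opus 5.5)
We must show two inclusions: the projection $\pi(x,y,u_1,u_2,w_1,w_2,z)=(x,y)$ sends $M$ into $A$, and every point of $A$ has a preimage in $M$. Both come from reading off the three equations $F_1=F_2=F_3=0$ as sign conditions on $(x,y)$.

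\emph{Inclusion $\pi(M)\subset A$.} Let $\xi\in M$.
\begin{itemize}
\item From $F_1(\xi)=0$ we get $4-x^2=u_1^2+u_2^2\ge 0$, so $|x|\le 2$.
\item Likewise $F_2(\xi)=0$ gives $|y|\le 2$.
\item From $F_3(\xi)=0$ we get $\rho(x,y)=z^2\ge0$, that is,
\[
(1-\psi(x,y))\,(9-(x+y)^2)\,(9-(x-y)^2)\ge 0 .
\]
\end{itemize}
With $|x|,|y|\le2$ we have $|x-y|\le 4$ and $|x+y|\le4$, but this does not yet place us in $D$. So we split by the signs of $a=9-(x+y)^2$ and $b=9-(x-y)^2$.

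\begin{itemize}
\item If $a>0$ and $b>0$, then $(x,y)\in\interior D$ (using $|x|,|y|\le 2$), and $\rho\ge0$ forces $\psi\le1$. Hence $(x,y)\in A$.
\item If $a=0$ or $b=0$, then $(x,y)$ lies on an edge of the square $[-2,2]^2$ cut by $|x\pm y|\le 3$. It is in $\partial D$, where $\psi_0=0$, so $\psi=0\le 1$ and $(x,y)\in A$.
\item The remaining case is $a<0$ or $b<0$ with $ab\ge0$ after accounting for the factor $(1-\psi)$. Since $|x|,|y|\le2$, at most one of $|x+y|,|x-y|$ can exceed $3$: if both did, then $\max(|x|,|y|)=(|x+y|+|x-y|)/2>3$. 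So exactly one of $a,b$ is negative, and $\rho\ge0$ then requires $1-\psi\le 0$.
\end{itemize}

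\emph{Main obstacle.} The remaining case needs $\psi<1$ in the corner regions $\{|x|,|y|\le2,\ |x\pm y|>3\}$, which would give a contradiction. For $\varphi\equiv1$ this is the Epstein--Vogt check: in a corner, $(4-x^2)(4-y^2)$ is small while $9-(x+y)^2\in[-7,0)$, so $\psi_0<0<1$. Indeed exactly one factor of $\psi_0$ is negative there, so $\psi_0<0$. For general $\varphi\ge0$ on $D$, the corners lie outside $D$, so we need $\varphi\ge0$ on the whole square $[-2,2]^2$, which gives $\psi=\psi_0\varphi\le0<1$ there. If the hypotheses only give $\varphi\ge0$ on $D$, I would either use that $\varphi$ can be taken nonnegative on $[-2,2]^2$ (for instance $\varphi\ge0$ everywhere in the later construction), or note this as the point where the generalization needs that mild extra assumption. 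With it, the corners are excluded and $\pi(M)\subset A$.

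\emph{Surjectivity.} Let $(x,y)\in A$.
\begin{enumerate}
\item Since $|x|,|y|\le2$, choose $u_1=\sqrt{4-x^2}$, $u_2=0$ and $w_1=\sqrt{4-y^2}$, $w_2=0$.
\item On $A\subset D$ we have $9-(x\pm y)^2\ge0$ and $\psi\le1$, so $\rho(x,y)\ge0$. Set $z=\sqrt{\rho(x,y)}$.
\end{enumerate}
Then $F(\xi)=0$ for this $\xi$, so $(x,y)\in\pi(M)$. This completes the argument, with the corner exclusion as the only delicate step.
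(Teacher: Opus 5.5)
Your proof is correct and follows the paper's argument. You bound $|x|,|y|\le 2$ from $F_1,F_2$; you show that in a corner region (say $x+y>3$) the condition $\rho\ge 0$ forces $\psi\ge 1$, while $\psi_0\le 0$ and $\varphi\ge 0$ force $\psi\le 0$; and you get surjectivity from the explicit preimage $u_1=\sqrt{4-x^2}$, $w_1=\sqrt{4-y^2}$, $z=\sqrt{\rho(x,y)}$.

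The extra assumption you flag is exactly what the paper uses tacitly when it invokes hypothesis (ii) at a point with $x+y>3$, i.e.\ outside $D$. It is genuinely needed: a sufficiently negative bump of $\varphi$ supported in a corner away from $D$ would make $\psi\ge 1$ there, and so put points of $M$ over it. Your small slips are harmless: $a,b>0$ only gives $(x,y)\in D$, not its interior, and in the corners $\psi_0\le 0$ rather than $<0$.
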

							\begin{proof}
								The present proof is similar to Lemma 4.1 on \cite{EV78}. Let $\xi=(x,y,u_{1},u_{2},w_{1},w_{2},z)\in M$. Since $F_{1}(\xi)=F_{2}(\xi)=F_{3}(\xi)=0$, obtain $$(\ast)\begin{cases}
									0&\leq u_{1}^{2}+u_{2}^{2}=4-x^{2},\\
									0&\leq w_{1}^{2}+w_{2}^{2}=4-y^{2},\\
									0&\leq z^{2}=\rho(x,y),
								\end{cases}$$ so that $-2\leq x\leq 2$ and $-2\leq y\leq 2$. Moreover, \begin{align*}
									0\leq \rho(x,y)(4-x^{2})(4-y^{2})=(1-\psi(x,y))\cdot \psi_{0}(x,y).
								\end{align*} Since \(\psi=\psi_0\cdot \varphi\) and \(\varphi\ge0\) on \(D\) by  \ref{phi eh positiva}, we obtain
								\[
								0\le (1-\psi(x,y))\psi_0(x,y)\varphi(x,y)=(1-\psi(x,y))\psi(x,y).
								\] Hence $0\leq \psi(x,y)\leq 1$.

								Suppose $(x,y)\notin A$. Because $\psi_{0}$ is invariant by the symmetries $x\mapsto -x$ and $y\mapsto -y$, and since we required the same for $\varphi$ in \ref{phi eh simetrica}, the product \(\psi=\psi_0\cdot \varphi\) will also be invariant. Hence, without loss of generality we may assume \(x,y\ge0\).
								
								Then, by $(\ast)$, we have $x+y>3$, together with $0\leq x\leq 2$ and $0\leq y\leq 2$. This implies $1\leq x\leq 2$ and $1\leq y\leq 2$, and consequently, $-1\leq x-y\leq 1$.
								
								Since $\rho(x,y)\geq 0$, $(9-(x+y)^{2})<0$, and $(9-(x-y)^{2})>0$, we have: 
								$$1-\psi(x,y)\leq 0,$$ i.e., $1\leq \psi(x,y)$. Combining this with the previous bound \(\psi(x,y)\le1\) yields \(\psi(x,y)=1\).
								
								On the other hand, we claim that if $\xi\in M$ and $x+y>3$, then $\psi(x,y)\leq 0$. Indeed, $\varphi(x,y)\geq 0$ by hypothesis \ref{phi eh positiva}; $(\ast)$ implies $(4-x^{2})\geq 0$ and $(4-y^{2})\geq 0$; finally, as we just saw, $(9-(x+y)^{2})<0$ and $(9-(x-y)^{2})>0$, so: $$\psi(x,y)=\psi_{0}(x,y)\varphi(x,y)=(4-x^{2})(4-y^{2})(9-(x+y)^{2})(9-(x-y)^{2})\cdot \varphi(x,y)\leq 0,$$ i.e., we just proved that if $(x,y)\notin A$ then $1= \psi(x,y)\leq 0$, a contradiction. This proves that $\proj(M)\subset A$.
								
								Conversely, let $(x,y)\in A$. To show the existence of $\xi\in M$ which projects to $(x,y)$ it suffices to define $z=\sqrt{\rho(x,y)}, u_{1}=\sqrt{4-x^{2}},u_{2}=0,w_{1}=\sqrt{4-y^{2}},$ and $w_{2}=0$, proving that $\proj(M)=A$.
							\end{proof}
							
							We now study the regularity of the algebraic variety $M$.
							
							\begin{lemma} \label{lemma: M eh variedade pra psi}
								The map $F\colon \R^{7}\to \R^{3}$ is regular on $M=F^{-1}(0)$. Consequently, \(M\) is a smooth \(4\)-dimensional submanifold of \(\R^7\).
							\end{lemma}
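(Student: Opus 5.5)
We need to show that $dF_\xi$ has rank $3$ at every $\xi\in M$. The approach is to compute the Jacobian explicitly and, according to which auxiliary coordinates vanish, exhibit a nonzero $3\times3$ minor. The gradients are
\begin{align*}
\nabla F_1&=(2x,0,2u_1,2u_2,0,0,0),\\
\nabla F_2&=(0,2y,0,0,2w_1,2w_2,0),\\
\nabla F_3&=(-\partial_x\rho,-\partial_y\rho,0,0,0,0,2z).
\end{align*}
The three blocks of variables $(u_1,u_2)$, $(w_1,w_2)$ and $z$ each appear in only one equation. Hence, if $(u_1,u_2)\ne0$, $(w_1,w_2)\neq0$ and $z\ne0$, the minor in these columns is already nonzero. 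So we only need to treat the degenerate cases $u=0$ (i.e.\ $x=\pm2$), $w=0$ (i.e.\ $y=\pm2$) and $z=0$ (i.e.\ $\rho(x,y)=0$), and their combinations. By Lemma~\ref{lemma: M see projeta sobre A versao psi}, $(x,y)\in A$ in all cases.

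\textbf{Case $z=0$ with $u,w\ne0$.} Here $|x|<2$ and $|y|<2$, and we need $\nabla\rho(x,y)\neq0$ whenever $\rho(x,y)=0$. On $A$ with $|x|,|y|<2$, the zeros of $\rho$ come from two sources:
\begin{itemize}
\item the circle $\{\psi=1\}$, where $\nabla\rho=-\nabla\psi\cdot(9-(x+y)^2)(9-(x-y)^2)$. This is nonzero because $\nabla\psi\ne0$ by \ref{psi é submersao}, since this circle lies in $\interior D\setminus\{0\}$, and because the two octagon factors are positive there (as $\psi>0$ forces the point into $\interior D$);
\item the diagonal edges $x\pm y=\pm3$, where $\psi=0$. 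Since $1-\psi=1$ there, $\nabla\rho$ equals the gradient of the vanishing linear factor times nonzero quantities. The two diagonal factors vanish simultaneously only at points outside $D$.
\end{itemize}
In either situation some entry of $\nabla F_3$ in the $x$- or $y$-column is nonzero. Using that column together with the $u$- and $w$-columns gives a nonzero minor.

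\textbf{Cases $x=\pm2$ or $y=\pm2$.} If $u=0$, then $x=\pm2$ and $\nabla F_1$ has only the entry $\pm4$ in the $x$-column. Similarly, $w=0$ gives $y=\pm2$ and a lone $\pm4$ in the $y$-column. By the symmetry \ref{phi eh simetrica}, I may assume $x=2$. On the edge $x=2$ we have $\psi=0$ and $|y|\le1$ (the diagonals cut at $|y|=1$), so $\rho=(9-(2+y)^2)(9-(2-y)^2)$.
\begin{itemize}
\item For $|y|<1$, $\rho>0$, so $z\neq0$ and the $x$-, $w$- and $z$-columns give a nonzero minor.
\item For $|y|=1$, we are at an octagon vertex, where $z=0$ but $y\ne\pm2$, so $w\neq0$. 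We then need $\nabla F_3$ not to be proportional to $\nabla F_1$ in the remaining columns. Concretely, we need $\partial_y\rho\ne0$ there, after subtracting multiples of the rows for $F_1$ and $F_2$. Since the $F_2$ row involves $w$-columns, it is cleaner to use the columns $x$, $y$ and $w_i$ with $w_i\neq0$: then the minor is $\pm4\cdot 2w_i\cdot(-\partial_y\rho)$ up to sign, as the $F_3$ row has zero $w$-entry. So it suffices that $\partial_y\rho\ne0$ at $(2,\pm1)$, which follows by differentiating the product: exactly one factor vanishes, and its $y$-derivative is nonzero.
\end{itemize}
The case $x=y=\pm2$ does not occur, since $|x+y|\le3$.

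\textbf{Main obstacle.} The crux is the case analysis at the corners where several degeneracies meet: the octagon vertices and points of $\{\psi=1\}$. It requires checking that $\nabla\rho$ has a nonzero component in a direction not already used by $F_1$ or $F_2$. This depends on the positivity of the remaining factors and on \ref{psi é submersao}; hypothesis \ref{phi eh positiva} handles the possible zeros of $\varphi$ inside $D$, since only $\psi>0$ on $\interior D$ is needed.

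With regularity in hand, the regular value theorem gives that $M$ is a smooth submanifold of dimension $7-3=4$. It is compact because it is closed and bounded: its $(x,y)$-projection lies in $A$, and the other coordinates are bounded by the equations.
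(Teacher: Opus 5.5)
Your proof is correct and is essentially the paper's argument with different bookkeeping: both reduce to the points with $z=0$, get $\nabla\rho\neq0$ on $\{\psi=1\}$ from condition (iii) on $\varphi$ and on the diagonal edges from the vanishing linear factor, and then use that $(u_1,u_2)$ and $(w_1,w_2)$ cannot vanish simultaneously on $M$. The paper avoids your separate edge and vertex analysis by noting that on $x+y=3$ one has $\partial_x\rho=\partial_y\rho\neq0$, so a dependence of the third row would force $u=w=0$.

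One small slip: condition (iv) only gives the reflections $x\mapsto -x$ and $y\mapsto -y$, not the swap $x\leftrightarrow y$, and $\varphi$ need not be swap-symmetric. So the case $y=\pm2$ is not reduced to $x=2$ by symmetry. The mirrored argument works verbatim, however, since $\psi\equiv0$ on those edges and $\rho(x,\pm2)=(9-(x+2)^2)(9-(x-2)^2)$ does not involve $\varphi$.
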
\begin{proof}
								At a point $\xi=(x,y,u_{1},u_{2},w_{1},w_{2},z)\in M$, the Jacobian of $F$ is $$dF_{\xi}=\begin{bmatrix}
									2x&0&2u_{1}&2u_{2}&0&0&0\\
									0&2y&0&0&2w_{1}&2w_{2}&0\\
									-\partial \rho/\partial x&-\partial \rho/\partial y&0&0&0&0&2z
								\end{bmatrix}.$$ We will prove that $F$ is a submersion by showing that its Jacobian has rank $3$.
								
								We start by observing that neither of the two first rows can be zero on $M$: for example, $u_{1}^{2}+u_{2}^{2}=4-x^{2}$ implies that $(x,u_{1},u_{2})\neq 0$ (and similarly, $w_{1}^{2}+w_{2}^{2}=4-y^{2}$ implies that $(y,w_{1},w_{2})\neq 0$). So the Jacobian has rank 2 or 3. Let's prove it is 3. 
								
								For the sake of contradiction, assume that the third row is not independent of the first two. Then $z=0$ and so $\rho(x,y)=0$. We have two cases to deal with:
								
								\begin{itemize}   
									\item \textit{\textbf{Case 1}}: $(9-(x+y)^{2})(9-(x-y)^{2})\neq 0$. Then $\psi(x,y)=1$ and so 
									$$d\rho=-d\psi\cdot (9-(x+y)^{2})(9-(x-y)^{2}),$$
									which is non-zero by hypothesis \ref{psi é submersao}. Also, since $\psi(x,y)=1$, $4-y^{2}\neq0\neq 4-x^{2}$ and so both $u_{1}^{2}+u_{2}^{2}$ and $w_{1}^{2}+w_{2}^{2}$ must be non-zero. This implies that the first two rows are independent of the third - a contradiction.

									\item \textit{\textbf{Case 2}}: $(9-(x+y)^{2})(9-(x-y)^{2})=0.$ Hence, $\psi(x,y)=0$ (since $\psi_{0}(x,y)=0$). Applying the symmetries $x\mapsto -x$ and $y\mapsto -y$ if necessary\footnote{We recall that by hypothesis \ref{phi eh simetrica} we may assume it for $\psi$.}, we may assume that $x+y=3$. Moreover,
									$$d\rho=(1-\psi(x,y))\cdot (-2(x+y)(dx+dy))\cdot (9-(x-y)^{2}),$$
									and since $\psi(x,y)=0$ and $x+y=3$, we have $$d\rho=-6(dx+dy)(9-(x-y)^{2})=6(dx+dy)((x-y)^{2}-9).$$ Also, since $(x,y)\in A$, the fact that $x+y=3$ implies that $(x-y)^{2}-9\neq 0$. So, $\frac{\partial \rho}{\partial x}=\frac{\partial \rho}{\partial y}\neq 0$. Then the third row is independent of the first two if $u_{1}=u_{2}=w_{1}=w_{2}=0$, i.e., $4-x^{2}=0$ and $4-y^{2}=0$, which cannot occur simultaneously for $(x,y)\in A$. This proves the rank must be 3.
								\end{itemize}
							\end{proof}
							
							\begin{rmk}\label{rmk: variedades se intersectam transversalmente}
								We have actually proved that $M$ is the transverse intersection of the following hypersurfaces: 
								
								\[S_{1}\coloneqq \{(x,y,u_{1},u_{2},w_{1},w_{2},z)\in \R^{7}\mid u_{1}^{2}+u_{2}^{2}+x^{2}=4\}, \]
								\[S_{2}\coloneqq	\{(x,y,u_{1},u_{2},w_{1},w_{2},z)\in \R^{7}\mid w_{1}^{2}+w_{2}^{2}+y^{2}=4\},\]
								and 
								$$S_{3}\coloneqq\{(x,y,u_{1},u_{2},w_{1},w_{2},z) \in \R^{7}\mid z^{2}=\rho(x,y) \text{ and } (x,y) \notin \{(0,3),(0,-3),(3,0),(-3,0)\} \} .$$
							\end{rmk}
							
							\begin{lemma}\label{lemma: M e compact pra phi}
								The $4$-dimensional manifold $M$ is compact.
							\end{lemma}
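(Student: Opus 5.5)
We show that $M$ is closed and bounded in $\R^7$, and conclude by Heine--Borel.

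\emph{Closedness.} Since $\varphi$ is $C^\infty$, the map $F$ is continuous. Hence $M=F^{-1}(0)$ is the preimage of a closed point, so it is closed in $\R^7$.

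\emph{Boundedness.} By Lemma \ref{lemma: M see projeta sobre A versao psi}, every $\xi=(x,y,u_1,u_2,w_1,w_2,z)\in M$ has $(x,y)\in A\subset D$, so $|x|,|y|\le 2$. The equations $F_1(\xi)=F_2(\xi)=0$ give
\[
u_1^2+u_2^2=4-x^2\le 4 \qquad\text{and}\qquad w_1^2+w_2^2=4-y^2\le 4,
\]
so $|u_i|,|w_i|\le 2$. Finally, $F_3(\xi)=0$ gives $z^2=\rho(x,y)$. The function $\rho=(1-\psi)(9-(x+y)^2)(9-(x-y)^2)$ is continuous on $\R^2$, hence bounded on the compact set $D$ by some constant $R$. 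Therefore $|z|\le\sqrt{R}$.

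Altogether, $M$ lies in a box in $\R^7$, so it is bounded. Being closed and bounded, it is compact. Lemma \ref{lemma: M eh variedade pra psi} already gives that $M$ is a smooth $4$-dimensional submanifold, so $M$ is a compact $4$-manifold.

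The only step that needs any care is bounding $z$. This rests on the projection lemma, which confines $(x,y)$ to the compact set $D$ and so lets us bound $\rho$ there. Everything else is immediate.
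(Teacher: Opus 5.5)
Your proof is correct and follows the paper's argument: $M=F^{-1}(0)$ is closed, and it is bounded because its projection to the $(x,y)$-plane is $A$ while the defining equations control the remaining coordinates. As a minor aside, the projection lemma is not actually needed, since $F_1=F_2=0$ already force $|x|,|y|\le 2$ and $\rho$ is bounded on $[-2,2]^2$.
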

							\begin{proof} $M$ is closed because it is the preimage of the point $0\in\R^3$ under $F$ and bounded since its projection onto the first two coordinates is $A$ and the other variables satisfy $(\ast)$. 
							\end{proof}
							
							\subsection{Construction of the vector field}
							
							We will find the equations on each of the seven variables of $M$. Over $A$ we take the Hamiltonian flow with $H=\frac{\psi^{2}}{2}$. Therefore the associated Hamiltonian system on the \((x,y)\)-plane is

							\begin{equation} \label{def: eqs de Hamilton no anel}
								\begin{cases}
									\dot{x}=\psi\cdot \frac{\partial \psi}{\partial y}\\
									\dot{y}=-\psi\cdot \frac{\partial \psi}{\partial x}
								\end{cases}.
							\end{equation} 
							Hence $\psi$ is constant along the orbits and $(\dot{x},\dot{y})$ is zero on $M$ if and only if $\psi=0$. This guarantees that the orbits in the $(x,y)$-plane are simple closed curves in $A$, except when $\psi=0$. To determine $\dot{z}$ differentiate $z^{2}=\rho(x,y)$: 
							$$2z\dot{z}=\frac{d}{dt}\rho(x,y).$$
							Differentiating the logarithm of $\rho(x,y)=(1-\psi(x,y))(9-(x+y)^{2})(9-(x-y)^{2})$, we see that the right-hand side of the above equality becomes $$2\rho(x,y)\sigma(x,y),$$ where \begin{equation}\label{def: sigma}\sigma(x,y)\coloneqq \left(\frac{\partial \psi}{\partial x}-\frac{\partial \psi}{\partial y}\right)\cdot \frac{(x+y)\cdot \psi(x,y)}{9-(x+y)^{2}}+\left(\frac{\partial \psi}{\partial x}+\frac{\partial \psi}{\partial y}\right)\cdot \frac{(y-x)\cdot \psi(x,y)}{9-(x-y)^{2}}.\end{equation}
							
							Indeed, \begin{align*}
								\frac{d}{dt} \log{\rho(x,y)}&= \frac{d}{dt} \left(\log{(1-\psi)}+\log{(9-(x+y)^{2})}+\log{(9-(x-y)^{2})}\right)\\&=\frac{1}{1-\psi}\cdot \left(-\dot{\psi}\right)-\frac{2(x+y)(\dot{x}+\dot{y})}{9-(x+y)^{2}}-\frac{2(x-y)(\dot{x}-\dot{y})}{9-(x-y)^{2}}\\&=\frac{2(x+y)\psi}{9-(x+y)^{2}}\cdot \left( \frac{\partial \psi}{\partial x}- \frac{\partial \psi}{\partial y}\right)+\frac{2(y-x)\psi}{9-(x-y)^{2}}\cdot \left( \frac{\partial \psi}{\partial x}+\frac{\partial \psi}{\partial y}\right),
							\end{align*} so that \begin{equation}\label{eq: sigma log de rho} \frac{d}{dt}\log{\rho(x,y)}=\frac{1}{\rho(x,y)}\cdot \frac{d}{dt}\rho(x,y)=2\cdot \sigma(x,y),\end{equation} and then $$\frac{d}{dt}\rho(x,y)=2\rho(x,y)\sigma(x,y).$$ Together with $z^{2}=\rho(x,y)$, this leads us to define: 
							
							\begin{equation}\label{def:dot-z}
								\dot{z}=z\cdot \sigma(x,y).
							\end{equation}
							
							We now define the dynamics in the \(u\)- and \(w\)-planes. Begin with the $u$-plane: we would like to have a circular motion about the center. Such a flow is given by $$\begin{cases}
								\dot{u}_{1}=-pu_{2}\\
								\dot{u}_{2}=pu_{1}
							\end{cases},$$ where $p=\dot{\theta}$ is the angular velocity. 
							
							However, the radius of the circle in the $u$-plane must simultaneously change with time, since $$\frac{d}{dt}(u_{1}^{2}+u_{2}^{2})=-2x\dot{x}=-2x\psi\cdot \frac{\partial \psi}{\partial y}.$$ Then we should have a set of equations of the form: $$\begin{cases}
								\dot{u}_{1}=Ku_{1}-pu_{2}\\
								\dot{u}_{2}=pu_{1}+Ku_{2}
							\end{cases},$$ where $p$ and $K$ are functions on $(x,y)$. So if we want $u_{1}$ and $u_{2}$ to satisfy these equations, we must have: \begin{align*}
								\frac{d}{dt}(u_{1}^{2}+u_{2}^{2})&=2u_{1}\dot{u}_{1}+2u_{2}\dot{u}_{2}\\
								&=2u_{1}(Ku_{1}-pu_{2})+2u_{2}(pu_{1}+Ku_{2})\\&=2K(u_{1}^{2}+u_{2}^{2})\\&=2K(4-x^{2}), 
							\end{align*} and hence $$2K(4-x^{2})=-2x\psi\cdot \frac{\partial \psi}{\partial y}.$$ Since $\psi(x,y)=\varphi\cdot (4-x^{2})(4-y^{2})(9-(x+y)^{2})(9-(x-y)^{2})$, we set $$K(x,y)=-x\cdot \varphi \cdot \left(\frac{\partial \psi}{\partial y}\right)\cdot (4-y^{2})(9-(x+y)^{2})(9-(x-y)^{2}).$$ Similarly, $w_{1},w_{2}$ must satisfy $$\begin{cases}
								\dot{w}_{1}=Lw_{1}-qw_{2}\\
								\dot{w}_{2}=qw_{1}+Lw_{2}
							\end{cases},$$ where $q$ and $L$ are functions on $(x,y)$, and so we set $$L(x,y)=y\cdot \varphi \cdot \left(\frac{\partial \psi}{\partial x}\right)\cdot (4-x^{2})(9-(x+y)^{2})(9-(x-y)^{2}).$$ 
							
							We now choose the angular speeds \(p,q:\R^2\to\R\).  We require:
						   \begin{enumerate}[label=\((\roman*)\)]
								\item \label{p e positivo} $p(x,y)>0$ if $0<y\leq 2$;
								\item \label{q e positivo} $q(x,y)>0$ if $0<x\leq 2$;
								\item \label{p e simetrico} $p(x,y)=p(-x,y)=-p(x,-y)$;
								\item \label{q e simetrico} $q(x,y)=-q(-x,y)=q(x,-y)$;
								\item \label{p=q} $p(x,y)=q(x,y)$ at $x+y=3$.        
							\end{enumerate} For example, we can set $p$ and $q$ as: $p(x,y)=(9+x^{2}-y^{2})y$, and $q(x,y)=(9-x^{2}+y^{2})x$.

							Define the vector field $X$ by: \begin{align*}X(\xi)=\psi \left(\frac{\partial\psi}{\partial y}\right)\frac{\partial}{\partial x}-\psi \left(\frac{\partial\psi}{\partial x}\right)\frac{\partial}{\partial y}+&(Ku_{1}-pu_{2})\frac{\partial}{\partial u_{1}}+(Ku_{2}+pu_{1})\frac{\partial}{\partial u_{2}}+\\+&(Lw_{1}-qw_{2})\frac{\partial}{\partial w_{1}}+(Lw_{2}+qw_{1})\frac{\partial}{\partial w_{2}}+z\sigma(x,y)\frac{\partial}{\partial z},\end{align*} for each $\xi=(x,y,u_{1},u_{2},w_{1},w_{2},z)\in \R^{7}$. By construction, we have: 
							
							\begin{lemma} \label{lemma: X e tangent a M} 
								If $\xi \in M$, then $X(\xi)$ is tangent to $M$.
							\end{lemma}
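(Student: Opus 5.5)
Since $M=F^{-1}(0)$ and $F$ is a submersion along $M$ by Lemma~\ref{lemma: M eh variedade pra psi}, the tangent space at $\xi\in M$ is $T_\xi M=\ker dF_\xi$. So the plan is to show that $dF_\xi(X(\xi))=0$, that is, $X(F_i)(\xi)=0$ for $i=1,2,3$ at every $\xi\in M$. Each component is a direct differentiation along $X$; we then use the defining relations of $M$ to see that the result vanishes.

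For $F_1=u_1^2+u_2^2-4+x^2$, the rotation terms cancel and
\[
X(F_1)=2u_1(Ku_1-pu_2)+2u_2(Ku_2+pu_1)+2x\,\psi\,\partial_y\psi=2K(u_1^2+u_2^2)+2x\psi\,\partial_y\psi .
\]
On $M$ we have $u_1^2+u_2^2=4-x^2$. Using $\psi=\varphi(4-x^2)(4-y^2)(9-(x+y)^2)(9-(x-y)^2)$ together with the definition of $K$, we get $K(4-x^2)=-x\psi\,\partial_y\psi$, so $X(F_1)=0$. The computation for $F_2$ is identical, with $L(4-y^2)=y\psi\,\partial_x\psi$ cancelling $2y\dot y=-2y\psi\,\partial_x\psi$.

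For $F_3=z^2-\rho(x,y)$ we get $X(F_3)=2z^2\sigma-\dot\rho$, where $\dot\rho=\partial_x\rho\,\dot x+\partial_y\rho\,\dot y$ is computed with $\dot x=\psi\partial_y\psi$ and $\dot y=-\psi\partial_x\psi$. This is the main obstacle. The paper's identity $\dot\rho=2\rho\sigma$ came from logarithmic differentiation, which is only valid where $\rho\neq0$. It therefore has to be rechecked as a polynomial or smooth identity. I would expand by the product rule:
\[
\dot\rho=-\dot\psi\,(9-(x+y)^2)(9-(x-y)^2)+(1-\psi)\frac{d}{dt}\bigl[(9-(x+y)^2)(9-(x-y)^2)\bigr].
\]
Here $\dot\psi=0$ identically, since $\psi$ is a first integral of the Hamiltonian system \eqref{def: eqs de Hamilton no anel}. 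Expanding the second term gives
\[
\dot\rho=2\rho\left(\frac{(x+y)\psi(\partial_x\psi-\partial_y\psi)}{9-(x+y)^2}+\frac{(y-x)\psi(\partial_x\psi+\partial_y\psi)}{9-(x-y)^2}\right)=2\rho\sigma .
\]
Each fraction times $\rho$ is smooth, because the corresponding factor of $\rho$ cancels the denominator, so this identity holds on all of $\mathbb R^2$.

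It remains to address smoothness of $\sigma$ itself, since $X$ must be a genuine smooth vector field. The factor $\psi$ contains $(9-(x\pm y)^2)$, so for $\varphi$ smooth $\psi/(9-(x+y)^2)$ is smooth, and likewise for the other term. Hence $\sigma$ is smooth. Substituting $z^2=\rho$ on $M$ then gives $X(F_3)=2\rho\sigma-2\rho\sigma=0$, which concludes the proof.
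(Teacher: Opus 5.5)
Your proposal is correct and follows the paper's route: the paper likewise reduces tangency to $dF_\xi(X(\xi))=0$ using the regularity of $F$ on $M$, and leaves the verification as a routine calculation, which you carry out component by component. You also check that $\dot\rho=2\rho\sigma$ holds as a genuine smooth identity, not only where $\rho\neq0$ as the logarithmic derivation would suggest, and that $\sigma$ is smooth; this fills in that routine step.
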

							\begin{proof}
							Since $F$ is regular on $M$ it suffices to prove that, for every $\xi \in M$, $dF_{\xi}(X(\xi))=0$, which is a routine calculation.
							\end{proof}

							\subsubsection{Properties of $X$}
							
							In this subsection we study properties of the vector field \(X\). Our goal is to show that \(X\) is a smooth, nowhere-vanishing vector field tangent to \(M\), such that every orbit is diffeomorphic to a circle, and that the periods can be arbitrarily large.

							\begin{lemma} \label{lemma: X nunca e zero em M}
								The vector field $X$ does not vanish on $M$.
							\end{lemma}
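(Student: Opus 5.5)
We argue by contradiction. Suppose $X(\xi)=0$ at some $\xi=(x,y,u_1,u_2,w_1,w_2,z)\in M$. The $\partial_x,\partial_y$ components give $\psi\,\partial_y\psi=\psi\,\partial_x\psi=0$. By property \ref{psi é submersao}, $d\psi\neq0$ on $\interior D\setminus\{0\}$, and by Lemma~\ref{lemma: M see projeta sobre A versao psi} the point $(x,y)$ lies in $A$, which does not contain $0$ since $\psi(0)>1$. So in the interior of $A$ we have both $\psi\neq 0$ and $d\psi\neq 0$, which is impossible. Hence $\psi(x,y)=0$ and $(x,y)\in\partial D$, i.e.\ $(x,y)$ lies on the octagon. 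On this set $\psi=0$, so $K=L=0$ (both are multiples of $\varphi\,\partial\psi$ times factors, and in fact $\psi\cdot(\cdot)$-type terms vanish since $\sigma$ contains the factor $\psi$). Thus the remaining components reduce to $\sigma=0$, $\dot u=p(-u_2,u_1)$ and $\dot w=q(-w_2,w_1)$.

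Vanishing of $X$ therefore requires $p\,(u_1,u_2)=0$ and $q\,(w_1,w_2)=0$. The plan is to check, side by side of the octagon, that this cannot happen, using $u_1^2+u_2^2=4-x^2$, $w_1^2+w_2^2=4-y^2$, the symmetries \ref{p e simetrico}, \ref{q e simetrico}, and the sign conditions \ref{p e positivo}, \ref{q e positivo}. By the symmetries $x\mapsto -x$, $y\mapsto -y$ we may take $x,y\ge0$.

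\textbf{Side $x=2$, $0\le y\le1$.} Here $u=0$ but $w\neq0$ since $y<2$, and $q(2,y)>0$ by \ref{q e positivo}, so the $w$-component is nonzero.

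\textbf{Side $y=2$.} This case is symmetric, using $p>0$ and $u\neq0$.

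\textbf{Diagonal $x+y=3$, $1\le x,y\le2$.} If $x<2$ then $u\neq0$ and $p(x,y)>0$ since $y>0$. At $x=2$ we have $y=1$, so $w\neq0$ and $q>0$.

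In every case some rotational component is nonzero. The one delicate point is confirming that $\sigma$ and $K,L$ are genuinely smooth and vanish on $\partial D$ (the denominators $9-(x\pm y)^2$ in $\sigma$ vanish on the diagonal sides). However, we never need the $z$-component to be nonzero, only one of the $u$- or $w$-components. So it suffices to note that $\sigma$ extends smoothly; this follows because $\psi$ contains the factors $9-(x\pm y)^2$, which cancel the denominators. No further obstacle is expected.
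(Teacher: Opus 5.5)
Your proof is correct, but it takes a different route from the paper's. The paper never uses the $(x,y)$-components of $X$. If $\dot u_1=\dot u_2=0$, then $0=u_1\dot u_2-u_2\dot u_1=p\,(u_1^2+u_2^2)=p\,(4-x^2)$, so $(x,y)\in\{y=0\}\cup\{x=\pm2\}$. Likewise $\dot w_1=\dot w_2=0$ forces $(x,y)\in\{x=0\}\cup\{y=\pm2\}$. These two sets meet only at the origin and at the corners $(\pm2,\pm2)$, none of which lies in $A$.

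You instead first use the Hamiltonian components, the nondegeneracy of $d\psi$ on $\interior D\setminus\{0\}$ and $0\notin A$ to push a putative zero onto the octagon $\partial D$, and only then check the rotational components side by side. That check is the paper's intersection argument restricted to $\partial D$, so your localization step is logically dispensable. What it buys is the picture that the planar field vanishes exactly on the octagon, where the motion is a pure rotation, as in the later propositions on boundary orbits. The cost is an extra step and a case analysis. The paper's version is shorter and needs the nondegeneracy of $d\psi$ only through the fact that $0\notin A$.

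Two minor points. First, the identity $u_1\dot u_2-u_2\dot u_1=p\,(u_1^2+u_2^2)$, or equivalently $\dot u_1^2+\dot u_2^2=(K^2+p^2)(u_1^2+u_2^2)$, makes your claim $K=L=0$ superfluous. Your parenthetical justification of that claim, via the factor $\psi$ in $\sigma$, is beside the point. In every case you actually use, $K$ or $L$ vanishes because one of its explicit factors $4-x^2$, $4-y^2$ or $9-(x+y)^2$ does. Second, in the first step the contradiction should be stated on $\interior D\setminus\{0\}$ rather than on the interior of $A$. The inner circle $\{\psi=1\}\subset\partial A$ also has $\psi\neq0$ and is handled by the same hypothesis.
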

							\begin{proof}  
								Suppose $\dot{u}_{1}=\dot{u}_{2}=0$. Then  
								\begin{align*}
									0&=u_{1}(Ku_{2}+p(x,y)\cdot u_{1})-u_{2}(Ku_{1}-p(x,y)\cdot u_{2})
									\\&=p(x,y)\cdot (u_{1}^{2}+u_{2}^{2}) = p(x,y)\cdot (4-x^2)\; .
								\end{align*}
								As $p=0$ on $A$ only at $\{y=0\}$, this implies that $(x,y)$ belongs to $\{y=0\}\cup \{ x= \pm2\}$.  Likewise, if $\dot{w}_{1}=\dot{w}_{2}=0$, the point $(x,y)$ belongs to $\{x=0\}\cup \{ y= \pm2\}$.  As the intersection
								\[ ( \{ x= \pm2\}\cup \{y=0\})\cap (\{x=0\}\cup \{ y= \pm2\})\]
								does not meet $A$, by Lemma  \ref{lemma: M see projeta sobre A versao psi}, the vector field $X$ never vanishes.
							\end{proof}
							
							To prove that the orbits of the flow are circles, we will prove, in two lemmas, that the vector field $X$ respects several symmetries.
							
							\begin{lemma} \label{lemma: invariancia por rotacao em u e em w}
								The functions \(\psi,\rho,\sigma\), the map \(F\), and the vector field \(X\) are invariant under rigid rotations in the \(u\)-plane and under rigid rotations in the \(w\)-plane.
					    	\end{lemma}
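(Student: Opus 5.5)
The plan is a direct verification. Fix an angle $\theta$ and let $R_\theta$ denote the rotation of the $u$-plane,
\[
(u_1,u_2)\mapsto(\cos\theta\, u_1-\sin\theta\, u_2,\ \sin\theta\, u_1+\cos\theta\, u_2),
\]
extended to $\R^7$ by the identity on the coordinates $x,y,w_1,w_2,z$. The rotation $S_\theta$ of the $w$-plane is defined in the same way. The argument has two parts.

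First, the scalar functions. By construction, $\psi=\psi_0\cdot\varphi$, $\rho$ and $\sigma$ depend only on $(x,y)$; see \eqref{psi_0} and \eqref{def: sigma}. Since $R_\theta$ and $S_\theta$ fix $(x,y)$, these functions are invariant. The same holds for $K,L,p,q$. For the map $F$:
\begin{itemize}
\item $F_3$ involves only $z$ and $(x,y)$, so it is invariant.
\item $F_1$ depends on $u$ only through $u_1^2+u_2^2$, which $R_\theta$ preserves, and it does not involve $w$.
\item $F_2$ is handled symmetrically.
\end{itemize}
Hence $F\circ R_\theta=F=F\circ S_\theta$. In particular, both rotations preserve $M$.

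Second, the vector field. Invariance of $X$ means equivariance:
\[
(dR_\theta)X(\xi)=X(R_\theta\xi).
\]
Since $R_\theta$ is linear, $dR_\theta=R_\theta$. Decompose $X$ into blocks. The $(x,y)$-block and the $z$-block depend only on $(x,y,z)$, and $R_\theta$ acts trivially on them, so they are unchanged. The $w$-block depends only on $(x,y,w)$ and is also untouched. The only real check is the $u$-block, which equals $A(x,y)u$ with
\[
A=\begin{pmatrix}K&-p\\ p&K\end{pmatrix}=K\,\Id+p\,J,
\]
where $J$ is rotation by $\pi/2$. Because $A$ is a combination of the identity and $J$, it commutes with every planar rotation. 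Therefore
\[
R_\theta(Au)=A(R_\theta u),
\]
with $A$ evaluated at the same $(x,y)$, because $R_\theta$ does not change $(x,y)$. This gives equivariance in the $u$-plane. The same argument with $L\,\Id+q\,J$ gives equivariance under $S_\theta$.

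There is no real obstacle; the only point needing care is the meaning of ``invariant'' for a vector field. It means $X$ commutes with the rotation as above, which is exactly what guarantees that the flow of $X$ commutes with the rotations. Conceptually, the key step is the commutation of $K\,\Id+pJ$ with $R_\theta$; everything else is the observation that the remaining components involve only variables that the rotation fixes.
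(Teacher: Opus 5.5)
Your proof is correct and follows the same argument as the paper. The scalar functions and $F$ are invariant because they depend on the $u$- and $w$-variables only through $(x,y)$ or through $u_1^2+u_2^2$ and $w_1^2+w_2^2$. The key step for $X$ is that the $u$- and $w$-blocks are $K\,\Id+pJ$ and $L\,\Id+qJ$ (the paper's ``scalar multiples of rotations''), which commute with planar rotations.

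Your unpacking of invariance as $dR_\theta X(\xi)=X(R_\theta\xi)$, with a block-by-block check, is a slightly more explicit version of the paper's argument.
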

							\begin{proof}
								Since $\psi$, $\rho$, and $\sigma$ do not depend on $u_{1},u_{2},w_{1}$, and $w_{2}$, and since $F$ only depends on the norm of $(u_{1},u_{2})$ and $(w_{1},w_{2})$, it is clear that they remain unchanged by such rotations. To see that the vector field $X$ is also invariant, observe that the matrix $\begin{bmatrix}
									K&-p\\p&K
								\end{bmatrix}$ is a real multiple of a rotation since it is 0 or equal to $$\sqrt{K^{2}+p^{2}}\cdot \begin{bmatrix}
									\frac{K}{\sqrt{K^{2}+p^{2}}}&\frac{-p}{\sqrt{K^{2}+p^{2}}}\\\frac{p}{\sqrt{K^{2}+p^{2}}}&\frac{K}{\sqrt{K^{2}+p^{2}}}
								\end{bmatrix},$$ that depends only on $x$ and $y$. Similarly, we have that $\begin{bmatrix}
									L&-q\\q&L
								\end{bmatrix}$ is also a scalar multiple of a rotation. These matrices commute with the rigid rotations. The lemma follows.
							\end{proof}
							\begin{lemma} \label{lemma: invariancia pela involucao T pra psi}
								Let $T:\R^{7}\mapsto \R^{7}$ be the involution defined by $$T(x,y,u_{1},u_{2},w_{1},w_{2},z)=(-x,-y,u_{2},u_{1},w_{2},w_{1},z).$$
								
								Then $\psi$, $\rho$, $F$, $M$, and $X$ are invariant under $T$.
							\end{lemma}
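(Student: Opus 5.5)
The plan is a direct verification, term by term, using the symmetry hypotheses on $\varphi$, $p$ and $q$. Write $T(\xi)=\xi'$ with $x'=-x$, $y'=-y$, $u'_1=u_2$, $u'_2=u_1$, $w'_1=w_2$, $w'_2=w_1$, $z'=z$.

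\textbf{The functions.} By \ref{phi eh simetrica} we have $\varphi(-x,-y)=\varphi(x,y)$. Each factor of $\psi_0$ in \eqref{psi_0} is a function of $x^2$, $y^2$, $(x+y)^2$ or $(x-y)^2$, so it is invariant under $(x,y)\mapsto(-x,-y)$. Hence $\psi\circ T=\psi$. The same reasoning gives $\rho\circ T=\rho$.

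\textbf{The map $F$ and the manifold $M$.} The expressions $u_1^2+u_2^2$ and $w_1^2+w_2^2$ are invariant under the swaps, and $x^2$, $y^2$ are invariant under the sign change. Therefore $F\circ T=F$, and so $T(M)=M$.

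\textbf{The vector field.} What must be checked is $dT(X(\xi))=X(T\xi)$. Since $T$ is linear, $dT=T$. I would compare each component.

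First, set $\psi_x:=\partial\psi/\partial x$ and $\psi_y:=\partial\psi/\partial y$. Differentiating $\psi(-x,-y)=\psi(x,y)$ shows that $\psi_x$ and $\psi_y$ are odd under $(x,y)\mapsto(-x,-y)$. The $x$-component of $X(T\xi)$ is then $\psi\,\psi_y(-x,-y)=-\psi\,\psi_y(x,y)$. This equals the $x$-component of $T(X(\xi))$, namely $-\dot x$. The $y$-component works the same way.

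Next, $\sigma$ is even. In \eqref{def: sigma} each product has the form (odd)$\cdot$(odd)$\cdot$(even), since $(x+y)$ and $(y-x)$ are odd. Hence the $z$-component $z\sigma$ is preserved, as required because $z'=z$.

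For the $u$-plane, $K(x,y)=-x\varphi\,\psi_y\cdot(\text{even})$ is even, being a product of two odd factors. From \ref{p e simetrico}, $p(-x,-y)=-p(x,y)$. At $T\xi$ the $u_1$-component of $X$ is
\[
K u'_1-p(-x,-y)u'_2=Ku_2+pu_1 .
\]
This is exactly the $u_2$-component of $X(\xi)$, which is the $u_1$-component of $T(X(\xi))$. The $u_2$-component is symmetric. The $w$-plane is identical: $L$ is even, and $q$ is odd by \ref{q e simetrico}.

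\textbf{Main obstacle.} The only delicate point is the parity of $p$ and $q$ under the total inversion $(x,y)\mapsto(-x,-y)$. This comes from composing the two partial symmetries in \ref{p e simetrico}: $p(-x,-y)=p(x,-y)=-p(x,y)$, and likewise for $q$. It is precisely this sign that compensates for the coordinate swap, which reverses the rotation direction in the $u$- and $w$-planes. Everything else is bookkeeping of even and odd factors.
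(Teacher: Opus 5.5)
Your proposal is correct and follows essentially the same route as the paper: parity under $(x,y)\mapsto(-x,-y)$ for $\psi,\rho,F$, evenness of $K$ and $L$, and oddness of $p,q$ compensating the coordinate swaps in the $u$- and $w$-planes. The only small difference is in the $x$-, $y$- and $z$-components. You check these by explicit parity of $\psi_x,\psi_y,\sigma$, whereas the paper appeals to the Hamiltonian structure and to the lift to $\{z^2=\rho\}$; your direct check is slightly more complete, since it gives $T_*X=X$ on all of $\R^7$ rather than only on the surface.
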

							\begin{proof}
								We begin by observing that $\psi$, $\rho$, and $F$ are invariant under the symmetries $x\mapsto -x$ and $y\mapsto -y$. Since $\psi$ and $\rho$ do not depend on $u_{1},u_{2},w_{1}, w_{2}$, and since $F$ is invariant under $(u_{1},u_{2})\mapsto  (u_{2},u_{1})$ and $(w_{1},w_{2})\mapsto  (w_{2},w_{1})$, we conclude that they are  also $T$-invariant. Hence, for every $\xi \in M$, if $F(\xi)=0$, $F(T(\xi))=0$, proving that $M$ is also left invariant by $T$.

								Finally, we need to verify the analogous for $X$. First recall that that the $xy$-coordinate of $X$ is given by the Hamiltonian flow of $(x,y)\mapsto \frac{1}{2}\psi(x,y)^2$. As $\psi(-x,-y)=\psi(x,y)$, we obtain that the $xy$-coordinates of $X$ and $T_* X$ are the same. 
								Also $xyz$-coordinate of $X$ is the lifting of the $xy$-coordinates of $X$ to the surface $\{ z^2= \rho(x,y)\}$. As $\rho(-x,-y)= \rho(x,y)$, the $xyz$-coordinates of $X$ and $T_* X$ are the same.
								
								As $\psi(-x,y)= \psi(x,y)$, it holds $\partial_x\psi(-x,y) =-\partial_x\psi(x,y)$. Likewise $\psi(x,-y)= \psi(x,y)$ implies $\partial_y\psi(x,-y) =-\partial_y\psi(x,y)$. Hence $K(-x,-y)= K(x,y)$ and  $L(-x,-y)= L(x,y)$.
								
								 Next, by the hypothesis \ref{p e simetrico} and \ref{q e simetrico} on the functions $p$ and $q$, we conclude that $p$ and $q$ are anti-invariant. Hence, $$T_{\ast}\left((Ku_{1}-pu_{2})\frac{\partial}{\partial u_{1}}\right)=(Ku_{2}+pu_{1})\frac{\partial}{\partial u_{2}},$$ and $$T_{\ast}\left((pu_{1}+Ku_{2})\frac{\partial}{\partial u_{2}}\right)=(-pu_{2}+Ku_{1})\frac{\partial}{\partial u_{1}},$$ i.e.,$$T_{\ast}\left((Ku_{1}-pu_{2})\frac{\partial}{\partial u_{1}}+(pu_{1}+Ku_{2})\frac{\partial}{\partial u_{2}}\right)=(Ku_{1}-pu_{2})\frac{\partial}{\partial u_{1}}+(pu_{1}+Ku_{2})\frac{\partial}{\partial u_{2}}.$$ This proves $T_{\ast}X=X$, i.e., that $X$ is $T$-invariant. 
								\end{proof}
								
								\begin{prop} \label{prop: psi>0 orbitas circulos pra psi}
								On $M$, if $\psi>0$, then each orbit is (diffeomorphic to) a circle. As $\psi$ tends to zero, the period tends to infinity.
								\end{prop}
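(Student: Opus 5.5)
The plan is to follow the orbit of a point $\xi\in M$ with $\psi(\xi)=c>0$ through its projection to $A$, and to use the symmetries of Lemmas \ref{lemma: invariancia por rotacao em u e em w} and \ref{lemma: invariancia pela involucao T pra psi} to close it up. By Remark \ref{rmk: level sets are circles}, the projected $(x,y)$-motion stays on the circle $\gamma_c=\{\psi=c\}\cap A$. Since $\psi>0$ there, the Hamiltonian field $(\psi\,\partial_y\psi,-\psi\,\partial_x\psi)$ does not vanish on $\gamma_c$ by property \ref{psi é submersao}. Hence the projection is periodic with some period $\tau(c)$.

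First I show that the orbit in $M$ is also periodic. By the symmetry of $\psi$, the curve $\gamma_c$ is invariant under $(x,y)\mapsto(-x,-y)$, and since the Hamiltonian is symmetric this map commutes with the planar flow. Therefore after time $\tau(c)/2$ the point $(x,y)$ reaches $(-x,-y)$. The remaining coordinates are handled as follows.
\begin{itemize}
\item The $z$-coordinate solves the linear equation $\dot z=z\sigma$. Because $z^2=\rho(x,y)$ with $\rho(-x,-y)=\rho(x,y)$, and because $z$ keeps its sign whenever $\rho>0$, we get $z(\tau/2)=z(0)$. The degenerate case $\rho=0$ occurs only on $\psi=1$ or $\psi=0$, and I treat it separately.
\item The $u$- and $w$-components evolve by linear maps that commute with rotations. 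So the time-$\tau/2$ map sends $(u,w)$ to $(R_\alpha u, R_\beta w)$ for some angles $\alpha,\beta$; the moduli return because $x^2,y^2$ return.
\end{itemize}
Thus $\Phi^{\tau/2}(\xi)$ equals the rotation $R_{\alpha,\beta}$ of $T(\xi)$, up to the swap that $T$ introduces. Combining with $T$-invariance, $\Phi^{\tau}(\xi)=(R_{\alpha,\beta}T)^2\xi$. Since $T$ composed with a rotation is a reflection (order two) in each plane, $\Phi^\tau(\xi)=\xi$. This is exactly the Epstein--Vogt trick: the holonomy after half a turn is a reflection, so a full turn has trivial holonomy. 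The orbit is then a compact one-dimensional immersed invariant set, and since $X\neq0$ (Lemma \ref{lemma: X nunca e zero em M}) it is an embedded circle.

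For the period estimate, I will show that $\tau(c)\to\infty$ as $c\to0$. The speed along $\gamma_c$ equals $\psi|\nabla\psi|=c|\nabla\psi|$, which is bounded by $C c$ on the compact set $D$. Meanwhile the length of $\gamma_c$ stays bounded below, since $\gamma_c$ converges to the octagon $\partial D$. Hence $\tau(c)\ge \mathrm{length}(\gamma_c)/(Cc)\to\infty$. The period of the orbit in $M$ is a positive multiple of $\tau(c)$, in fact $\tau(c)$ itself or at least $\tau(c)/2$, so it also tends to infinity.

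The main obstacle is the holonomy computation: verifying that the composition of $T$ with the rotation part genuinely squares to the identity, with the correct orientation conventions. This requires using $p(-x,-y)=-p(x,y)$ and $K(-x,-y)=K(x,y)$ to conjugate the $u$-evolution along the second half-turn into the inverse of the first. I would write the $u$-flow as $u(t)=r(t)R_{\theta(t)}u(0)$ with $\theta'=p$, and check that $T$ conjugates it to $u\mapsto r R_{-\theta}$ on swapped coordinates.
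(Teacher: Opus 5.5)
Your proposal is correct and follows essentially the paper's route. The common steps are the half-turn property $\gamma(\tau/2)=-\gamma(0)$ of the projected Hamiltonian orbit, sign preservation of $z$, writing $\xi(\tau/2)=RST\xi(0)$ with rotations $R,S$ and using $(RST)^2=I$, and the length-over-speed bound $\tau(c)\ge \ell(\gamma_c)/(Cc)$ for the period. Three small points should be tightened:
\begin{itemize}
\item Your ``therefore'' for the half-turn needs the short argument of Lemma~\ref{lemma: psi>0 (x,y) circulos}. The antipodal map acts on $\gamma_c$ as a time shift $s$ such that $2s$ is a period but $s$ is not, since $0\notin\gamma_c$.
\item The rotation $R$ must be the one carrying $(u_2(0),u_1(0))$ to $u(\tau/2)$, not $R_\alpha$. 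A reflection such as $R_\alpha$ composed with the swap does not preserve $X$.
\item Your separate degenerate case is unnecessary. Indeed, $z(t)=z(0)\exp\int_0^t\sigma$ either vanishes identically or keeps its sign.
\end{itemize}
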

								
								To prove Proposition \ref{prop: psi>0 orbitas circulos pra psi}, we need the following  lemma:
								\begin{lemma} \label{lemma: psi>0 (x,y) circulos}
									On $A\cap \{\psi >0\}$,  each orbit $\gamma$ of the Hamiltonian $\psi^2$ is (diffeomorphic to) a circle. Moreover, if the period of $\gamma$ is $2\lambda>0$, then $\gamma(\lambda)=-\gamma(0)$. Finally, 
									as $\psi$ tends to zero, the time-of-first-return tends to infinity.
								\end{lemma}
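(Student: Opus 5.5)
The lemma has three parts: the orbits are circles, the half-period antipodal symmetry, and the blow-up of the return time. I treat them in that order.

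\emph{Orbits are circles.} This is essentially contained in the proof of Lemma~\ref{lemma: A is annulus} and Remark~\ref{rmk: level sets are circles}. For $0<v<1$, the level set $\{\psi=v\}\cap D$ is a single embedded circle $C_v$ in $\interior A\setminus\{0\}$. On $C_v$ the Hamiltonian vector field $(\psi\,\partial_y\psi,-\psi\,\partial_x\psi)$ has norm $v\,|\nabla\psi|>0$ by property~\ref{psi é submersao}. A nonvanishing vector field tangent to a compact connected curve has that whole curve as one periodic orbit. So the orbit through any point with $\psi=v$ is $C_v$, with period
\[
2\lambda(v)=\oint_{C_v}\frac{ds}{v\,|\nabla\psi|}.
\]

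\emph{The symmetry $\gamma(\lambda)=-\gamma(0)$.} Since $\psi(-x,-y)=\psi(x,y)$, the gradient is odd, so the Hamiltonian field $Y$ satisfies $Y(-p)=-Y(p)$. Hence the involution $\iota(p)=-p$ sends $Y$ to $Y$, and $\iota$ maps orbits to orbits, with $\iota\circ\gamma(t)$ again an orbit of $Y$. Because $C_v$ is $\iota$-invariant, $\iota$ restricts to a fixed-point-free involution of the circle $C_v$; it is fixed-point free since $0\notin C_v$. As $\iota$ commutes with the flow, $\iota(\gamma(t))=\gamma(t+s)$ for a fixed $s\in(0,2\lambda)$. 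Applying $\iota$ twice gives $\gamma(t+2s)=\gamma(t)$, so $2s\equiv0 \pmod{2\lambda}$, and since $s\not\equiv 0$ we get $s=\lambda$.

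\emph{Blow-up of the period.} This is the main obstacle. I would bound the period from below by the time spent near a corner of the octagon. At a vertex, say $(2,1)$ where $x=2$ meets $x+y=3$, two factors of $\psi_0$ vanish, so $\psi$ and $\nabla\psi$ are both small there. As $v\to0$, the curves $C_v$ converge in Hausdorff distance to $\partial D$, since $\psi$ is continuous and positive in the interior. Hence $C_v$ passes through every small neighbourhood $U_\delta$ of the vertex.

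On $U_\delta$ the speed satisfies $|Y|=v|\nabla\psi|$, with $|\nabla\psi|\le c$ near the boundary by smoothness. So the speed is at most $cv$. The length of $C_v\cap U_\delta$ is bounded below by some $\ell>0$ independent of small $v$, because the curve must travel around the corner. Therefore
\[
2\lambda(v)\ \ge\ \frac{\ell}{c\,v}\ \longrightarrow\ \infty\qquad (v\to0).
\]
This bound needs no delicate estimate near the corner; the factor $\psi$ in the Hamiltonian field alone forces the blow-up. The remaining care is the uniform lower bound on arc length. It follows because, for $v$ small, $C_v$ contains points within $\delta/4$ of both edges at distance about $\delta/2$ from the vertex, and so has length at least $\delta/4$ in $U_\delta$.
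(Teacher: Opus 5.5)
Your proposal is correct and follows the same route as the paper: circles via the level-set argument of Lemma~\ref{lemma: A is annulus}, the antipodal symmetry via a shift $s$ with $2s\equiv 0 \pmod{2\lambda}$ and $s\not\equiv 0$, and the blow-up from comparing the speed $v\,|\nabla\psi|\le cv$ with a uniform lower bound on orbit length. The only difference is that you localize near a vertex, whereas the paper simply uses that these orbits are essential circles of $A$ and so have length bounded below globally. That is simpler, and it avoids your slightly loose last step: two points near the two edges do not by themselves give arc length inside $U_\delta$, but one point of $C_v$ in $U_{\delta/2}$ together with the fact that $C_v$ leaves $U_\delta$ does.
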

								\begin{proof}
									By Remark \ref{rmk: level sets are circles}, each orbit of the Hamiltonian $\psi^{2}/2$ is a circle. Let $\gamma(t)=(x(t),y(t))$ be an orbit in $A\cap \{\psi >0\}$ and let $2\lambda$ be its period. 
									
									Now, we claim that $\gamma(\lambda)=-\gamma(0)$. To see this, we begin by the observation that $ -\gamma $ is also an orbit since the Hamiltonian satisfies $\psi^2(x,y)=\psi^2(-x,-y)$, and so 
									\[ -\gamma = -J (\nabla \psi^2 )(\gamma)= J (\nabla \psi^2 )(-\gamma)\;  . \]
									
									Moreover, since both $ \gamma $ and $-\gamma $ lie in the same level curve of $\psi$, there must exist $\tau>0$ (which we can suppose minimal) with $- \gamma (\tau)=\gamma(0)$.
									
									Define $\widehat{\gamma}(t)=-\gamma(t+\tau)$. This is also an orbit of the Hamiltonian, which starts at $\widehat{\gamma}(0)=-\gamma(\tau)= \gamma(0)$.  So, $\widehat{\gamma}(t)=\gamma(t)$ for all $t\in \R$, i.e., 
									$$-\gamma(t+\tau)=\gamma(t)\text{ for all $t$.}$$
									In particular, for $t=\tau$, we have: $-\gamma(2\tau)=\gamma(\tau).$ On the other hand, since $\gamma(\tau)=-\gamma(0) $, we conclude that $$\gamma(2\tau)=\gamma(0),$$ i.e., $2k\cdot \tau=2\lambda$ for some $k\in \N$. If $k$ is odd, we are done, since $\gamma(\lambda)=\gamma((2k^{\prime} +1)\tau)=\gamma(\tau)=-\gamma(0)$.  If  $k=2k^{\prime}$ is even, then   $\gamma(2k^\prime \tau)=\gamma(\lambda)=\gamma(0)$. This would imply that $2\lambda$ is not the minimal period for $\gamma$, a contradiction. This proves $\gamma(\lambda)=-\gamma(0)$, i.e., $x(\lambda)=-x(0)$ and $y(\lambda)=-y(0)$.

									Finally, as every orbit $\gamma$ of $A\cap \{\psi >0\}$ is an essential circle, there exists $m>0$ such that their length is bounded from below by $m$. On the other hand, we recall that $\psi^2$ is constant on each orbit and so:
									\begin{align*}
										\ell(\gamma)&=\psi(\gamma(0))\cdot\int_{0}^{2\lambda} \|\nabla \psi (\gamma(t)) \| \ dt.
									\end{align*} 
									As $ \ell(\gamma)$ is bounded from below and $\|\nabla \psi\|$ is bounded from above, when $\psi(\gamma(0))\to 0$ we must have   $\lambda \to +\infty$.
									\end{proof}

									\begin{proof}[Proof of Proposition \ref{prop: psi>0 orbitas circulos pra psi}]
									Let $\xi(t)=(x(t),y(t),u_{1}(t),u_{2}(t),w_{1}(t),w_{2}(t),z(t))\in M$ be an orbit of $X$. Since $\psi>0$, $(\dot{x},\dot{y})\neq 0$, and since, by Lemma \ref{lemma: psi>0 (x,y) circulos}, the level curves in $A$ are simple closed curves. Let $2\lambda >0$ be the period of $(x(t),y(t))$. We will show that $\xi(2\lambda)=\xi(0)$.
									
									By  Lemma \ref{lemma: psi>0 (x,y) circulos} we have that $x(\lambda)=-x(0)$ and $y(\lambda)=-y(0)$. 
									
									If $z(0)=0$ then $\rho(x(0),y(0))=0$ and, since $\psi>0$ (so $(x(0)-y(0))^{2}\neq 9$ and $(x(0)+y(0))^{2}\neq 9$), we must have $\psi(x(0),y(0))=1$. Since the level sets are invariant, we conclude that $\psi(x(t),y(t))=1$ for all $t\in \R$, so that $z(t)=0$ for every $t$. This also implies that if $z(0)>0$ then $z(t)>0$ for all $t$, and if $z(0)<0$ then $z(t)<0$, for all $t\in \R$. Moreover, by definition of $z$, $z^{2}=\rho(x,y)$. As $\rho(-x,-y)=\rho(x,y)$, we have  $z(\lambda)=z(0)=z(2\lambda)$.
									
									Finally we need to show the coordinates $u_{1},u_{2},w_{1},w_{2}$ have the same period. By definition of $F_{1}$ and $F_{2}$ we have: $$u_{1}^{2}(\lambda)+u_{2}^{2}(\lambda)=u_{1}^{2}(0)+u_{2}^{2}(0),$$ and $$w_{1}^{2}(\lambda)+w_{2}^{2}(\lambda)=w_{1}^{2}(0)+w_{2}^{2}(0).$$ Therefore, there exist rotations $R$ and $S$, in the $u$-plane and in the $w$-plane, respectively, such that $$\xi(\lambda)=RST\xi(0).$$ Of course, $R$ and $S$ depend on $\xi(0)$. Since $R_{\ast}X=S_{\ast}X=T_{\ast}X=X$ by Lemma \ref{lemma: invariancia por rotacao em u e em w}, we then conclude, by the uniqueness of solutions of ODEs, that: $$\xi(t+\lambda)=RST\xi(t),$$ for all $t\in \R$.
									
									Now we observe that $T$ is involutive and conjugates $R$ and $S$ with their inverse, i.e., $TRT=T^{-1}RT=R^{-1}$ and $TST=T^{-1}ST=S^{-1}$, so that $RSTRST=I$, where $I$ is the identity. Hence, $$\xi(2\lambda)=RST\xi(\lambda)=RSTRST\xi(0)=\xi(0).$$ This shows the orbits are simple closed curves of period $2\lambda>0$.
									
									To see that the period of $\xi$ goes to infinity, observe that we already know, by Lemma \ref{lemma: psi>0 (x,y) circulos}, that the period $2\lambda$ of the orbit $(x(t),y(t))$ on $A$ goes to infinity as the initial point of the orbit goes to the boundary of $D$. Since the period of $\xi$ is the same, we are done.
									\end{proof}

									\begin{rmk}\label{rmk: period is given by the flow on A}
									Notice that the period of the flow is then given by the period of its projection onto $A$, i.e., the Hamiltonian flow on $A$ with $H=\psi^{2}/2$.
									\end{rmk}	
									
									\begin{prop} \label{prop: orbitas circulos em x+y=3 pra psi}
									If $x+y=\pm 3$ or $x-y=\pm 3$, then the orbit through \(\xi\) is a circle.
									\end{prop}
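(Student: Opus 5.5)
On the edges $x+y=\pm3$ and $x-y=\pm3$ of $\partial D$ we have $\psi=0$. My plan is to show that the vector field $X$ there reduces to two rigid rotations, one in the $u$-plane and one in the $w$-plane, with constant angular speeds of equal absolute value. The orbit is then the orbit of a rigid rotation of the torus $\{|u|=\mathrm{const}\}\times\{|w|=\mathrm{const}\}$ along a direction with slope $\pm1$, which is a circle (or a point moving on one circle, if one radius vanishes).

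\textbf{Step 1: the $x,y,z$ coordinates are frozen.} Take $\xi\in M$ with, say, $x+y=3$. Since $\psi(x,y)=0$, the $x$- and $y$-components $\psi\,\partial_y\psi$ and $-\psi\,\partial_x\psi$ of $X$ vanish. The line $\{x+y=3\}$ lies in $\{\psi=0\}$, which is invariant, so $x$ and $y$ stay constant along the orbit. Moreover $\rho(x,y)=0$ because of the factor $9-(x+y)^2$, hence $z=0$. Although $\sigma$ has the denominator $9-(x+y)^2$, this factor divides $\psi$ (it is a factor of $\psi_0$), so $\sigma$ is smooth. Therefore $\dot z=z\sigma$ vanishes on $\{z=0\}$, and $z\equiv0$ along the orbit.

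\textbf{Step 2: the $u$- and $w$-motions are rigid rotations.} The functions $K$ and $L$ both contain the factor $(9-(x+y)^2)(9-(x-y)^2)$, so $K=L=0$ on the edge. With $(x,y)$ fixed, the equations become
\[
\dot u_1=-p\,u_2,\quad \dot u_2=p\,u_1,\qquad \dot w_1=-q\,w_2,\quad \dot w_2=q\,w_1,
\]
with $p=p(x,y)$ and $q=q(x,y)$ constant. On $A\cap\{x+y=3\}$ we have $1\le x,y\le 2$, as in the proof of Lemma~\ref{lemma: M see projeta sobre A versao psi}. So conditions \ref{p e positivo} and \ref{q e positivo} give $p,q>0$, and condition \ref{p=q} gives $p=q$. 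Hence $\xi(t)$ rotates both planes by the same angle $pt$, and it returns to $\xi(0)$ exactly at $t=2\pi/p$. This holds also if one of the radii $\sqrt{4-x^2}$ or $\sqrt{4-y^2}$ is zero, which happens at $x=2$ or $y=2$. Since $X\neq0$ by Lemma~\ref{lemma: X nunca e zero em M}, the orbit is a circle of period $2\pi/p$.

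\textbf{Step 3: the other three edges.} The edges $x+y=-3$ and $x-y=\pm3$ are handled by the reflections $x\mapsto -x$ and $y\mapsto-y$, or by the involution $T$ of Lemma~\ref{lemma: invariancia pela involucao T pra psi}. The reflections send the edge $x+y=3$ to the others. By \ref{p e simetrico} and \ref{q e simetrico}, on each edge $p$ and $q$ are nonzero, $|p|=|q|$, and their signs may differ. The same computation then gives rotations by the angles $pt$ and $\pm pt$, which is again periodic with period $2\pi/|p|$.

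The only delicate point is Step 1: checking that $\sigma$ (hence $z\sigma$) is genuinely smooth and well defined on the edges, which relies on $9-(x\pm y)^2$ dividing $\psi$. Everything else is bookkeeping of the symmetry hypotheses on $p$ and $q$.
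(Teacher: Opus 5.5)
Your proof is correct and follows the same route as the paper. You freeze $(x,y,z)$ using $\psi=\rho=0$, observe that $K=L=0$ on the edges, and reduce to simultaneous rigid rotations in the $u$- and $w$-planes with $|p|=|q|\neq 0$, which gives a circle of period $2\pi/|p|$. Your version is slightly more careful than the paper's: you check that $\sigma$ is smooth on the edges, obtain $p,q\neq 0$ from the sign conditions (i)--(ii) on $p,q$ instead of from the explicit example formulas, and carry $|p|=|q|$ over to the other three edges using the symmetries (iii)--(iv), which the paper only asserts.
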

									\begin{proof}
									Since $x+y=\pm 3$ or $x-y=\pm 3$, we have that $\psi(x,y)=0$ (since $\psi_{0}(x,y)=0$) and that $\rho(x,y)=0$. Hence, $z(0)=0$ and so $z(t)=0$ for all $t$. Also, $\psi(x,y)=0$ guarantees $\dot{x}=\dot{y}=0$, i.e., $x$ and $y$ are independent of $t$. Moreover, by definition of both $K$ and $L$, $K(x(0),y(0))=L(x(0),y(0))=0$. Because the equation $x(t)+y(t)=\pm 3$ or $x(t)-y(t)=\pm 3$ will be satisfied for all $t$, since $x$ and $y$ do not depend on $t$, we actually have $K(x(t),y(t))=L(x(t),y(t))=0$, for all $t\in \R$. Hence, throughout the orbit, the following equations are satisfied: $$\begin{cases}
										\dot{x}=\dot{y}=\dot{z}=0\\
										\dot{u}_{1}=-pu_{2}\\
										\dot{u}_{2}=pu_{1}\\
										\dot{w}_{1}=-qw_{2}\\
										\dot{w}_{2}=qw_{1}
									\end{cases}.$$ Since we have chosen $p$ and $q$ to satisfy property \ref{p=q} above, we must have $p=q$ or $p=-q$, so we conclude that $|p|=|q|$. This shows that the angular speed in both the $u$-plane and the $w$-plane is $|p|=|q|$, which is non-zero since $p=q=0$ if and only if $x=y=0$, and $(x,y)\notin A$ (and $M$ projects onto $A$). This proves that the orbit through such a point is a circle with time of first return equal to $\frac{2\pi}{|p|}$. 
									\end{proof}
									
									\begin{prop} \label{lemma: orbitas circulos em x=2 pra psi}
									If $x=\pm 2$ or $y=\pm 2$, then the orbit through \(\xi\) is a circle.
									\end{prop}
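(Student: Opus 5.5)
By the symmetries $x\mapsto -x$ and $y\mapsto -y$ of $\psi$, together with the symmetry properties of $p,q,K,L$, it suffices to treat $x=2$. The case $y=\pm2$ is identical after exchanging the roles of the $u$- and $w$-planes.

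First, let $\xi\in M$ with $x=2$. Then $\psi_{0}(x,y)=0$, so $\psi(x,y)=0$ and the Hamiltonian part vanishes: $\dot x=\dot y=0$ along the whole orbit. Hence $(x,y)$ stays constant at $(2,y_0)$. Since $(2,y_0)\in A\subset D$, we have $1\le y_0\le 1$ from $x+y\le 3$ and $x-y\le 3$, i.e. $-1\le y_0\le 1$.

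Next I check each remaining coordinate with $(x,y)$ frozen.
\begin{itemize}
\item The $u$-plane: $u_1^2+u_2^2=4-x^2=0$ forces $u_1=u_2=0$ for all $t$. This is consistent with the equations, since they are linear in $u$.
\item The $z$-coordinate: $\dot z=z\sigma(x,y)$, and $\sigma$ contains the factor $\psi=0$, so $\sigma(2,y_0)=0$ and $z$ is constant. Here I use that the denominators $9-(x\pm y)^2$ do not vanish at $(2,y_0)$ unless $y_0=\pm1$. In the corner cases $y_0=\pm1$ we have $\rho=0$, so $z=0$ throughout anyway.
\item The $w$-plane: $L(x,y)$ contains the factor $(4-x^2)=0$, so $L=0$. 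The equations reduce to the rigid rotation $\dot w_1=-q w_2$, $\dot w_2=q w_1$ with constant angular speed $q(2,y_0)$, which is nonzero by hypothesis \ref{q e positivo}.
\end{itemize}
Since $w_1^2+w_2^2=4-y_0^2>0$ (because $|y_0|\le1$), the orbit is a circle of radius $\sqrt{4-y_0^2}$ traversed with period $2\pi/|q(2,y_0)|$.

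Finally, for $x=-2$ I use $q(-x,y)=-q(x,y)$ to get $q(-2,y_0)\ne0$. The orbit again reduces to a nondegenerate rotation in the $w$-plane with everything else frozen, so it is a circle. For $y=\pm2$ the same argument applies: $w=0$, $K=0$ (by the factor $4-y^2$), and the $u$-plane rotates with speed $p(x_0,\pm2)\neq0$ by \ref{p e positivo} and \ref{p e simetrico}.

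The only delicate point is confirming that $\sigma$ vanishes, or at least that $\dot z=0$, at the corner points where $(x,y)=(\pm2,\pm1)$ and the denominators of $\sigma$ vanish. I would handle these via $z^2=\rho=0$ there, which forces $z\equiv0$ directly rather than relying on the formula for $\sigma$.
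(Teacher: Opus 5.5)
Your proof is correct and follows the paper's own argument. You use $\psi=0$ to freeze $(x,y)$, the relation $u_1^2+u_2^2=4-x^2=0$ to kill the $u$-plane, and the factor $4-x^2$ in $L$ to reduce the $w$-dynamics to a rigid rotation on a circle of radius $\sqrt{4-y_0^2}>0$. You are slightly more careful than the paper in checking that $q(\pm 2,y_0)\neq 0$. Your corner-point worry about $\sigma$ is harmless but unnecessary: $\psi/(9-(x\pm y)^2)$ is smooth because those factors are contained in $\psi_0$, so $\sigma$ vanishes on all of $\{x=\pm2\}$.
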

									\begin{proof}
									Suppose, for example, that $x=2$. Then by $u_{1}^{2}+u_{2}^{2}=4-x^{2}$, we have $u_{1}=u_{2}=0$. Also, since $\psi=0$ (because $\psi_{0}=0$), we have $\dot{x}=\dot{y}=0$: this shows that $(x(t),y(t))$ is constant and, hence $z(t)$ also is (it depends only on $x$ and $y$), implying $\dot{z}=0$. We then have: $$\dot{x}=\dot{y}=\dot{u}_{1}=\dot{u}_{2}=\dot{z}=0.$$ Moreover, $L(x,y)=0$ through the orbit. Indeed, $\dot{x}=0$ implies that $x(t)=2$ for all $t$, and so $L(x(t),y(t))=0$ for all times $t$. So, on the orbit, we have the equation: $$\begin{cases}
										\dot{w}_{1}=-qw_{2}\\
										\dot{w}_{2}=qw_{1}
									\end{cases},$$ where $w^{2}_{1}+w^{2}_{2}=4-y^{2}\neq 0$. The orbit is then a circle with time of first return $\frac{2\pi}{|q|}$. 
									\end{proof}
									
									\begin{fact}\label{prop: orbitas vao para infinito para phi}
									The length of the orbits of the vector field $X$ forms an unbounded set of the real numbers.
									\end{fact}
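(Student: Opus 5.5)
The plan is to show that the lengths of the orbits of $X$ in $M$ cannot stay bounded, using the orbits lying over $A\cap\{\psi>0\}$ and their Hamiltonian projections to $A$. By Proposition~\ref{prop: psi>0 orbitas circulos pra psi} and Remark~\ref{rmk: period is given by the flow on A}, such an orbit $\xi$ is a circle of period $2\lambda$, where $2\lambda$ is the period of the Hamiltonian orbit $\gamma=(x,y)$ of $H=\psi^2/2$ on $A$. By Lemma~\ref{lemma: psi>0 (x,y) circulos}, $\lambda\to+\infty$ as $\psi(\gamma(0))\to0$.

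Periods alone are not enough, since the speed $|X|$ could tend to zero. I would therefore bound $|X|$ from below by a positive constant on a neighbourhood of $\{\psi=0\}$ in $M$. Then
\[
\ell(\xi)=\int_0^{2\lambda}|X(\xi(t))|\,dt\ge c\cdot 2\lambda\to\infty .
\]
The lower bound is a compactness argument. $X$ is continuous and, by Lemma~\ref{lemma: X nunca e zero em M}, nonvanishing on the compact manifold $M$ (Lemma~\ref{lemma: M e compact pra phi}). Hence $c:=\min_M|X|>0$, and this bound holds on all of $M$, so no separate neighbourhood estimate is needed. The key steps are:
\begin{enumerate}
\item pick points $(x_n,y_n)\in A$ with $\psi(x_n,y_n)\to0^+$;
\item lift each to $\xi_n\in M$, which is possible by Lemma~\ref{lemma: M see projeta sobre A versao psi};
\item note that the periods $2\lambda_n\to\infty$;
\item conclude $\ell(\xi_n)\ge 2c\lambda_n\to\infty$.
\end{enumerate}

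One could instead take the projected length $\ell(\gamma)$ as a lower bound, but that is bounded (it is comparable to the octagon perimeter), so it would not suffice. The main obstacle is therefore to make sure the period really diverges, that is, that Lemma~\ref{lemma: psi>0 (x,y) circulos} applies. That lemma rests on the estimate $\ell(\gamma)=\psi\int_0^{2\lambda}\|\nabla\psi\|\,dt$, together with a lower bound on $\ell(\gamma)$ because the orbits are essential circles, and an upper bound on $\|\nabla\psi\|$ on $D$. Both are already available, so the fact follows.
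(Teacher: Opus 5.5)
Your proposal is correct and is essentially the paper's proof: the uniform lower bound $c=\min_M\|X\|>0$ (from compactness of $M$ and Lemma~\ref{lemma: X nunca e zero em M}), combined with the divergence of periods given by Proposition~\ref{prop: psi>0 orbitas circulos pra psi}, gives lengths at least $c\cdot\Per(\xi_n)\to\infty$. Your aside that the projected lengths $\ell(\gamma)$ stay bounded is only evident for $\varphi\equiv1$ and is not justified for general $\varphi$ (for instance, the $\psi$ built in Section~\ref{subsection: An example of foliation with positive order of emergence}), but you never use it, so the argument is unaffected.
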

									\begin{proof} 
									By Lemma \ref{lemma: X nunca e zero em M}, the vector field $X$ never vanishes and so by compactness of $M$, there is $C>0$ such that $\|X(\xi)\|>C$ for every $\xi\in M$. By Proposition \ref{prop: psi>0 orbitas circulos pra psi}, there is sequence of points $\xi_n$ whose periods $\Per(\xi_n)$ tend to infinity. Hence, the lengths of the orbit of $\xi_n$, which is at least $C\cdot  \Per(\xi_n)$, tend to infinity. 
									\end{proof}

				Hence, we have generalized the counterexample: we can produce a new family of counterexamples to the periodic orbit conjecture for any function $\psi$ satisfying conditions \ref{phi eh c infinito} to \ref{phi eh simetrica}.
			
\section{A new example of flow with emergence of positive order} \label{subsection: An example of foliation with positive order of emergence}

In \cite{bergerbochi}, Theorem B, the following result is proved about flows on the annulus $\mathbb{A}\coloneqq \T\times [0,1]$, where $\T\coloneqq \R/\Z$: 

\begin{thmsn}
	There exists a smooth conservative flow $\left(\Phi^{t}\right)_{t}$ on the annulus $\mathbb{A}$ such that for every $t\neq 0$ the emergence of $f=\Phi^{t}$ has maximal order $d=2$: $$\overline{\mathcal{O}}\mathcal{E}_{f}=d.$$
\end{thmsn}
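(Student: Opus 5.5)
The approach is to build the flow as a Hamiltonian flow on $\mathbb{A}$ whose orbits are all essential level circles, parametrized by an \emph{action} variable $r\in[0,1]$, and to control the ergodic decomposition of $f=\Phi^t$ through Proposition~\ref{prop: quantizacao vs emeergencia}. Concretely, take a smooth function $H\colon\mathbb{A}\to[0,1]$ without critical points whose level sets $\gamma_r=H^{-1}(r)$ are essential circles. Let $\nu_r$ be the normalized time-parametrization measure on $\gamma_r$ (the flow-invariant probability on that circle), and let $P(r)$ be the period of $\gamma_r$.

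\textbf{Step 1 (from the flow to $\Phi^t$).} On $\gamma_r$ the map $\Phi^t$ is conjugate to the rotation by $t/P(r)$. I will arrange the twist condition $P'(r)\neq0$ except on a null set. Then, for every fixed $t\neq0$, the number $t/P(r)$ is irrational for a.e.\ $r$. For such $r$, $\mathsf e^{\Phi^t}(x)=\nu_{H(x)}$ for every $x\in\gamma_r$. Hence the ergodic decomposition of $\mathrm{Leb}$ under $\Phi^t$ is
\[
\hat\mu=\bigl(r\mapsto\nu_r\bigr)_\ast m,
\qquad m:=H_\ast\mathrm{Leb}.
\]
This measure does not depend on $t$, and by Proposition~\ref{prop: quantizacao vs emeergencia} we get $\mathcal E_{\mathrm{Leb}}(\Phi^t)(\eps)=\mathcal Q_{\hat\mu}(\eps)$. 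The upper bound $\overline{\mathcal O}\mathcal E\le 2$ holds in general, since it is Berger's bound that the order is at most the dimension (the quantization of measures on $\mathcal M(\mathbb{A})$ of dimension $2$). So everything reduces to proving $\mathcal Q_{\hat\mu}(\eps)\ge\exp(\eps^{-2+o(1)})$ along a sequence $\eps_n\to0$.

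\textbf{Step 2 (construction of $H$).} Split $[0,1]$ into disjoint action intervals $I_n$ of Lebesgue mass $m(I_n)=w_n$, with $\sum w_n<1$ and $w_n$ decaying only slowly (say $w_n\ge\eps_n^{\delta}$ for every $\delta>0$ eventually). In the strip $H^{-1}(I_n)$, cover the annulus by a grid of $\asymp\eps_n^{-2}$ cells of size $\sim\eps_n$. In each cell, insert a smooth area-preserving bump perturbation of the level curves together with a local slowdown of the speed. The effect is that an orbit which passes a cell "in configuration $1$" spends extra time there compared with configuration $0$, so that $\nu_r$ gives the cell an excess mass of order $\eps_n^{2}$. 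Let $r$ run through $I_n$, and let the pattern of configurations seen by $\gamma_r$ run through a family $\mathcal W_n$ of binary words of length $L_n\asymp\eps_n^{-2}$. By Gilbert--Varshamov, I choose $\mathcal W_n$ with $|\mathcal W_n|\ge e^{cL_n}$ and pairwise Hamming distance $\ge L_n/4$. Testing against a $1$-Lipschitz function built from tent functions on the cells, the measures $\nu_r$ and $\nu_{r'}$ for different words are at Wasserstein distance $\ge c'\eps_n$. Each word occupies a sub-interval of $I_n$ of $m$-mass $w_n/|\mathcal W_n|$. All of this must be done with $C^\infty$ bounds. The perturbations at level $n$ have $C^k$ size $\eps_n^{1-k}$ times an amplitude factor, so the amplitudes must be chosen tiny and the strips nested so that the limit is smooth. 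The loss in the excess mass is only polynomial in $\eps_n$; this rescales $\eps_n$ but does not change the exponent $2$ in the $\log\log$. The twist condition is then restored by a small generic modification of the speed that does not destroy the separation.

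\textbf{Step 3 (quantization lower bound).} Suppose $\nu$ is supported on $N$ points and $\mathsf d(\hat\mu,\nu)\le\eps$. Transport sends mass of $\hat\mu$ from the $\mathcal W_n$-clusters (of total mass $w_n$) to the $N$ atoms. Each atom lies within $c'\eps_n/2$ of at most one cluster. Therefore, if $N\le|\mathcal W_n|/2$, at least $w_n/2$ of the mass must move a distance $\ge c'\eps_n/2$, which gives $\mathsf d\ge c'w_n\eps_n/4$. Taking $\eps=c'w_n\eps_n/8$ forces
\[
\mathcal Q_{\hat\mu}(\eps)\ge \tfrac12 e^{c\eps_n^{-2}}.
\]
Since $w_n$ is subpolynomial, $-\log\eps\sim-\log\eps_n$, and hence
\[
\limsup_{\eps\to0}\frac{\log\log\mathcal Q_{\hat\mu}(\eps)}{-\log\eps}\ge2.
\]
I expect the main obstacle to be Step 2: producing $e^{c\eps_n^{-2}}$ genuinely $\eps_n$-separated invariant circle measures inside a strip of small action width while keeping $H$ smooth and area-preserving. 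The key difficulty is that each level curve must visit all $\eps_n^{-2}$ cells of a two-dimensional grid. My first attempt would be to make the level curves inside the strip follow a fixed space-filling "serpentine" path through the grid of cells, so that each curve visits every cell once, and to realize the local choices $0/1$ by time reparametrization via slowdown zones that depend smoothly on $r$.
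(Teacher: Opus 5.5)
Your Steps 1 and 3 are sound. The reduction $\mathcal E_{\mathrm{Leb}}(\Phi^t)=\mathcal Q_{\hat\mu}$ for a twist flow with essential invariant circles holds. So does the lower bound on $\mathcal Q_{\hat\mu}$ from $|\mathcal W_n|$ clusters that are pairwise $c'\varepsilon_n$-apart. The gap is in Step 2, in how smoothness is reconciled with separation. You say the level-$n$ perturbations have $C^k$-size $\varepsilon_n^{1-k}$ times an amplitude $a_n$, and that shrinking $a_n$ costs only a polynomial loss in the excess mass.

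But $C^\infty$ needs $a_n\varepsilon_n^{1-k}\to0$ for \emph{every} $k$, so $a_n$ must be superpolynomially small in $\varepsilon_n$. If the separation really scales like $a_n\varepsilon_n$, then at $\varepsilon\approx a_n\varepsilon_n w_n$ your bound gives only $\log\log\mathcal Q_{\hat\mu}(\varepsilon)/(-\log\varepsilon)\approx 2\log\varepsilon_n^{-1}/\log(a_n\varepsilon_n)^{-1}\to0$. A polynomial $a_n$ gives only finite regularity.

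Keeping the geometry at full size does not help by itself. A bump of the level curves at scale $\varepsilon_n$ has curvature $\gtrsim\varepsilon_n^{-1}$, and the curvature equals $|\mathrm{Hess}\,H(T,T)|/|\nabla H|$; a slowdown by a factor $\asymp 1$ over length $\varepsilon_n$ has the same effect on $D^2H$. So $|\nabla H|\lesssim\|H\|_{C^2}\,\varepsilon_n$ on the $n$-th strip. Hence $H$ must have critical points where the strips accumulate. Moreover, if each strip visits an $\varepsilon_n$-grid of the \emph{whole} annulus, as you propose, then $\nabla H$ vanishes on a dense set and $H$ is constant.

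The missing idea is the one used in the proof this paper adapts (Theorem~\ref{teorema jairo pierre eh epstein vogt}; the statement itself is quoted from Berger--Bochi). The circle measures $\nu_r$, hence $\hat\mu$, are unchanged under $H\mapsto\Omega\circ H$, since $X_{\Omega\circ H}=\Omega'(H)X_H$ only multiplies the speed on each level by a constant. This is why \cite[Prop.~4.2]{bergerbochi} is stated for every twist profile $\zeta$. So one keeps the level-$n$ geometry at full size at scale $\varepsilon_n$ and confines it to sub-annuli accumulating on a circle. In the paper these are the dyadic annuli $\mathbb A_n$ near the boundary, at the harmless cost $C_n=2^{-2n}C/16$. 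Smoothness then comes from making $\Omega$ flat enough there, namely $\Omega'''=\sum_i\eta_i\,b(2^i\rho-1)$ with $\eta_i\to0$ fast. The twist is also adjusted through $\Omega$, not by a ``generic modification of the speed''.

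Relatedly, in a conservative flow you cannot insert local slowdowns as a time change. Since $\operatorname{div}(gX_H)=dg(X_H)$, the field $gX_H$ preserves area only if $g$ is constant on level sets. The $0/1$ choices must instead be encoded in the spacing of the level curves. This means an area-preserving rearrangement in which the sub-strip $H^{-1}(J_w)$ over the sub-interval $J_w\subset I_n$ of each word $w$ is wider in its $1$-cells, with the other sub-strips compensating. Producing that rearrangement is exactly the content of the symplectomorphism $h_\varepsilon$ of \cite[Prop.~4.2]{bergerbochi}, which your serpentine sketch describes only heuristically.
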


This subsection aims to show that a slight modification of their example can be viewed as a factor of the generalization of Epstein--Vogt's example we have just constructed. To do so, we need to construct a flow $(\Phi^{t})_{t}$ with maximal order of emergence on the annulus $A$ defined in (\ref{definicao do anel}) p. \pageref{definicao do anel} with $\varphi\colon \R^{2} \to \R$ satisfying conditions \ref{phi eh c infinito} to \ref{phi eh simetrica} in Subsection \ref{section: Epstein-Vogt's counterexample}, where $\Phi^{t}$ is the flow associated with the Hamiltonian $\psi^{2}/2$: $$\begin{cases}
	\dot{x}=\psi\cdot \frac{\partial \psi}{\partial y}\\
	\dot{y}=-\psi\cdot \frac{\partial \psi}{\partial x}
\end{cases},$$ for $\psi=\varphi\cdot\psi_{0}$. This will enable us to carry out the same construction to define, afterward, a flow without singularities on a $4$-manifold. We recall that $\psi_{0}$ is the polynomial function defined on (\ref{psi_0}), p. \pageref{psi_0}, that vanishes on the boundary of the domain $D$.

\begin{thm}\label{teorema jairo pierre eh epstein vogt}
	There exists a smooth Hamiltonian
	flow $\left(\Phi^{t}\right)_{t}$ on the annulus $A$ defined in (\ref{definicao do anel}) such that: \begin{enumerate}[label=\((\alph*)\)]
		\item \label{Phi tem ordem de emergencia maximal} the order of emergence of $\Phi$ is maximal, i.e., $\overline{\mathcal{O}}\mathcal{E}_{\Leb}(\Phi)=2$;
		\item \label{H e hamiltoniano} the Hamiltonian $H$ of $\Phi^{t}$ is of the form $H=\frac{\psi^{2}}{2}$, for some function $\psi$ satisfying conditions \ref{phi eh c infinito} to \ref{phi eh simetrica} on Subsection \ref{section: Epstein-Vogt's counterexample}.
	\end{enumerate}     
\end{thm}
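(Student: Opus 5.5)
The plan is to transport the Berger--Bochi mechanism (Theorem B of \cite{bergerbochi}) into the annulus $A$. We keep the level-set structure of a Hamiltonian of the form $\psi^2/2$, and choose $\varphi$ so that, on a subannulus far from both boundaries, the flow is a model flow with maximal order of emergence.

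The first step is to recall the structure of the Berger--Bochi example. It is a conservative flow on $\mathbb A=\T\times[0,1]$ which is a Hamiltonian flow whose orbits are essentially the circles $\T\times\{r\}$, except on a family of small disjoint "islands". The rotation numbers and the nested island structure produce empirical measures that are spread like a measure of quantization order $2$ in $\mathcal M(\mathbb A)$. What matters is that the result is produced by a smooth Hamiltonian $h$ on $\mathbb A$ that is flat (a function of $r$ alone) near $\p\mathbb A$.

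The key construction is then the following. Take the unperturbed $\psi_0$ with $\varphi\equiv1$ near $\p D$ and near the circle $\{\psi=1\}$. Pick two regular values $0<a<b<1$ of $\psi_0$, and use action--angle-type coordinates: the map $(x,y)\mapsto(\theta,\psi_0)$ identifies $\{a\le\psi_0\le b\}$ with $\T\times[a,b]$ through a symmetric diffeomorphism. The symmetry holds because $\psi_0$ is even in $x$ and in $y$, so we choose the angle coordinate equivariantly.

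Inside this subannulus we replace $\psi_0$ by $\psi=\psi_0\varphi$, with $\varphi$ chosen so that $\psi^2/2$ equals (after a change of time/coordinates) the Berger--Bochi Hamiltonian $h$ composed with the chart. Since Theorem B allows its islands to be placed freely, we place them in a fundamental domain of the $\Z/2\times\Z/2$ symmetry group and reflect them to get \ref{phi eh simetrica}. To keep \ref{psi é submersao}, we must avoid critical points of $\psi$ in $\interior D\setminus\{0\}$. This is the main obstacle: the Berger--Bochi islands contain elliptic points. So instead I would use the variant of their example in which emergence comes from a twist (rotation numbers varying with $r$) composed with a nonintegrable but critical-point-free perturbation. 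If that fails, I would modify the domain of condition \ref{psi é submersao} so that the allowed singularities are only isolated centers, and check that Lemma~\ref{lemma: A is annulus} and Lemma~\ref{lemma: M eh variedade pra psi} survive.

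The final step is to conclude \ref{Phi tem ordem de emergencia maximal}. On the subannulus, $\Phi^t$ is smoothly conjugate, via a bi-Lipschitz chart, to the Berger--Bochi flow up to a smooth positive time change. Using Proposition~\ref{lemma lipschitz preserva emergencia para fluxos} with the Lipschitz inverse chart and Proposition~\ref{prop: emergencia pol nao muda para medidas equiv} to compare Lebesgue measures, we get the lower bound $\overline{\mathcal O}\mathcal E_{\Leb}(\Phi)\ge 2$. The time change must also be handled; it reweights empirical measures by a bounded density, which is a Lipschitz operation on $\mathcal M$. The remaining part of $A$ contributes only finitely many or polynomially many measures, since its orbits are circles parametrized by one value. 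The upper bound $2$ is the general bound of Berger--Bochi for dimension $2$.
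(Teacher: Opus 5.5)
Your plan has a genuine gap, and it is structural rather than technical. First, you misread the Berger--Bochi mechanism: it is not an island construction. In Proposition~\ref{prop: prop 4.2 pierre-jairo} the flow is $h_\eps^{-1}\circ R^t_\zeta\circ h_\eps$, an integrable twist conjugated by a symplectomorphism. Its Hamiltonian $\Omega\circ h_\eps$ therefore has no critical points, and every orbit is an essential circle, so the ``main obstacle'' you anticipate never arises. Your first fallback is in fact what Berger--Bochi do, except that nothing is nonintegrable: an autonomous Hamiltonian flow on a surface always has $H$ as a first integral. Your second fallback (admitting centers) violates condition~(iii), which part (b) requires. It would also break Lemma~\ref{lemma: A is annulus} and the periodicity argument of Proposition~\ref{prop: psi>0 orbitas circulos pra psi}, which needs each level of $\psi$ in $A\cap\{\psi>0\}$ to be a single circle invariant under $(x,y)\mapsto(-x,-y)$.

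The fatal problem is localizing everything in $\{a\le\psi_0\le b\}$ while keeping $\varphi\equiv1$ near $\partial D$. On any compact region where $\psi$ is smooth and $d\psi\neq0$, push level curves along $\nabla\psi/\|\nabla\psi\|^{2}$. This shows that the map sending a level value $c$ to the orbit measure (normalized $d\ell/\|\nabla\psi\|$ on $\{\psi=c\}$) is Lipschitz into $(\mathcal M(A),\mathsf d)$. Hence that region's part of the ergodic decomposition has quantization number $O(\eps^{-1})$. In other words, the argument you use to dismiss the rest of $A$ applies verbatim to your subannulus.

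A single conjugacy $h_\eps$ gives the bound $\exp(C\eps^{-2})$ at one scale $\eps$ only. Order $2$ needs infinitely many conjugacies $h_{n,\eps_n}$ on disjoint annuli, whose derivatives must blow up (by the Lipschitz bound above). These annuli must accumulate at a circle where the Hamiltonian is flat, so that it stays smooth. In the construction behind Theorem B (as adapted here, with $\mathbb A_n=\T\times[2^{-n},2^{-n+1}]$), that circle is $\{\rho=0\}$. So the Berger--Bochi Hamiltonian is not a function of $r$ with nonzero derivative near all of $\partial\mathbb A$, and it cannot be glued into $\interior D$ without violating (iii). By (ii)--(iii), the only admissible accumulation circle in $A$ is the octagon $\{\psi=0\}$, which is exactly the region you froze.

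The missing idea is the paper's:
\begin{itemize}
\item a symmetric action--angle symplectomorphism $F\colon\T\times(0,1]\to\check A$ that sends $\rho\to0$ to the octagon;
\item the symmetrized conjugacies placed on $A_n=F(\mathbb A_n)$ (your reflect-from-a-fundamental-domain idea is fine here);
\item the Hamiltonian $H=\Omega\circ h\circ F^{-1}$, with $\Omega$ flat at $0$ (via $\Omega'''$ a sum of rapidly decaying bumps) to absorb the blow-up of $DF$ and $Dh$ near $\partial D$.
\end{itemize}
That is precisely where the Epstein--Vogt periods diverge anyway.
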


As already stated, the theorem will follow from an adaptation of Theorem B of \cite{bergerbochi}. Given $ \zeta\colon [0,1]\to \R$ a $C^{\infty}$ function, we define the rotation $R^{t}_{\zeta}\colon \mathbb{A}\to \mathbb{A}$ by $R_{ \zeta}^{t}(\theta,\rho)=(\theta+t\cdot  \zeta(\rho),\rho)$. To start the proof, we need a proposition that allows us to create, for each scale $\eps>0$, a new flow $\Psi^{t}_{n,\eps}$ on $\mathbb{A}_{n}\coloneqq \T \times [2^{-n},2^{-n+1}]$, conjugated to the rotation $R^{t}_{ \zeta}$ on $\mathbb{A}$, with emergence bounded from below by an exponential factor. Observe that, since \(R_\zeta^t\) preserves the second coordinate, each horizontal annulus $\mathbb{A}_n$ is invariant by \(R_\zeta^t\). 

\begin{prop}\label{emergencia em cada escala}
	For every natural number $n\geq 1$, there exists a constant $C_{n}>0$ such that for every $\eps>0$, there exists a smooth symplectomorphism $h_{n,\eps}\colon \mathbb{A}_{n}\to \mathbb{A}_{n}$ such that, for every $C^{\infty}$ mapping $ \zeta\colon [0,1]\to \R$ with non-vanishing first and second derivatives, we have that the flow $\Psi^{t}_{n,\eps}\coloneqq h^{-1}_{n,\eps}\circ R^{t}_{ \zeta}\circ h_{n,\eps}$ satisfies: $$\mathcal{E}_{\mathrm{Leb}_{n}}(\Psi^{t}_{n,\eps})(\eps)\geq \exp(C_{n}\cdot \eps^{-2}),$$ where $\mathrm{Leb}_{n}\coloneqq 2^{n}\cdot \restr{\mathrm{Leb}}{\A_{n}}$. Furthermore, $h_{n,\eps}$ is the identity near the boundary of $\mathbb{A}_{n}$, satisfies the symmetry condition $h_{n,\eps}\circ \sigma_{k}=\sigma_{k}\circ h_{n,\eps}$, for $k=1,2$ with $\sigma_{1}(\theta,\rho)\coloneqq \left(-\theta,\rho\right)$ and $\sigma_{2}(\theta,\rho)\coloneqq \left(-\theta+\frac12,\rho\right)$, where $(\theta, \rho)\in \mathbb{A}_{n}$.

	\end{prop}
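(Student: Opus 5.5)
We adapt the proof of the corresponding proposition in \cite{bergerbochi} (the step preceding Theorem~B). The plan is to rescale $\mathbb{A}_n$ to a fixed model annulus and work there. Then build a symplectomorphism that makes the ergodic decomposition of the conjugated rotation spread over many far-apart measures. Finally, impose the two reflection symmetries by an equivariant version of that construction.

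\textbf{Construction at scale $\eps$.} The map $(\theta,\rho)\mapsto(\theta,2^{n}\rho-1)$ sends $\mathbb{A}_n$ affinely onto $\mathbb{A}$. It multiplies area by $2^{n}$ and distorts distances by a factor depending only on $n$, which is absorbed into $C_n$. Under this identification, $\mathrm{Leb}_n$ becomes Lebesgue measure on $\mathbb{A}$.

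For $R^t_\zeta$ with $\zeta'\neq0$, every invariant circle $\T\times\{\rho\}$ has irrational rotation number for almost every $\rho$. Its empirical measure is then $\mathrm{Leb}_{\T}\otimes\delta_\rho$. After conjugation by $h$, the empirical measure of a point on $h^{-1}(\T\times\{\rho\})$ is $h^{-1}_{\ast}(\mathrm{Leb}_{\T}\otimes\delta_\rho)$.

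The construction proceeds as follows.
\begin{itemize}
\item Partition the $\rho$-interval into about $\eps^{-1}$ thin bands, and pave the annulus by about $\eps^{-2}$ small squares of side of order $\eps$.
\item Choose a family of $\exp(c\eps^{-2})$ subsets $S_j$ of the squares, one for each band. The family should be $\eps$-separated in the sense that the uniform measures on $\bigcup S_j$ are pairwise at Wasserstein distance at least $c'\eps$. Such a family exists by a Gilbert--Varshamov type counting argument: use subsets of half the squares, with pairwise Hamming distance a fixed fraction.
\item Construct an area-preserving $h$ that is the identity near $\partial\mathbb{A}$ and maps most of each band onto a thin tubular curve threading through the squares of $S_j$. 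By the Moser/Dacorogna trick, any smooth isotopy of curves can be made area preserving.
\end{itemize}
The empirical measures of the bands are then $\eps$-close to these pairwise $c'\eps$-separated measures.

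\textbf{Lower bound on emergence.} By Proposition~\ref{prop: quantizacao vs emeergencia}, adapted to flows, it suffices to bound the quantization number of the ergodic decomposition from below. That decomposition gives mass $\approx\eps$ to each of $\exp(c\eps^{-2})$ clusters that are pairwise $c'\eps$-separated. Any $N$ measures at average distance $\le c''\eps$ must come $c'\eps/2$-close to a positive proportion of the clusters. Each measure can serve at most one cluster, so $N\ge\exp(C_n\eps^{-2})$ after adjusting constants.

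\textbf{Symmetry and main obstacle.} Independence from $\zeta$ is automatic, since only $\zeta'\neq0$ was used, to get irrational rotation numbers almost everywhere. The main obstacle is the symmetry requirement: $h$ must commute with $\sigma_1$ and $\sigma_2$, which generate a group isomorphic to $(\Z/2)^2$ containing the half-turn $\theta\mapsto\theta+\tfrac12$. Our plan is to do the construction on a fundamental domain $[0,\tfrac14]\times[0,1]$ and extend by the reflections. To make the result smooth across the fixed lines $\theta\in\{0,\tfrac14,\tfrac12,\tfrac34\}$, we require the curve and the isotopy to be the identity, or orthogonally symmetric, near those lines. Symmetrizing the subsets $S_j$ costs only a factor $4$ in the exponent's count, still giving $\exp(c\eps^{-2})$ separated measures. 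Area preservation of the extended map follows because the reflections are area preserving (anti-symplectic, but conjugation by them preserves symplecticity). Checking that Moser's equivariant correction keeps the commutation is the delicate step. We would handle it by averaging the correcting vector field over the group.
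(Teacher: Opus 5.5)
There is a genuine gap in your construction step, and it sits exactly where the exponential lower bound has to come from. You partition the $\rho$-interval into about $\eps^{-1}$ bands, but then assign ``one subset $S_j$ for each band'' from a family of size $\exp(c\eps^{-2})$; these two counts are incompatible. All invariant circles of a given band are sent near the same target measure. So the ergodic decomposition you produce is concentrated, up to the small exceptional part, near at most $\approx\eps^{-1}$ points of $\mathcal{M}(\mathbb{A})$, and the construction only yields a polynomial lower bound, not $\exp(C_n\eps^{-2})$. Your lower-bound paragraph has the same inconsistency: $\exp(c\eps^{-2})$ clusters of mass $\approx\eps$ each would have total mass $\gg 1$.

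What is actually needed, and what Berger--Bochi do, is $N\approx\exp(c\eps^{-2})$ nested invariant sub-annuli, each of width $\approx 1/N$. You then need an area-preserving $h$ making the circles of the $j$-th sub-annulus equidistribute on $\bigcup S_j$. Equidistribution here is with respect to the pushforward of $\mathrm{Leb}_{\T}$, which is governed by area along the tube, not by arc length. With equal masses $1/N$, your Markov/separation argument then does give at least $N/2$ measures. Fitting exponentially many nested, exponentially thin essential tubes, each threading a different half of the $\eps^{-2}$ squares, is the real technical heart of the matter. Invoking the Moser/Dacorogna trick does not address it.

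By contrast, the paper does not rebuild this at all. It quotes \cite[Prop.~4.2]{bergerbochi}, which already supplies $h_\eps$ equal to the identity near $\partial\mathbb{A}$ and near $\{\theta=0\}$. It compresses $h_\eps$ into $[0,1/4]\times[0,1]$ via $J(\theta,\rho)=(4\theta,\rho)$ and extends by $\sigma_1,\sigma_2$, which is your fundamental-domain idea. It then transfers to $\mathbb{A}_n$ by the affine rescaling $g_n$, using Proposition~\ref{quantizacao lipschitz} and Proposition~\ref{prop: quantizacao vs emeergencia}. Because the compressed map is the identity near the fixed lines of the reflections, the glued map is automatically smooth, symplectic and equivariant. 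The equivariant Moser averaging you flag as the delicate step is therefore unnecessary. So your symmetry and rescaling steps are sound, and even simpler than you fear. The missing ingredient is the correct exponential count of bands with the corresponding realization lemma, which you must either prove or import by citing Berger--Bochi's proposition.
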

	\begin{proof}
The proof of this proposition will rely on the following:

\begin{prop}[{\cite[Prop. 4.2]{bergerbochi}}] \label{prop: prop 4.2 pierre-jairo}
	There exists a constant $C>0$ such that for every $\eps>0$, there exists a smooth symplectomorphism $h_{\eps}\colon \mathbb{A}\to \mathbb{A}$ such that, for every $C^{\infty}$ mapping $ \zeta\colon [0,1]\to \R$ with non-vanishing first derivative and for every $t\neq 0$, the flow $\Psi^{t}_{\eps}\coloneqq h^{-1}_{\eps}\circ R^{t}_{ \zeta}\circ h_{\eps}$ satisfies: $$\mathcal{E}_{\mathrm{Leb}}(\Psi^{t}_{\eps})(\eps)\geq \exp(C\cdot  \eps^{-2}).$$ Furthermore, $h_{\eps}$ is the identity near the boundary of $\mathbb{A}$ and on the set $\{(\theta,\rho)\in \A \mid \theta=0\}$.
\end{prop}

\begin{figure}[h]
	\centering
	\IfFileExists{h_n,eps.jpeg}{\includegraphics[width=0.995\textwidth]{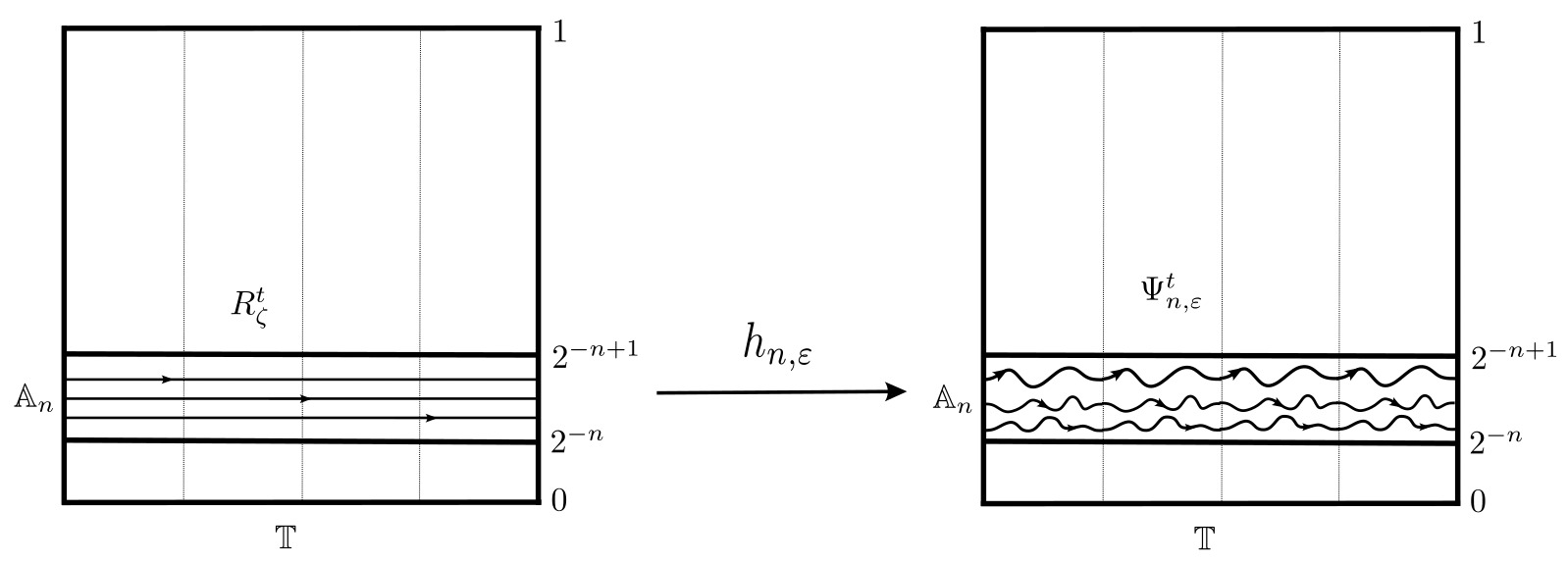}}{\fbox{Missing figure: \texttt{h\_n,eps.jpeg}}}
	\caption{The conjugacy by $h_{n,\eps}$ seen at $\mathbb{A}_{n}$.}
	\label{fig: h_eps}
\end{figure}

Since $h_\eps$ coincides with the identity near $\{\theta=0\}$, by identifying $\T=\R/\Z$ with the fundamental domain $[-1/2,1/2]$ and setting

\[
J\colon (\theta,\rho) \in [0,1/4]\times [0,1]\mapsto (4\theta,\rho)\in \A
\] we can define: 

\[
\tilde{h}_{\eps}\colon (\theta, \rho)\in \A \mapsto \left\{ \begin{array}{cl}
	J^{-1}\circ h_{\eps}\circ J& \ \text{on} \ Q_{1}\coloneqq[0,1/4]\times [0,1]\\
		 \sigma_{1}\circ J^{-1}\circ h_{\eps}\circ J\circ \sigma_{1}& \ \text{on} \ Q_{2}\coloneqq[-1/4,0]\times [0,1]\\
		 \sigma_1 \circ \sigma_2 \circ J^{-1}\circ h_{\eps}\circ J\circ \sigma_1 \circ \sigma_2& \ \text{on} \ Q_{3}\coloneqq[-1/2,-1/4]\times [0,1] \\
		 \sigma_2 \circ J^{-1}\circ h_{\eps}\circ J \circ \sigma_2 & \ \text{on} \ Q_{4}\coloneqq[1/4,1/2]\times [0,1]
	\end{array}
	\right\}
\]

We notice that $\tilde{h}_\eps$ is a symplectomorphism that commutes with the involutions $\sigma_{1}$ and $\sigma_{2}$. 

Now, for any smooth $\eta:[0,1]\to\R$, set
\[
        \tilde\Psi^t_{\eps,\eta}
        :=
        \tilde h_\eps^{-1}\circ R_\eta^t\circ \tilde h_\eps.
\]

\begin{fact}
For every such $\eta$ with non-vanishing first derivative and for every
$t\neq0$, the emergence of $\tilde\Psi^t_{\eps,\eta}$ at scale $\eps$ satisfies
\[
        \mathcal E_{\mathrm{Leb}}(\tilde\Psi^t_{\eps,\eta})(\eps)
        \geq
        \exp\left(\frac{C}{16}\eps^{-2}\right).
\]
\end{fact}

\begin{proof}
	This follows from the symmetry of $\tilde{h}_{\eps}$ in Proposition \ref{prop: prop 4.2 pierre-jairo}.
\end{proof}

Now, given $\eps>0$, we conjugate $\tilde h_{\eps}$ by the affine rescaling
$g_{n}\colon \mathbb A_{n}\to \mathbb A$ given by $g_{n}(\theta,\rho)=(\theta,2^{n}\rho-1)$, which maps $[2^{-n},2^{-n+1}]$ to $[0,1]$. Define $h_{n,\eps}\colon \A_{n}\to \A_{n}$ as $h_{n,\eps}:=g_{n}^{-1}\circ \tilde h_{\eps}\circ g_{n}$.

Although $R_{\zeta}^{t}$ preserves $\mathbb A_{n}$, its expression in the
rescaled coordinate is a rotation with a rescaled speed. More precisely,
define
\begin{equation*}
\zeta_{n}(s):=\zeta\bigl(2^{-n}(s+1)\bigr),
\qquad s\in[0,1].
\end{equation*}
Then
\begin{equation*}
g_{n}\circ R_{\zeta}^{t}\circ g_{n}^{-1}
=
R_{\zeta_{n}}^{t}.
\end{equation*}
Therefore, if
\begin{equation*}
\Psi_{n,\eps}^{t}:=h_{n,\eps}^{-1}\circ R_{\zeta}^{t}\circ h_{n,\eps},
\end{equation*}
then
\begin{equation*}
\Psi_{n,\eps}^{t}
=
g_{n}^{-1}\circ
\bigl(\tilde h_{\eps}^{-1}\circ R_{\zeta_{n}}^{t}\circ \tilde h_{\eps}\bigr)
\circ g_{n}.
\end{equation*}
Thus, in the rescaled annulus $\mathbb A$, the correct speed is $\zeta_n$.
Since $\zeta_{n}'$ and $\zeta_{n}''$ do not vanish whenever $\zeta'$ and
$\zeta''$ do not vanish, the previous construction applies to $\zeta_n$.

We claim that 
$$\mathcal{E}_{\mathrm{Leb}_{n}}(\Psi^{t}_{n,\eps})(\eps)\geq \mathcal{E}_{\mathrm{Leb}}(\tilde{\Psi}^{t}_{\eps,\zeta_n})(2^{n}\eps)\geq \exp\left(\frac{C}{16}\cdot  2^{-2n}\eps^{-2}\right),$$ that is 
$$\mathcal{E}_{\mathrm{Leb}_{n}}(\Psi^{t}_{n,\eps})(\eps)\geq \exp(C_{n}\cdot  \eps^{-2}),$$ for $C_{n}=2^{-2n}\cdot \frac{C}{16}$. To give a complete proof of this claim, we need a general proposition that explains how the ergodic decomposition of a measure behaves through conjugacy:

\begin{propsn}[{\cite[Lemma 3.11]{bergerbochi}}]
	Let $X$ and $Y$ be compact metric spaces, $f\colon X\to X$ and $g\colon Y\to Y$ continuous, and $h\colon X\to Y$ a continuous semi-conjugacy between $f$ and $g$, i.e., $h\circ f=g\circ h$. 
	\[
	\begin{array} {rcccl}
		& &f& &\\
		& X  &\to & X &\\
		h&\downarrow &  &\downarrow &h\\
		& Y  &\to & Y &\\
		& &g& &
	\end{array} 
	\]
	Then, $h_{\ast}(\mathcal{M}_{f}(X))\subset \mathcal{M}_{g}(Y)$, $h_{\ast}(\mathcal{M}_{f}^{\mathrm{erg}}(X))\subset \mathcal{M}_{g}^{\mathrm{erg}}(Y)$, and for all $\mu \in \mathcal{M}_{f}(X)$ we have: $$\mathrm{e}_{\ast}^{g}(h_{\ast}\mu)=(h_{\ast})_{\ast}(\mathrm{e}_{\ast}^{f}(\mu)).$$ If there is no confusion, we write $h_{\ast\ast}$ instead of $(h_{\ast})_{\ast}$. 
\end{propsn}

Note that $\mathrm{Leb}=(g_{n})_{\ast}\mathrm{Leb}_{n}$. Let $\widehat{\mu}_{n}\coloneqq\mathrm{e}^{\Psi^{t}_{n,\eps}}_{\ast}(\mathrm{Leb}_{n})$ and $\widehat{\mu}\coloneqq \mathrm{e}^{\tilde \Psi^{t}_{\eps,\zeta_n}}_{\ast}(\mathrm{Leb})$. By the above proposition we have: $(g_{n})_{\ast\ast}\widehat{\mu}_{n}=\widehat{\mu}$. Since $g_{n}$ is $2^{n}$-Lipschitz, using Proposition \ref{quantizacao lipschitz}, the fact that $\Leb$ is invariant by $\tilde \Psi_{\eps,\zeta_n}^{t}$, and Proposition \ref{prop: quantizacao vs emeergencia}, we get: $$
\mathcal{E}_{\mathrm{Leb}_{n}}(\Psi^{t}_{n,\eps})(\eps)=\mathcal{Q}_{\widehat{\mu}_{n}}(\eps) \geq \mathcal{Q}_{\widehat{\mu}}(2^{n}\eps)=\mathcal{E}_{\mathrm{Leb}}(\tilde \Psi^{t}_{\eps,\zeta_n})(2^{n}\eps)\geq \exp\left(\frac{C}{16}\cdot 2^{-2n}\eps^{-2}\right),$$ 
i.e., $\mathcal{E}_{\mathrm{Leb}_{n}}(\Psi^{t}_{n,\eps})(\eps)\geq \exp\left(C_{n}\cdot \eps^{-2}\right)$, where $C_{n}=\frac{C}{16}\cdot 2^{-2n}$.
\end{proof}

Let $\check{A}$ denote the annulus obtained from $A$ by removing its outer boundary component (the octagon). The following lemma was written in collaboration with Pierre Berger.

\begin{lemma} \label{lemma: construcao da F}
	There exists a symplectomorphism $F: \T\times (0,1]\to \check{A}$ satisfying 
	\[
	F\circ \sigma_1 =S_1 \circ F \ \text{and} \ F\circ \sigma_2=S_2 \circ F,
	\]
	where $S_{1}(x,y)=(-x,y)$ and $S_{2}(x,y)=(x,-y)$.
\end{lemma}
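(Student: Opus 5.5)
The plan is to build $F$ from action--angle coordinates of a Hamiltonian on $\check A$ that commutes with the symmetries $S_1,S_2$. I will use the function $\psi$ that defines $A$; with $\varphi\equiv 1$ this is $\psi_0$. By the proof of Lemma~\ref{lemma: A is annulus}, its level sets $\{\psi=c\}$, $0<c\le 1$, are embedded circles foliating $\check A$. Condition \ref{psi é submersao} gives $d\psi\neq 0$ on $\check A$, including on the inner boundary circle $\{\psi=1\}$. On each such circle $\psi$ generates a periodic Hamiltonian flow with period $\tau(c)$.

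The first step is the \emph{action} coordinate. Set
\[
\rho(p)=\frac{1}{a}\,\mathrm{Area}\bigl\{q\in D:\ 0<\psi(q)\le \psi(p)\bigr\},\qquad a=\mathrm{Area}(\check A).
\]
This is a smooth, strictly increasing function of $c=\psi(p)$, since $d\rho/dc=\tau(c)/a>0$. It tends to $0$ at the octagon and equals $1$ on the inner circle, so $\rho$ maps $\check A$ onto $(0,1]$. Since $T\times(0,1]$ has total area $1$, $F$ can only be symplectic up to the constant factor $a$. I would therefore either rescale the symplectic form on $\check A$ (equivalently, normalize $\psi$ and the domain) or read ``symplectomorphism'' as conformally symplectic with constant $a$. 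This makes no difference for the later use of $F$, since emergence only sees Lebesgue measure up to equivalence (Proposition~\ref{prop: emergencia pol nao muda para medidas equiv}).

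The second step is the \emph{angle} coordinate. I take as section the half-ray $\Sigma=\{x=0,\ y>0\}\cap\check A$, the fixed set of $S_1$ in the upper half. Along $\Sigma$, $\partial_y\psi\neq 0$; for $\psi_0$ this is explicit, since $\psi_0(0,y)=4(4-y^2)(9-y^2)^2$ is strictly decreasing for $0<y<2$. Hence $\Sigma$ meets each level circle exactly once and transversally. For $p$ on the level $c$, define $\theta(p)=t/\tau(c)\in\T$, where $t$ is the Hamiltonian time needed to flow from $\Sigma$ to $p$. The standard action--angle computation shows that $(\theta,\rho)$ satisfies $d\theta\wedge d\rho=a^{-1}\,dx\wedge dy$, up to a sign fixed by orientation. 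Then $F$ is the inverse of $p\mapsto(\theta,\rho)$, and it is smooth up to $\rho=1$ because $d\psi\neq0$ there.

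The third step checks the symmetries. Both $S_1$ and $S_2$ preserve $\psi$ and therefore $\rho$. Both are anti-symplectic, so they conjugate the Hamiltonian flow to its time reversal. $S_1$ fixes $\Sigma$, so $\theta\circ S_1=-\theta$, which is $F\circ\sigma_1=S_1\circ F$. For $S_2$, write $S_2=(-\Id)\circ S_1$. The map $-\Id$ commutes with the flow, and by Lemma~\ref{lemma: psi>0 (x,y) circulos} it sends $\gamma(0)$ to $\gamma(\lambda)$, a half period later. So $-\Id$ acts as $\theta\mapsto\theta+\tfrac12$, and composing gives $\theta\circ S_2=\tfrac12-\theta$, which is $F\circ\sigma_2=S_2\circ F$.

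I expect the main obstacle to be this half-period identification together with the regularity of the action--angle chart. The first point is to verify that $-\Id$ shifts the angle by exactly $\tfrac12$ uniformly in $c$; this follows from that lemma since $\theta$ is time normalized by the full period. The second is to check that the angle $\theta$ depends smoothly on $c$, in particular near the inner boundary. For general $\varphi$ I would also need transversality of $\Sigma$ to the level sets; I would try to derive it from condition \ref{psi é submersao} and the symmetry of $\psi$ in $x$, which forces $\partial_x\psi=0$ on $\{x=0\}$ and hence $\partial_y\psi\neq0$ there.
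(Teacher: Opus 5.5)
Your proposal is correct and takes essentially the paper's route. The paper's reparametrized energy $\tilde\psi=G\circ\psi$, normalized by $\tilde\psi_\ast\Leb=\Leb_{(0,1]}$, is exactly your normalized-area coordinate $\rho$, and its $F$ is the period-normalized flow started from the point of each level on $\{x=0\}$, with the symmetries inherited from $S_1,S_2$ preserving $\psi$. Your sketch fills in details the paper leaves implicit: fixing the half-ray $y>0$ so the base point is unique, the rescaling to normalized area, and the half-period argument that gives the $\tfrac12$ shift for $S_2$.
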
	
\begin{proof} 
    Recall that $A$ is the union of the energy levels of $\psi$ between $0$ and $1$. 
	Let $G: (0,1]\to (0,1]$ be the diffeomorphism whose derivative has its inverse equal to the density of $\psi_\ast \Leb_{(0,1]}$. Note the $\tilde \psi\coloneqq G\circ \psi$ satisfies that $\tilde \psi_{\ast}\Leb_{(0,1]}=\Leb_{(0,1]}$.
	  
	Hence with $\Per(x,y)$ the period of $(x,y)$ for the flow $(\phi^{t}_{\tilde \psi})_{t}$, the following map is a symplectomorphism between $\T\times (0,1]$ and $\check{A}$: 
	
	\[
     F: (\theta,\rho)\mapsto \phi_{\tilde \psi}^{\theta\cdot \Per(m(\rho))}(m(\rho)) 
	\]
	with $m(\rho)$ the point of $\check A$ with $\tilde \psi$-energy $\rho$ and $x$-coordinate $0$.
	
	Moreover, since $\tilde \psi$ is equivariant under $(x,y)\mapsto (-x,y)$ and $(x,y)\mapsto (x,-y)$, the map $F$ satisfies the claimed symmetry conditions. 
\end{proof}

Set $A_{n}\coloneqq F(\mathbb{A}_{n})$ and set \( \Leb_{A_n}:=\frac{\Leb|_{A_n}}{\Leb(A_n)}\). The family $\{A_{i}\}_{i\geq 1}$ forms a partition of $\check{A}$. A direct corollary of the previous Proposition \ref{emergencia em cada escala} is the following version for $A_n$ in place of $\mathbb{A}_n$:

\begin{coro}\label{corolario da modificacao da prop 7}
For every natural number $n\geq 1$, there exists a constant $\widehat{C}_{n}>0$ such that for every $\eps>0$, there exists a smooth symplectomorphism $\widehat{h}_{n,\eps}\colon A_{n}\to \A_{n}$ such that, for every $C^{\infty}$ mapping $ \zeta\colon [0,1]\to \R$ with non-vanishing first and second derivatives, we have that the flow $\widehat{\Psi}^{t}_{n,\eps}\coloneqq \widehat{h}^{-1}_{n,\eps}\circ 
R^{t}_{ \zeta}\circ  \widehat{h}_{n,\eps}$ satisfies: 
\[
\mathcal{E}_{\mathrm{Leb}_{A_n}}\big(\widehat\Psi^t_{n,\eps}\big)(\eps)
\;\ge\; \exp\!\left(\widehat C_n\cdot\eps^{-2}\right).
\]
Furthermore, $\widehat{h}_{n,\eps}$ coincides with $F^{-1}$ near the boundary of $A_{n}$, and satisfies the symmetry conditions 
\[
\widehat{h}_{n,\eps}\circ S_{k}=\sigma_{k}\circ \widehat{h}_{n,\eps}\qquad (k=1,2).
\]
\end{coro}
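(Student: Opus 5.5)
The plan is to transport Proposition~\ref{emergencia em cada escala} from $\mathbb{A}_n$ to $A_n$ using the symplectomorphism $F$ of Lemma~\ref{lemma: construcao da F}. We set
\[
\widehat h_{n,\eps}\coloneqq h_{n,\eps}\circ F^{-1}|_{A_n}\colon A_n\to\mathbb{A}_n ,
\]
which is a smooth symplectomorphism as a composition of two. Then
\[
\widehat\Psi^t_{n,\eps}=\widehat h_{n,\eps}^{-1}\circ R^t_\zeta\circ\widehat h_{n,\eps}=F\circ\Psi^t_{n,\eps}\circ F^{-1},
\]
so $\widehat\Psi^t_{n,\eps}$ is conjugate to $\Psi^t_{n,\eps}$ via $F|_{\mathbb{A}_n}$. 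The boundary and symmetry requirements are immediate. Since $h_{n,\eps}$ is the identity near $\partial\mathbb{A}_n$, the map $\widehat h_{n,\eps}$ equals $F^{-1}$ near $\partial A_n=F(\partial\mathbb{A}_n)$. For the symmetries, $F\circ\sigma_k=S_k\circ F$ gives $F^{-1}\circ S_k=\sigma_k\circ F^{-1}$, and combining this with $h_{n,\eps}\circ\sigma_k=\sigma_k\circ h_{n,\eps}$ yields $\widehat h_{n,\eps}\circ S_k=\sigma_k\circ\widehat h_{n,\eps}$.

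For the emergence bound, note first that $F$ is symplectic, so it preserves area. Hence $F_\ast(\Leb|_{\mathbb{A}_n})=\Leb|_{A_n}$ up to the normalisation of the symplectic form on $\check A$. After normalising, this gives $F_\ast\Leb_n=\Leb_{A_n}$, where both sides are probability measures and $\Leb(A_n)$ is a fixed multiple of $2^{-n}$. We then apply Proposition~\ref{lemma lipschitz preserva emergencia para fluxos}, with the Hölder exponent equal to $1$, to the factor map $F|_{\mathbb{A}_n}\colon\mathbb{A}_n\to A_n$. That proposition requires $F|_{\mathbb{A}_n}$ to be Lipschitz with some constant $L_n$, and it gives
\[
\mathcal{E}_{\Leb_n}(\Psi^t_{n,\eps})(\eps)\ \ge\ \mathcal{E}_{\Leb_{A_n}}(\widehat\Psi^t_{n,\eps})(L_n\eps).
\]
This is the wrong direction: it bounds $\widehat\Psi$ from above. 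So the factor map must run the other way, and we use $F^{-1}|_{A_n}\colon A_n\to\mathbb{A}_n$, which semi-conjugates $\widehat\Psi^t_{n,\eps}$ to $\Psi^t_{n,\eps}$ and pushes $\Leb_{A_n}$ to $\Leb_n$. If $F^{-1}$ is $L_n$-Lipschitz on $A_n$, the proposition gives
\[
\mathcal{E}_{\Leb_{A_n}}(\widehat\Psi^t_{n,\eps})(\eps)\ \ge\ \mathcal{E}_{\Leb_n}(\Psi^t_{n,\eps})(L_n\eps)\ \ge\ \exp\bigl(C_n L_n^{-2}\eps^{-2}\bigr),
\]
so we may take $\widehat C_n\coloneqq C_nL_n^{-2}$. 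Alternatively, since the flows are periodic and $\Leb$ is invariant, the same bound follows from Propositions~\ref{quantizacao lipschitz} and~\ref{prop: quantizacao vs emeergencia} together with the conjugacy lemma.

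The main point to verify is that $F^{-1}$ is Lipschitz on $A_n$, i.e.\ that $F$ is a diffeomorphism up to and including the boundary of $\mathbb{A}_n$. This holds because $A_n$ is a compact subset of $\check A$ lying at positive distance from the octagon, since $\rho\ge 2^{-n}$ there. On this set the Hamiltonian flow of $\tilde\psi$ has no critical points, the period $\Per(m(\rho))$ depends smoothly on $\rho$ and stays bounded, and the curve $m$ is smooth. Therefore $F$ is a smooth diffeomorphism between compact sets, and $L_n$ is finite. The constant $L_n$ blows up as $n\to\infty$, but this is harmless because the statement allows $\widehat C_n$ to depend on $n$.
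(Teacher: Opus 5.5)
Your route is the same as the paper's. You use the same $\widehat h_{n,\eps}=h_{n,\eps}\circ F^{-1}|_{A_n}$, verify the boundary and symmetry properties formally in the same way, and transfer the emergence bound through the bi-Lipschitz conjugacy $F|_{\mathbb{A}_n}$, giving $\widehat C_n=C_nL_n^{-2}$ (the paper's $\frac{C}{16}2^{-2n}\kappa_n^{-2}$). You correctly single out that the Lipschitz constant of $F^{-1}|_{A_n}$ is what enters. You also note that $F_\ast\Leb_n=\Leb_{A_n}$ after normalisation, which the paper leaves implicit.

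One step does not follow as written: the inequality $\mathcal{E}_{\Leb_n}(\Psi^t_{n,\eps})(L_n\eps)\ge\exp(C_nL_n^{-2}\eps^{-2})$.
\begin{itemize}
\item Proposition~\ref{emergencia em cada escala} only says that the map $h_{n,\eps}$ built for scale $\eps$ has emergence at least $\exp(C_n\eps^{-2})$ at that same scale $\eps$.
\item It says nothing about $\Psi^t_{n,\eps}$ at the coarser scale $L_n\eps$.
\item Since $\eps\mapsto\mathcal{E}(\cdot)(\eps)$ is nonincreasing, a lower bound at $\eps$ gives no lower bound at $L_n\eps$ when $L_n>1$.
\end{itemize}

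The repair is a reindexing. Taking $L_n\ge 1$ without loss of generality, set $\widehat h_{n,\eps}\coloneqq h_{n,L_n\eps}\circ F^{-1}|_{A_n}$. Symplecticity, the boundary behaviour and the symmetries are unaffected, since they hold for every $h_{n,\eps'}$. Applying Proposition~\ref{lemma lipschitz preserva emergencia para fluxos} to the factor $F^{-1}|_{A_n}$, and then Proposition~\ref{emergencia em cada escala} at scale $L_n\eps$, gives
\[
\mathcal{E}_{\Leb_{A_n}}\bigl(\widehat\Psi^t_{n,\eps}\bigr)(\eps)\ \ge\ \mathcal{E}_{\Leb_n}\bigl(\Psi^t_{n,L_n\eps}\bigr)(L_n\eps)\ \ge\ \exp\bigl(C_nL_n^{-2}\eps^{-2}\bigr).
\]
The paper's own proof also uses $h_{n,\eps}$ and passes over the same point, so with this reindexing your argument is complete.
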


\begin{proof}

By regularity of $F$, the restriction
$F|_{\mathbb A_n}\colon \mathbb A_n\to A_n$
is bi-Lipschitz; denote a bi-Lipschitz constant by $\kappa_n$. Define
\[
\widehat h_{n,\eps}\coloneqq h_{n,\eps}\circ \big(\,F^{-1}\big|_{A_n}\big).
\]
Since $h_{n,\eps}$ and $F^{-1}$ are symplectomorphisms, so is $\widehat h_{n,\eps}$. Moreover $\widehat h_{n,\eps}$ coincides with $F^{-1}$ near the boundary of $A_n$. Using the bi-Lipschitz bound and Proposition \ref{emergencia em cada escala}, there exists a constant $\widehat C_n \coloneqq \frac{C}{16}\cdot2^{-2n}\cdot  \kappa_n^{-2}$ such that 
\[
\mathcal{E}_{\mathrm{Leb}_{A_n}}\big(\widehat\Psi^t_{n,\eps}\big)(\eps)
\;\ge\; \exp\!\left(\widehat C_n\cdot\eps^{-2}\right).
\]
Finally, the symmetry and boundary properties follow from those of $h_{n,\eps}$ and $F$ (Lemma \ref{lemma: construcao da F}).
\end{proof}

	We now use Proposition \ref{emergencia em cada escala} to prove Theorem \ref{teorema jairo pierre eh epstein vogt}:

	\begin{proof}[Proof of Theorem \ref{teorema jairo pierre eh epstein vogt}]
		Let $(\alpha_{i})_{i}$ be a sequence of positive real numbers converging monotonically to 2. We claim that we can choose $(\eps_{i})_{i}$ and define a Hamiltonian flow $\Phi^{t}=\Phi^{t}_{\Omega,\eps}\colon A\to A$ such that for each $i\geq 1$,  $$\mathcal{E}_{\mathrm{Leb}}(\Phi^{t}_{\Omega,\eps})(\eps_{i})\geq \exp(\eps_{i}^{-\alpha_{i}}).$$ More precisely, we claim that, by choosing the sequence $(\eps_{i})_{i}$ converging to zero and such that $\widehat{C}_{i}\eps_{i}^{-2}\geq \eps_{i}^{-\alpha_{i}}$, and by picking $\Phi^{t}_{\Omega,\eps}$ as the flow associated to the Hamiltonian $H_{\Omega, \eps}\colon A \to \R$ given by 
		\[
		H_{\Omega,\varepsilon}(x,y)=
		\begin{cases}
			\Omega\circ h \circ F^{-1}(x,y),& (x,y)\in\check{A},\\[4pt]
			0,&\text{otherwise},
		\end{cases}
		\] we are done. The map $h\colon \A\to \A$ above is defined by $\restr{h}{\A_{i}}=h_{i,\eps_{i}}$, for all $i\geq 1$
		and $\Omega\colon \A\to \R$ is the Hamiltonian of $R_{\zeta}^{t}$, which is a smooth function such that $\Omega(\theta,\rho)=\Omega(\rho)$, i.e., $\Omega$ depends only on the second coordinate, and such that $\Omega^{\prime}$ and $\Omega^{\prime\prime}$ do not vanish on $(0,1]$ (and therefore $\nabla H_{\Omega,\eps}$ also does not vanish). Since $F$ satisfies the equivariance conditions demanded in Lemma \ref{lemma: construcao da F}, $h$ commutes with $\sigma_{k}$, and $\Omega$ does not depend on $\theta$, we obtain $$H_{\Omega,\eps}(x,y)=H_{\Omega,\eps}(-x,y)=H_{\Omega,\eps}(x,-y),$$ for every $(x,y)\in A$. Moreover, we claim that by choosing well $\Omega$, $H_{\Omega,\eps}$ is smooth. To see that, we first observe that when $i\to \infty$, the derivative of $F_{i}=\restr{F}{\A_{i}}$ explodes: otherwise we would get a Lipschitz map between $A$ and $\A$. This will also happen with $h$. So we need to ask $\Omega$ to decay fast enough near the boundary of $A$ to compensate for this behavior. This can be achieved by similar choices as the ones done in \cite{bergerbochi}: if we demand that $\Omega(0)=\Omega^{\prime}(0)=\Omega^{\prime\prime}(0)=0$ and $\Omega^{\prime\prime\prime}=j$, where $j\colon [0,1]\to \R$ is a function of the form $$j(\rho)\coloneqq \begin{cases}
			\eta_{i}\cdot b(2^{i}\cdot \rho -1) , & \text{if } \rho\in [2^{-i},2^{-i+1}] \\
			0, & \text{if }\rho=0
		\end{cases}, $$
		 where $b\colon \R\to \R$ is a smooth bump function that vanishes on $(-\infty,0]$ and $[1,+\infty)$ and it is positive on $(0,1)$, and $(\eta_{i})_{i}$ is a sequence of positive real numbers converging very rapidly to 0. By choosing the sequence well, we can have that the $C^{i}$-norm of the restriction of $\Omega$ to $[2^{-i},2^{-i+1}]$ is small. This shows that we can choose inductively the sequence $(\eta_{i})_{i}$ so that the $C^{i}$-norm of $H_{\Omega, \eps}$ goes to zero when $(x,y)$ approaches the outer boundary of $A$ for every $i$, proving that $H_{\Omega,\eps}$ is smooth.

		Now, observe that if we take $\Phi^{t}_{\Omega,\eps}$ as the Hamiltonian flow of $H_{\Omega,\eps}$ as above, we get for every $i\geq 1$: \begin{align*}
			\mathcal{E}_{\mathrm{Leb}}(\Phi^{t}_{\Omega,\eps})(\eps_{i})&\geq \mathcal{E}_{\mathrm{Leb}}(\Psi^{t}_{i,\eps_{i}})(\eps_{i})
			\\ &\geq \exp(\widehat{C}_{i}\cdot \eps_{i}^{-2})
			\\&\geq \exp (\eps_{i}^{-\alpha_{i}}),
		\end{align*} where the first inequality comes by definition of $h$ and the second by Corollary \ref{corolario da modificacao da prop 7}.
		
		Thus, for each $i\geq 1$, \begin{align*}
			\frac{\log{\log{\mathcal{E}_{\mathrm{Leb}}(\Phi^{t})(\eps_{i})}}}{-\log{\eps_{i}}}
			\geq \frac{\log{\log{\exp(\eps_{i}^{-\alpha_{i}})}}}{-\log{\eps_{i}}}=\alpha_{i}.
		\end{align*} Since $\alpha_{i}$ converges to $2$ as $i\to +\infty$, this proves that $\overline{\mathcal{O}}\mathcal{E}_{f}=2$, for every $f=\Phi^{t}$ with $t\neq 0$.

		This proves that $\Phi^{t}=\Phi^{t}_{\Omega,\eps}$ is a smooth Hamiltonian flow such that the function $\psi\coloneqq H_{\Omega,\eps}$ is divisible by $\psi_{0}$, satisfies the conditions \ref{phi eh c infinito} to \ref{phi eh simetrica}, and for every $t\neq 0$ the order of emergence of $f=\Phi^{t}$ is maximal, i.e., $\overline{\mathcal{O}}\mathcal{E}_{f}=2$.
	\end{proof}

Now we apply the construction of Subsection \ref{section: Epstein-Vogt's counterexample} to obtain a regular flow with positive order of emergence and prove Theorem~\ref{thm:main-lower-bound}:
					
					\begin{thm} \label{epstein vogt com emergencia}
						There exists a compact $4$-manifold $M$ and a $C^{\infty}$ regular flow $(\Theta^{t})_{t}$ on $M$ such that for every $t\neq 0$, the order of emergence of $f=\Theta^{t}$ is at least 2: $$\overline{\mathcal{O}}\mathcal{E}_{f}\geq 2.$$
					\end{thm}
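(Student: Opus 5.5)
The plan is to apply the Epstein--Vogt construction of Section~\ref{section: Epstein-Vogt's counterexample} with $\psi$ replaced by the function supplied by Theorem~\ref{teorema jairo pierre eh epstein vogt}. Write the Hamiltonian of that theorem as $H=\psi^2/2$, with $\psi=\psi_0\varphi$ satisfying conditions \ref{phi eh c infinito}--\ref{phi eh simetrica}.

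\textbf{Step 1: the manifold and the flow.} Build $M=F^{-1}(0)\subset\R^7$ and the vector field $X$ exactly as in Section~\ref{section: Epstein-Vogt's counterexample}, with $p,q$ chosen as there. By Lemmas~\ref{lemma: M eh variedade pra psi}, \ref{lemma: M e compact pra phi}, \ref{lemma: X e tangent a M} and \ref{lemma: X nunca e zero em M}, $M$ is a compact smooth $4$-manifold and $X$ is a smooth nonvanishing vector field tangent to $M$. Let $\Theta^t$ be its flow. Propositions~\ref{prop: psi>0 orbitas circulos pra psi}, \ref{prop: orbitas circulos em x+y=3 pra psi} and \ref{lemma: orbitas circulos em x=2 pra psi} show that every orbit is periodic.

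\textbf{Step 2: the factor map.} By construction the $(x,y)$-components of $X$ form the Hamiltonian field of $\psi^2/2$. Hence the projection $\proj\colon M\to A$ of Lemma~\ref{lemma: M see projeta sobre A versao psi} satisfies $\proj\circ\Theta^t=\Phi^t\circ\proj$, where $\Phi^t$ is the flow of Theorem~\ref{teorema jairo pierre eh epstein vogt}. The map $\proj$ is a smooth surjection from a compact manifold, so it is Lipschitz with some constant $C$. Proposition~\ref{lemma lipschitz preserva emergencia} (with $\alpha=1$, applied to the time-$t$ maps) then gives, for every $t\ne0$ and every $\eps$,
\[
\mathcal E_{\Leb_M}(\Theta^t)(\eps)\ \ge\ \mathcal E_{\nu}(\Phi^t)(C\eps),\qquad \nu:=\proj_*\Leb_M .
\]
Since $C$ is a fixed constant, the order of emergence is unchanged by the rescaling $\eps\mapsto C\eps$. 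So it suffices to bound the order of $\Phi^t$ with respect to $\nu$ from below by $2$.

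\textbf{Step 3: comparing $\nu$ with Lebesgue (main obstacle).} The natural route is Proposition~\ref{prop: emergencia pol nao muda para medidas equiv}, but $\nu$ is not uniformly equivalent to $\Leb_A$. Its density is, up to a constant, the fiber volume. Over $(x,y)$ the fiber is the product of two circles of radii $\sqrt{4-x^2}$, $\sqrt{4-y^2}$ and the two points $z=\pm\sqrt{\rho}$, weighted by the Jacobian factor of the submersion. This density degenerates near $\partial D$, where $u$ or $w$ collapses or $\rho\to0$, and possibly near $\{\psi=1\}$. The emergence lower bound in the proof of Theorem~\ref{teorema jairo pierre eh epstein vogt}, however, is realized on each single annulus $A_i=F(\A_i)$ separately, through Corollary~\ref{corolario da modificacao da prop 7}.

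I would therefore localize. Each $A_i$ is a compact set at positive distance from both $\partial D$ and $\{\psi=1\}$. On such a set the fiber-volume density is continuous and positive, hence bounded above and below by constants $K_i^{\pm 1}$. The emergence integrand is nonnegative and $A_i$ is $\Phi$-invariant, so restricting the integral to $A_i$ only decreases it, and the normalizations are harmless. This gives
\[
\mathcal E_{\nu}(\Phi^t)(\eps)\ \ge\ \mathcal E_{\nu|_{A_i}/\nu(A_i)}(\Phi^t)\bigl(\eps/\nu(A_i)\bigr)\ \ge\ \mathcal E_{\Leb_{A_i}}(\Phi^t)\bigl(c_i\eps\bigr),
\]
where the last step is Proposition~\ref{prop: emergencia pol nao muda para medidas equiv} and $c_i>0$ is a constant.

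On $A_i$ the flow $\Phi^t$ is $\widehat\Psi^t_{i,\eps_i}$, up to the time reparametrization coming from $\Omega$. Applying Corollary~\ref{corolario da modificacao da prop 7} at scale $c_i\eps_i$ yields the lower bound $\exp(\widehat C_i c_i^{-2}\eps_i^{-2})$. The new constants $c_i$ have to be absorbed. The cleanest way is to note that $\eps_i$ in the proof of Theorem~\ref{teorema jairo pierre eh epstein vogt} may be chosen after $c_i$ is known, since $c_i$ depends only on $F$ and the geometry of $A_i$, not on $h$. So I would rerun that choice with the requirement $\widehat C_i c_i^{-2}\eps_i^{-2}\ge (C\eps_i)^{-\alpha_i}$.

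This gives $\log\log\mathcal E_{\Leb_M}(\Theta^t)(\eps_i)\ge -\alpha_i\log(C\eps_i)$. Letting $i\to\infty$ and $\alpha_i\to2$ yields $\overline{\mathcal O}\mathcal E_{\Leb}(\Theta^t)\ge2$. The delicate points to check are that the positive lower bound on the density on each $A_i$ is genuinely independent of $\eps$, and that the invariance and normalization bookkeeping in the restriction step is correct.
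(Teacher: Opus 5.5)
Your Steps~1 and~2 are the paper's proof. The paper builds $M$ and $\Theta^t$ from the $\psi$ of Theorem~\ref{teorema jairo pierre eh epstein vogt}, notes that the ($1$-Lipschitz, so $C=1$) projection onto the $(x,y)$-coordinates semi-conjugates $\Theta^t$ to $\Phi^t$, applies Proposition~\ref{lemma lipschitz preserva emergencia}, and stops there, reading off $\overline{\mathcal O}\mathcal E_{\Leb}(\Phi^t)=2$. In doing so it silently identifies $\nu=\proj_*\Leb_M$ with $\Leb_A$. Your Step~3 addresses exactly this omission, and a comparison of this kind is genuinely needed.

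Your way of carrying it out, however, has a circular step. You claim that $c_i$ depends only on $F$ and the geometry of $A_i$, not on $h$. But $M$ is cut out by $z^2=\rho=(1-\psi)(9-(x+y)^2)(9-(x-y)^2)$ with the \emph{final} $\psi$. So the fiber density on $A_i$ involves $\nabla\psi$ there, which is built from $h_{i,\eps_i}$ and $\Omega$. Its normalization also involves $\mathrm{vol}(M)$, which depends on the whole sequence $(\eps_j)$. The two-sided constants $K_i$ you use therefore depend on $\eps_i$, and ``choose $\eps_i$ after $c_i$ is known'' is not available as written.

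The repair is that only the lower density bound matters, and it is universal. Parametrize each sheet $z=\pm\sqrt{\rho}$ by $(x,y,\theta_1,\theta_2)$. The induced volume element is then
\[
2\sqrt{4+\tfrac{(4-x^2)\rho_x^2+(4-y^2)\rho_y^2}{4\rho}}\;\ge\;4 .
\]
So the density does not degenerate near $x=\pm2$ or $y=\pm2$: the collapse of the $u$- and $w$-circles is exactly compensated by the Jacobian. It only blows up, integrably, where $\rho\to0$. Hence $\Leb_A\le K\nu$ on all of $A$, with $K=\mathrm{vol}(M)/(32\pi^2|A|)$, where $|A|$ is the area of $A$.

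The left inequality of Proposition~\ref{prop: emergencia pol nao muda para medidas equiv} (its proof uses only this one-sided bound) then gives $\mathcal E_{\Leb_M}(\Theta^t)(\eps)\ge\mathcal E_{\Leb_A}(\Phi^t)(K\eps)$. Since $K$ is a single constant, the order is unchanged, and item (a) of Theorem~\ref{teorema jairo pierre eh epstein vogt} concludes directly. There is no need to localize to the $A_i$ or to reopen the choice of $(\eps_i)$.

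Even within your localized scheme, tracking constants shows the upper bound cancels against $\nu(A_i)$. You get $c_i=(|A_i|\inf_{A_i}\mathrm{dens}_\nu)^{-1}$, and $\inf_{A_i}\mathrm{dens}_\nu\ge 32\pi^2/\mathrm{vol}(M)$ is the same global constant, which is just the argument above.
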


					\begin{proof}[Proof of Theorem \ref{epstein vogt com emergencia}]
						The theorem follows from the fact that, if $M$ and $\Theta^{t}$ are the manifold and flow obtained from the construction made in Subsection \ref{section: Epstein-Vogt's counterexample} with the function $\psi$ furnished by item \ref{H e hamiltoniano} of Theorem \ref{teorema jairo pierre eh epstein vogt}, then the flow $\Phi^{t}$ is the projection onto the first two coordinates of the flow $\Theta^{t}$. 
						
						Since this projection is $1$-Lipschitz, Lemma \ref{lemma lipschitz preserva emergencia} implies that $$\overline{\mathcal{O}}\mathcal{E}_{f}\geq \overline{\mathcal{O}}\mathcal{E}_{g}=2,$$ where $f=\Theta^{t}$ and $g=\Phi^{t}$, for $t\neq 0$.
					\end{proof}
					
					A final remark: regular flows with positive order of emergence were already achievable from \cite{bergerbochi} and \cite{berger22}. There, the authors provide examples of maps on compact manifolds with positive order of emergence. By taking the suspension flow of those examples, we would get the desired examples. 
					
					Here, however, we give a new example of regular flow with positive order of emergence that \textbf{does not} come from a suspension process. Indeed, if $(\Theta^t)_{t}$ were a suspension flow, it would admit a global section and a Poincaré return map whose every orbit is periodic while the periods are unbounded. This contradicts Montgomery's theorem \cite{montgomery}, which states that if a homeomorphism $f$ on a connected manifold has all orbits periodic, then $f$ itself is periodic: there exists $N\in\N$ with $f^N=\mathrm{Id}$.


\bibliographystyle{amsalpha}
\bibliography{high_emergence_for_periodic_flows}

\end{document}